\documentclass[12pt]{amsart}
\usepackage{amssymb}
\usepackage{amsmath}
\usepackage{enumitem}
\usepackage{fullpage}
\usepackage{xcolor}
\usepackage[unicode,hypertexnames=false]{hyperref}
\hypersetup{colorlinks=true,linkcolor=blue!60!black,citecolor=blue!60!black,urlcolor=blue!60!black}
\allowdisplaybreaks
\newtheorem{thm}{Theorem}[section]
\newtheorem{prop}[thm]{Proposition}
\newtheorem{cor}[thm]{Corollary}
\newtheorem{lem}[thm]{Lemma}
\newtheorem{rem}[thm]{Remark}
\newtheorem{defn}[thm]{Definition}

\newtheorem{conj}[thm]{Conjecture}
\newtheorem{Exa}[thm]{Example}
\newtheorem{conv}[thm]{Convention}

\numberwithin{equation}{section}

\newcommand{\C}{\mathbb{C}}
\newcommand{\N}{\mathbb{N}}
\newcommand{\Z}{\mathbb{Z}}
\newcommand{\res}{\mbox{\rm Res}}
\newcommand{\Res}{\mbox{\rm Res}}
\newcommand{\Span}{\mbox{\rm Span}}
\newcommand{\End}{\mbox{\rm End}}
\newcommand{\Hom}{\mbox{\rm Hom}}
\newcommand{\wt}{\mathrm{wt} \, }

\newcommand{\Id}{\mbox{\rm Id}}
\begin{document}
\title[Generators and relations for $A_n(V)$]
{On generators and relations for \(O_n^\circ(V)\) in higher-level Zhu
	algebras, Part I}

\author{Nathan Vander Werf}
\address{Department of Mathematics and Statistics, University of Nebraska at Kearney, Kearney, NE 68849}
\email{vanderwerfnp@unk.edu}
\urladdr{\href{https://orcid.org/0009-0007-7374-9345}{https://orcid.org/0009-0007-7374-9345}}

\subjclass[2020]{Primary 17B69; Secondary 17B68}

\date{October 12, 2026}

\keywords{Vertex operator algebras, higher-level Zhu algebras,
	circle products, strong generators, truncated vertex operators}

\begin{abstract}
	We study the circle-relation subspace $O_n^\circ(V)$ in the defining
	relations of the level-$n$ Zhu algebra of a vertex operator algebra $V$.
	We assemble the residue relations generating \(O_n^\circ(V)\) into
	generating functions and obtain an explicit recursion that replaces every
	sufficiently deep mode by retained modes with indices at least $-2n-1$,
	modulo \(O_n^\circ(V)\).  This recursion is encoded in a left truncated field
	\(Y_L\).  Since such mode replacements are valid only in the
	leftmost position of an operator product, we prove the replacement,
	derivative, commutator, and one-step iterate identities for \(Y_L\) needed
	for finite-length calculations.

	For a finite homogeneous strong generating set $S$, we define the
	finite-length circle subspaces $O_n^\circ(N,S,V)$ and their plus
	subspaces. We formulate a finite-length conjecture under an explicit
	auxiliary-degree hypothesis and a separate termwise conjecture for the
	correction residues of one-sided iterates.
	We prove reductions to the
	\(k=0\) strip and to last vacuum mode \(-1\), recover the length-one
	relation spaces, and give explicit Virasoro computations at word length
	two for \(n=1\) and \(n=2\).  The $Y_L$ identities are the input for the companion
	paper (Part~II).
\end{abstract}
\maketitle

\section{Introduction}

\subsection*{Zhu algebras and higher levels}
For a vertex operator algebra (VOA) $V$, Zhu introduced the associative algebra $A_0(V)$ in his proof
of modular invariance of characters \cite{Z2}; see also Zhu's thesis \cite{Z1}.
Dong--Li--Mason generalized Zhu's construction to a family of higher-level algebras $A_n(V)=V/O_n(V)$
and established functorial links between admissible $V$--modules and $A_n(V)$--modules \cite{DLM}.
These higher-level Zhu algebras are particularly useful in settings where indecomposable representation
theory is essential; for example, modular invariance phenomena persist under $C_2$--cofiniteness without
assuming semisimplicity of $A_0(V)$ \cite{MiyC2}.

Twisted and orbifold variants are developed in
\cite{DLMtwisted,DLMmoonshine} and \cite{MiyTan,vanEkeren}. Explicit higher-level
computations appear in \cite{BVY-Heisenberg,BVY-Virasoro,AB,AB2}.
Further descriptions in \cite{DGK,BFOHY} do not give explicit presentations
by generators and relations. Such presentations can be used to construct
and analyze indecomposable modules; the passage from $A_n(V)$-modules
to admissible $V$-modules requires care even at level one \cite{BVY}.
Huang \cite{Huang} constructed an associative algebra $A^\infty(V)$ acting on
associated graded spaces of lower-bounded generalized $V$-modules. He also showed that
Zhu's algebra and the algebras $A_n(V)$ are isomorphic to subalgebras of
$A^\infty(V)$ \cite[Theorems~4.1--4.2]{Huang}.  For the defining ideal of
$A^\infty(V)$, Addabbo \cite{Addabbo} showed that the families of elements proposed by Huang do
not generate it, already for the rank-one Heisenberg vertex operator algebra.

We derive residue and truncation identities for working modulo the
circle-relation subspace
\(O_n^\circ(V)\), whose relations constitute a central difficulty in
explicit presentations for \(n\ge1\).  The truncated field \(Y_L\)
organizes the resulting deep-mode recursions.
\subsection*{The circle-relation subspace}
Fix $n\in\N$.
The level $n$ Zhu algebra is defined by $A_n(V)=V/O_n(V)$ \cite{Z2,DLM}.
Its defining subspace decomposes as
\[
\begin{aligned}
O_n(V) &= O^L(V)+O_n^\circ(V),\\
O^L(V) &= \{(L(-1)+L(0))v\mid v\in V\},\qquad
O_n^\circ(V)=\Span\{u\circ_n v\mid u,v\in V\}.
\end{aligned}
\]
Following \cite{AB}, we keep the translation-relation subspace
\(O^L(V)\) separate from the circle-relation subspace
\(O_n^\circ(V)\).  The purpose of this paper is to isolate the circle
relations and derive identities for calculations modulo
\(O_n^\circ(V)\).  This separation is motivated by explicit
higher-level calculations: for \(n\ge1\), the circle relations impose
nontrivial deep-mode recursions, and those recursions must be controlled
inside iterates and normally ordered expressions.
\subsection*{Truncation, leftmost replacement, and the first quadratic obstruction}
The circle relation $u\circ_n v\in O_n^\circ(V)$ is a residue identity that replaces sufficiently deep modes of $u$ by retained modes with indices at least $-2n-1$, modulo $O_n^\circ(V)$.
However, the usual identities of vertex algebra calculus (iterate/Jacobi expansions, commutator expansions, and normal ordering; cf.\,\cite[Chs.~2--3]{LL}) can generate arbitrarily deep modes unless they are organized with care.
Deep-mode reductions are therefore applied only to the leftmost field of a product (the operator acting last); we call this \emph{leftmost replacement}.

To exploit the deep-mode recursion efficiently, we introduce for each homogeneous $u\in V$ a left truncated field $Y_L(u,x)$, which records the image of $Y(u,x)$ in $V/O_n^\circ(V)$ while replacing all sufficiently deep modes of $u$ by explicit retained modes with indices at least $-2n-1$.
More precisely, for homogeneous $u\in V$ we write
\[
Y_L(u,x)=\sum_{j\ge -2n-1} u_j\,p_{j,u}(x),
\]
where the coefficient series $p_{j,u}(x)$ are explicit and encode the band/tail splitting determined by the level $n$ circle relations.
The field $Y_L$ encodes the circle relations attached to $u$ (Lemma~\ref{lem:defect-spans-Onu}) and is designed to be compatible with leftmost replacement in iterated computations.

A key theme already visible in the first nontrivial length-$2$ setting is that truncation behaves differently for \emph{singular} iterates $u_i v$ ($i\ge0$) and \emph{regular} iterates $u_{-r}v$ ($r\ge1$).
The $Y_L$--formalism behaves well on the singular side, whereas expanding $Y_L(u_{-r}v,x)$ typically produces monomials whose mode indices are less than $-2n-1$ when expanded using the iterate formula.
This motivates separating the relations controlled by $Y_L$ (that is, those coming from the singular iterates $u_i v$ with $i\ge0$)
from the remaining regular quadratic relations (Remark~\ref{rem:YL-safe-vs-regular}) and leads to a quadratic reduction lemma (Lemma~\ref{lem:On2-cutoff-reduction}).

\subsection*{Main results}
Throughout this subsection, \(n\in\N\) is fixed and \(V\) is a vertex
operator algebra.  For a homogeneous vector \(u\in V\), write
\(O_n^\circ(u,V)\subseteq O_n^\circ(V)\) for the span of
\[
u\,R^n_{m,k}\,a,
\qquad
m\ge k\in\N,\quad a\in V,
\]
as in Definition~\ref{def:Onu-subspace}.  In the statements below,
\(u\) and \(v\) denote homogeneous elements of \(V\).
\smallskip
\noindent\textbf{General tail recursion.}
The first main result is that every deep mode $u_{-2n-2-m}v$ ($m\ge0$)
is congruent modulo $O_n^\circ(u,V)$ to a finite linear combination
of retained modes $u_j v$ with $j\ge -2n-1$ (Proposition~\ref{prop:tail-AB1}):
\begin{equation}\label{eq:tail-intro}
u_{-2n-2-m}v
\;\equiv_{O_n^\circ(u,V)}\;
\sum_{j\ge -2n-1}
\!\!\left(
\sum_{i=0}^{2n+j+1}
\binom{\wt u+n}{i}
\binom{-\wt u-n}{2n+2+m+j-i}
\right)
u_j v.
\end{equation}
The sum is finite on each input by lower truncation; its nonzero operator coefficients need not have a uniform upper mode bound.
This holds in full generality (all weight regimes) and in the case $\wt u + n\geq 0$, specializes to the recursion of \cite{AB}.
The coefficients are explicit binomial expressions in $\wt u+n$.
These relations are precisely what $Y_L(u,x)$ encodes: the coefficient
series $p_{j,u}(x)$ in $Y_L(u,x)=\sum_{j\ge-2n-1}u_j\,p_{j,u}(x)$ are
constructed so that \eqref{eq:tail-intro} holds for every $m\ge0$.
Conversely, after reducing to $k=0$, the residue generators $u\,R^n_{m,k}\,v$ defining $O_n^\circ(u,V)$
are recovered as the coefficients of $x^{m+2n+1}$ in $(1+x)^{\wt u+n}Y(u,x)v$ for $m\ge0$
(Proposition~\ref{prop:Y-full-GF}).

\smallskip
\noindent\textbf{One-step and iterated reductions for $Y_L$.}
The left truncated field $Y_L(u,x)$ packages \eqref{eq:tail-intro} into a
field-level congruence $Y(u,x)\equiv_{O_n^\circ(u,V)} Y_L(u,x)$.
For a regular iterate $u_{-r}v$ ($r\ge1$), the one-step reduction
(Lemma~\ref{lem:one-step-general}) expresses $Y(u_{-r}v,x)$ modulo
the comparison subspace
$U:=O_n^\circ(u,V)+O_n^\circ(v,V)+\sum_{i\ge0}O_n^\circ(u_iv,V)$
entirely in terms of $Y_L$:
\[
Y(u_{-r}v,x)
\;\equiv_{U}\;
\sum_{i\ge0} F^+_{i,r,u}(x,x)\,Y_L(u_iv,x)
\;+\;
Y_L(v,x)\,Y_L(u_{-r}\mathbf{1},x),
\]
where $F^+_{i,r,u}(x,x)$ is an explicit diagonal kernel.
An iterated version (Proposition~\ref{prop:one-sided-iterate}) extends this
to vacuum words of arbitrary length, expressing $Y(w,x)$ as a sum of
products of truncated fields $Y_L$ with explicit correction kernels.

\smallskip
\noindent\textbf{Bounded-length generation and conjectures.}
Given a finite homogeneous strong generating set $S\subset V$, let
$O_n^\circ(N,S,V)$ denote the subspace of $O_n^\circ(V)$ generated by
circle-type relations attached to vacuum words of length at most $N$
(Definition~\ref{def:OnN-SV}).
We prove that generators can always be taken with $k=0$ and last vacuum
mode $-1$ (Corollary~\ref{cor:OnNSV-first-reduction}), and we establish a
quadratic reduction in the case $N=2$
(Lemma~\ref{lem:On2-cutoff-reduction}).
The proposed word-length bound is $N=n+1$, under the auxiliary-degree condition
\eqref{eq:auxiliary-degree-drop}. The precise statement is
Conjecture~\ref{conj:finite-length-zhu-relation}. The correction-residue
assertion is stated separately as Conjecture~\ref{conj:finite-level-revised}.
The correction residues and the quadratic relations are studied in the
companion paper \cite{Part2}.

\subsection*{Organization}
Section~\ref{zhu-algebra-definition-section} isolates the circle-relation
subspaces and constructs the left truncated field \(Y_L\) from the residue
identities.  Section~\ref{sec:YL-calculus} proves the leftmost replacement,
derivative, commutator, and one-step identities for \(Y_L\) needed for iterated
computations.  Section~\ref{sec:finite-length} defines replacement residues and the
finite-length circle subspaces
\(O_n^\circ(N,S,V)\), states the conditional finite-length conjecture, and proves the first reduction lemmas.  Finally,
Section~\ref{sec:examples} gives length-one and Virasoro quadratic examples
and records the outlook toward finite quadratic generating windows.

\subsection*{Notation}
The following symbols recur throughout; the right column gives the place of definition.
\begin{center}
\renewcommand{\arraystretch}{1.15}
\begin{tabular}{lll}
Symbol & Meaning & Defined in\\ \hline
$u\,R^n_{m,k}\,v$ & residue operator & Def.~\ref{def:Rmk}, \eqref{eq:Rmk-def}\\
$O_n^\circ(u,V)$ & relations from one vector $u$ & Def.~\ref{def:Onu-subspace}\\
$\equiv_W$, $\sim_n$ & congruence modulo $W$, modulo $O_n^\circ(V)$ & Def.~\ref{def:W-congruence}\\
$C_M(u)$ & coefficient of $x^M$ in $(1+x)^{\wt u+n}Y(u,x)$ & \eqref{eq:CM-def}\\
$p_{j,u}(x)$, $P_{j,u}(x)$ & $p$-series and its tail & \eqref{eq:pj-def}, \eqref{eq:p-P-decomp}\\
$p_{j,r,u}(x)$, $P_{j,r,u}(x)$ & derivatives of the $p$-series & \eqref{eq:pjmu-def}, \eqref{eq:pm-Pm-decomp}\\
$Y_L(u,x)$ & left truncated field & Def.~\ref{def:YL}\\
$F^\pm(x)$, $Y^\pm(u,x)$ & regular and singular parts & Def.~\ref{plus-minus-split}\\
$\Delta(u,x)$ & truncation difference $Y_L(u,x)-Y(u,x)$ & \eqref{eq:Delta-def}\\
$F^+_{i,u}$, $F^+_{i,r,u}$ & regular commutator kernels & \eqref{eq:Fiu-plus-def}, \eqref{eq:Fium-def}\\
$Q_{i,r,u}(x)$ & polynomial part of $F^+_{i,r,u}(x,x)$ & \eqref{eq:Qium-def}\\
$Y_L(u_{-r}\mathbf1,x)$ in products & derivative field & Conv.~\ref{conv:descendant-derivative}\\
$v\,R^{n,\widehat Y}_{m,k}\,w$ & residue of a replacement field & Def.~\ref{def:Rmk-replacement}\\
$O_n^\circ(N,S,V)$, $O_n^\circ(N,S,V)^+$ & finite-length subspaces & Def.~\ref{def:OnN-SV}\\
$\mathcal U_\ell^{(0)}$, $\mathcal U_\ell$ & moduli of the iterated expansion & Prop.~\ref{prop:one-sided-iterate}\\
\end{tabular}
\end{center}
\section{The relations in \texorpdfstring{$O^\circ_n(V)$}{On(V)} and the field \texorpdfstring{$Y_L$}{YL}}\label{zhu-algebra-definition-section}
\subsection{Definitions and the residue relations \texorpdfstring{$R^n_{m,k}$}{Rnm,k}}
In this section, letting $V$ be a vertex operator algebra, we recall the definition and some properties of the algebras $A_n(V)$ for $n \in \N$, first introduced in \cite{Z2} for $n = 0$, and then generalized to $n >0$ in \cite{DLM}.
Throughout, we take $\N = \{0,1,2,\dots\}$, and for  $\alpha\in\C$ and $i\in\N$ we use the generalized binomial
coefficients
\begin{equation}\label{eq:binom-coeff}
\binom{\alpha}{i}
=
\frac{\alpha(\alpha-1)\cdots(\alpha-i+1)}{i!},
\qquad i\ge 0,
\end{equation}
with the convention that $\binom{\alpha}{0}=1$ and $\binom{\alpha}{k}=0$ for $k<0$.

The series $(x+y)^\alpha$ is interpreted as a formal power series expanded in nonnegative
powers of $y$:
\begin{equation}\label{eq:binomial-expansion}
(x+y)^\alpha=\sum_{i\ge 0}\binom{\alpha}{i}x^{\alpha-i}y^i,
\end{equation}
where $\binom{\alpha}{i}$ is the generalized binomial coefficient \eqref{eq:binom-coeff}. We write coefficient extraction by $\bigl[x^M\bigr]F(x)=\Res_x\,x^{-M-1}F(x)$ for $F(x)\in\C((x))$ and $M\in\Z$.
\smallskip

 For $n \in \N$, let $O_n(V)$ be the subspace of $V$ spanned by elements of the form
\begin{equation}\label{define-circ}
u \circ_n v =
\res_x \frac{(1 + x)^{\mathrm{wt}\, u + n}Y(u, x)v}{x^{2n+2}}
\end{equation}
for all homogeneous $u \in V$ and for all $v \in V$, and by elements of the form $(L(-1) + L(0))v$ for all $v \in V$. The vector space $A_n(V)$ is defined to be the quotient space $V/O_n(V)$.

That is, using the notation introduced in \cite{AB},  we define
\begin{equation}\label{define-O-subsets}
O^L (V) = \{ (L(-1) + L(0))v \, | \, v \in V \} \quad
\mathrm{and}
\quad O_n^\circ(V) = \mathrm{span} \{ u \circ_n v \; | \; u,v \in V\} ,
\end{equation}
then
\begin{equation}\label{define-O}
O_n(V) =  O^L (V) + O_n^\circ(V) .
\end{equation}
We will sometimes refer to $\circ_n$ as the ``circle product''.

\begin{defn}[Congruence modulo a subspace]\label{def:W-congruence}
	Let $V$ be a vector space, and let $W\subset V$ be a subspace.  Let $V((x))$ denote the vector space of Laurent series in $x$ with coefficients in $V$.

	(1) For $a,b\in V$ we write $a\equiv_W b$ if $a-b\in W$.

	(2) For $A,B\in V((x_1))\cdots((x_r))$ we write $A\equiv_W B$ if every coefficient of
	$A-B$ lies in $W$, i.e. $A-B\in W((x_1))\cdots((x_r))$.

	(3) Let $F,G$ be linear maps from $V$ to $V((x_1))\cdots((x_r))$.
	We write $F\equiv_W G$ if
	$(F-G)v\in W((x_1))\cdots((x_r))$ for all $v\in V$; equivalently
	$F-G\in\Hom\!\bigl(V,W((x_1))\cdots((x_r))\bigr)$.
	The Laurent lower bounds may depend on the input vector.
	In particular, for ordinary operators $A,B\in\End(V)$,
	\[
	A\equiv_W B
	\quad\Longleftrightarrow\quad
	(A-B)(V)\subseteq W.
	\]
	Equivalently, $A$ and $B$ induce the same linear map from $V$ to $V/W$.
	Throughout, when $W=O_n^\circ(V)$ we also write $\sim_n$ in place of $\equiv_W$. In particular, these congruences are preserved under coefficient extraction and formal residues
	in the variables $x_1,\dots,x_r$, and under multiplication by scalars in $\C((x_1))\cdots((x_r))$.
\end{defn}

For homogeneous $u\in V$ and any $v\in V$, define
\[
u *_n v
=\sum_{m=0}^n (-1)^m \binom{m+n}{n}\,\Res_x
\frac{(1+x)^{\wt u+n}Y(u,x)v}{x^{n+m+1}}.
\]
Extend $*_n$ to all of $V$ by linearity in $u$.
It is shown in \cite{DLM} that, with this multiplication, $O_n(V)$ is a two-sided ideal of $V$ and
$A_n(V):=V/O_n(V)$
is an associative algebra, called the \emph{level $n$ Zhu algebra}.
\begin{defn}[Residue operators]\label{def:Rmk}
	Let $u\in V$ be homogeneous, $n\in\N$, and define
	\begin{equation}\label{eq:Rmk-def}
	u\,R^{n}_{m,k}\,v
	\;:=\;
	\Res_x \frac{(1+x)^{\wt u+n+k}}{x^{m+2n+2}}\,Y(u,x)v.
	\end{equation}
		In particular, the case $(m,k)=(0,0)$ recovers the original circle product:
	\[
	u\,R^{n}_{0,0}\,v \;=\; u\circ_n v
	\qquad\text{by }\eqref{eq:Rmk-def}\text{ and }\eqref{define-circ}.\]
\end{defn}
\smallskip
\noindent
The residue operators $R^n_{m,k}$ provide a uniform way to package circle-type relations:
for fixed $u$ and $v$, the elements $uR^n_{m,k}v$ lie in $O_n^\circ(V)$ for $m\ge k\ge0$ (cf.\ \cite{AB}),
and we later apply the same residues to replacement fields $Y(w,x)\equiv_U \widehat Y(w,x)$
(Definition~\ref{def:Rmk-replacement}) to produce further relations from longer words.

The following lemma is a slight refinement of a result in \cite{AB}.
\begin{lem}[Residues $uR^n_{m,k}v$ lie in $O_n^\circ(V)$]\label{Zhu-lemma}
	Let $u\in V$ be homogeneous and let $m,k\in\N$ with $m\ge k\ge 0$.
	Then for all $v\in V$, the element $u\,R^{n}_{m,k}\,v$ defined in
	\eqref{eq:Rmk-def} satisfies
	\begin{equation}\label{eq:Rmk-in-Oncirc}
	u\,R^{n}_{m,k}\,v \in O_n^\circ(V).
	\end{equation}
	Moreover, for each fixed homogeneous $u\in V$, the subspace of $O_n^\circ(V)$
	spanned by the elements $u\,R^{n}_{m,k}\,v$ (with $v\in V$ and $m\ge k\ge 0$)
	is generated by the elements with $k=0$, i.e. by
	\[
	u\,R^{n}_{m',0}\,v,
	\qquad v\in V,\; m'\in\N.
	\]
	In fact, for all $v\in V$ and $m\ge k\ge 0$ one has
	\[
	u\,R^{n}_{m,k}\,v
	=
	\sum_{t=0}^{k}\binom{k}{t}\,u\,R^{n}_{m-t,0}\,v.
	\]
\end{lem}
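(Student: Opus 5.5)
The plan is to first prove the explicit identity in the last display, which is purely formal, and then use it to reduce everything to the $k=0$ case. Finally, I will show $u\,R^n_{m,0}\,v\in O_n^\circ(V)$ for all $m\in\N$ by induction on $m$, using the $L(-1)$-derivative property. The base case $m=0$ is just $u\circ_n v$.

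\emph{Step 1 (the binomial identity).} Write $(1+x)^{\wt u+n+k}=(1+x)^{\wt u+n}(1+x)^k$. Since $k\in\N$, $(1+x)^k=\sum_{t=0}^k\binom{k}{t}x^t$ is a polynomial. Substituting into \eqref{eq:Rmk-def} and using $x^t/x^{m+2n+2}=1/x^{(m-t)+2n+2}$ gives
\[
u\,R^n_{m,k}\,v=\sum_{t=0}^k\binom{k}{t}\,u\,R^n_{m-t,0}\,v .
\]
The hypothesis $m\ge k$ guarantees $m-t\ge0$ for every $t$, so each term is a legitimate $R^n_{m',0}$ with $m'\in\N$. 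This already gives the generation statement. Membership in $O_n^\circ(V)$ then follows once every $u\,R^n_{m',0}\,v$ lies in $O_n^\circ(V)$.

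\emph{Step 2 (induction on $m$ for $k=0$).} Put $a=\wt u+n$ and $N=m+2n+2$. The key tool is $Y(L(-1)u,x)=\frac{d}{dx}Y(u,x)$, together with $\wt(L(-1)u)=\wt u+1$. The latter means that $(L(-1)u)\,R^n_{m,0}\,v=\Res_x (1+x)^{a+1}x^{-N}\frac{d}{dx}Y(u,x)v$. Since the residue of a derivative vanishes, integration by parts gives
\[
(L(-1)u)\,R^n_{m,0}\,v=-(a+1)\,u\,R^n_{m,0}\,v+N\,u\,R^n_{m+1,1}\,v .
\]
By Step 1, $u\,R^n_{m+1,1}v=u\,R^n_{m+1,0}v+u\,R^n_{m,0}v$. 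Solving for the new term yields
\[
u\,R^n_{m+1,0}\,v=\tfrac1N\Bigl((L(-1)u)\,R^n_{m,0}\,v+(a+1-N)\,u\,R^n_{m,0}\,v\Bigr),
\]
which is valid because $N\ge2$. The induction hypothesis is to be taken uniformly over all homogeneous $u$, so that it applies to the homogeneous vector $L(-1)u$ as well as to $u$. Under that hypothesis both terms on the right lie in $O_n^\circ(V)$, and the induction closes.

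The main point requiring care is Step 2, specifically the bookkeeping of exponents. The exponent attached to $L(-1)u$ is $a+1$, while the derivative of $(1+x)^{a+1}$ produces exponent $a$, which is exactly the exponent attached to $u$. It is this matching that keeps every term within the family $R^n_{\cdot,0}$ or $R^n_{\cdot,1}$ (after Step 1), rather than producing relations outside $O_n^\circ(V)$. No $(L(-1)+L(0))$-relations enter the argument, which is why the conclusion holds in $O_n^\circ(V)$ and not merely in $O_n(V)$. Linearity in $v$ handles nonhomogeneous $v$.
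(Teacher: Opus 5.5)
Your proof is correct. Step~1 is exactly the paper's argument for the explicit identity and for the $k=0$ generation statement.

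The two routes differ on the membership claim \eqref{eq:Rmk-in-Oncirc}. The paper does not prove it; it cites \cite{AB}. You instead give a self-contained induction on $m$, uniform over homogeneous vectors, by integrating by parts against $Y(L(-1)u,x)=\frac{d}{dx}Y(u,x)$.

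Your intermediate identity $(L(-1)u)R^n_{m,0}v=-(a+1)\,uR^n_{m,0}v+N\,uR^n_{m+1,1}v$ is the residue identity the paper records later in Lemma~\ref{lem:Onu-Lminus1-inclusion}. Unrolling your recursion writes $uR^n_{m,0}v$ as a combination of the circle products $(L(-1)^ju)\circ_n v$ with $0\le j\le m$. That is the closed form \eqref{eq:Rk0-in-circle} of Corollary~\ref{lem:circle-Rk0-change-of-basis}. For instance, at $m=0$ your formula gives $uR^n_{1,0}v=\frac{1}{2n+2}\bigl((L(-1)u)\circ_n v+(\wt u-n-1)\,u\circ_n v\bigr)$, which agrees with it. So your argument is a recursive version of what the paper establishes later, via triangularity in Lemma~\ref{lem:circle-from-modes} and explicitly in \eqref{eq:Rk0-in-circle}.

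Your route makes the lemma independent of the external reference. It also exhibits directly that only circle products of $L(-1)$-descendants of $u$ are needed, so no $O^L(V)$ relations enter and the conclusion holds in $O_n^\circ(V)$. The paper's route is shorter at this point but defers the key inclusion to \cite{AB}.
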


\begin{proof}
	The inclusion \eqref{eq:Rmk-in-Oncirc} was proved in \cite{AB}.
	It remains to show that, for fixed homogeneous $u\in V$, the subspace spanned
	by the elements $u\,R^{n}_{m,k}\,v$ is generated by those with $k=0$.

	\smallskip
	Fix $u\in V$ homogeneous and $m\ge k\ge 0$. Using the binomial expansion in \eqref{eq:binomial-expansion} and substituting it into the defining residue for $u\,R^{n}_{m,k}\,v$ gives
	\begin{align}
	u\,R^{n}_{m,k}\,v
	&=
	\Res_x \frac{(1+x)^{\wt u+n+k}}{x^{m+2n+2}}\,Y(u,x)v \notag\\
	&=
	\sum_{t=0}^{k}\binom{k}{t}\,
	\Res_x \frac{x^t(1+x)^{\wt u+n}}{x^{m+2n+2}}\,Y(u,x)v \notag\\
	&=
	\sum_{t=0}^{k}\binom{k}{t}\,
	\Res_x \frac{(1+x)^{\wt u+n}}{x^{(m-t)+2n+2}}\,Y(u,x)v. \notag
	\end{align}
	For each $t$ with $0\le t\le k$, set $m' = m-t$. Since $m\ge k$, we have
	$m'\ge m-k\ge 0$, hence $m'\in\N$. Therefore each summand is exactly
	\[
	u\,R^{n}_{m',0}\,v
	=
	\Res_x \frac{(1+x)^{\wt u+n}}{x^{m'+2n+2}}\,Y(u,x)v.
	\]
	Thus every $u\,R^{n}_{m,k}\,v$ (with $m\ge k$) is a $\C$-linear combination of
	elements $u\,R^{n}_{m',0}\,v$, proving the second claim.
\end{proof}

\subsection{The subspace \texorpdfstring{$O_n^{\circ}(u,V)\subseteq O_n^{\circ}(V)$}{On(u,V) in On(V)}}

\begin{defn}\label{def:Onu-subspace}
	Fix a homogeneous element $u\in V$ and let $n\in\N$.
	We define $O_n^\circ(u,V)$ to be the subspace of $O_n^\circ(V)$ spanned by the elements
	\[
	u\,R^{n}_{m,k}\,v \qquad (v\in V,\; m,k\in\N,\; m\ge k),
	\]
	where $u\,R^{n}_{m,k}\,v$ is as defined in \eqref{eq:Rmk-def}.
\end{defn}

\begin{lem}[Circle products of vacuum descendants span $O_n^\circ(u,V)$]\label{lem:circle-from-modes}
	Fix a homogeneous element $u\in V$ and let $n\in\N$.
	Then the subspace $O_n^\circ(u,V)$ of $O_n^\circ(V)$ defined in Definition~\ref{def:Onu-subspace}
	is generated precisely by the circle products
	\[
	(u_{-r}\mathbf{1})\circ_n v,
	\qquad v\in V,\; r\ge 1.
	\]
\end{lem}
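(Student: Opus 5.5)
The plan is to compute each circle product $(u_{-r}\mathbf 1)\circ_n v$ explicitly in terms of the residue operators $u\,R^n_{m,k}\,v$. The resulting expansion should be unitriangular up to nonzero scalars, and the equality of spans then follows from Lemma~\ref{Zhu-lemma}. Throughout, set $N:=\wt u+n$.

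\emph{Step 1 (the field of a vacuum descendant).} For $r\ge1$ we have $u_{-r}\mathbf 1=\frac{1}{(r-1)!}L(-1)^{r-1}u$. This vector is homogeneous of weight $\wt u+r-1$, and by the $L(-1)$-derivative property
\[
Y(u_{-r}\mathbf 1,x)=\frac{1}{(r-1)!}\,\partial_x^{r-1}Y(u,x).
\]
Substituting into \eqref{define-circ} gives
\[
(u_{-r}\mathbf 1)\circ_n v=\frac{1}{(r-1)!}\Res_x\,(1+x)^{N+r-1}x^{-2n-2}\,\partial_x^{r-1}Y(u,x)v .
\]

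\emph{Step 2 (integration by parts).} Residues of total derivatives vanish, so we may move the $r-1$ derivatives onto the scalar factor, picking up a sign $(-1)^{r-1}$. Expand $\partial_x^{r-1}\bigl[(1+x)^{N+r-1}x^{-2n-2}\bigr]$ by the Leibniz rule. Suppose $a$ derivatives fall on $(1+x)^{N+r-1}$ and $k:=r-1-a$ fall on $x^{-2n-2}$. The resulting term is a scalar multiple of $(1+x)^{N+k}x^{-(k+2n+2)}$, so its residue against $Y(u,x)v$ is a multiple of $u\,R^n_{k,k}\,v$ with $0\le k\le r-1$. Hence
\[
(u_{-r}\mathbf 1)\circ_n v=\sum_{k=0}^{r-1}c_{r,k}\,u\,R^n_{k,k}\,v .
\]
The top coefficient comes from $a=0$:
\[
c_{r,r-1}=\frac{(-1)^{r-1}}{(r-1)!}\,(-2n-2)(-2n-3)\cdots(-2n-r)=\binom{2n+r}{r-1}\neq0 .
\]
This already gives the inclusion of the span of the circle products into $O_n^\circ(u,V)$. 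By induction on $r$, inverting the triangular system, it also shows that every $u\,R^n_{k,k}\,v$ lies in the span of the $(u_{-r}\mathbf 1)\circ_n v$ with $r\le k+1$.

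\emph{Step 3 (from $R_{k,k}$ to $R_{m,0}$).} Lemma~\ref{Zhu-lemma} gives
\[
u\,R^n_{k,k}\,v=\sum_{t=0}^{k}\binom{k}{t}\,u\,R^n_{k-t,0}\,v .
\]
This system is again triangular, now with leading coefficient $1$ on $u\,R^n_{k,0}\,v$. By induction on $k$, the span of $\{u\,R^n_{k,k}\,v\}$ therefore equals the span of $\{u\,R^n_{m,0}\,v\}$. By the second part of Lemma~\ref{Zhu-lemma}, the latter span is all of $O_n^\circ(u,V)$, which proves the lemma.

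The only delicate point is the bookkeeping in Step~2. One must check that every Leibniz term has matching shift indices $m=k$; this holds because each derivative on $x^{-2n-2}$ raises the pole order by one, and each derivative on $(1+x)^{N+r-1}$ lowers its exponent by one, so $N+(r-1-a)=N+k$. One must also check that the leading coefficient $c_{r,r-1}$ is nonzero. Both checks are routine.
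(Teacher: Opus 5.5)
Your proposal is correct and follows the paper's proof. Both write $Y(u_{-r}\mathbf1,x)$ as $\frac{1}{(r-1)!}\partial_x^{r-1}Y(u,x)$, integrate by parts, and expand by Leibniz to express $(u_{-r}\mathbf1)\circ_n v$ through $u\,R^n_{k,k}\,v$ for $0\le k\le r-1$, with top coefficient $\binom{2n+r}{r-1}$, and both finish with Lemma~\ref{Zhu-lemma} and triangularity. The only difference is cosmetic: you invert two triangular systems in turn, whereas the paper composes them into a single triangular system in the $k=0$ generators $u\,R^n_{m,0}\,v$.
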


\begin{proof}
By Lemma~\ref{Zhu-lemma}, $O_n^\circ(u,V)$ is spanned by
\begin{equation}\label{eq:k0-residues-u}
	u\,R^{n}_{r,0}\,v
	\qquad (v\in V,\; r\in\N).
	\end{equation}
Fix $m\ge1$ and $v\in V$; note $\wt(u_{-m}\mathbf1)=\wt u+m-1$. The creation and derivative properties give
$Y(u_{-m}\mathbf1,x)=\frac{1}{(m-1)!}\partial_x^{m-1}Y(u,x)$.
With $F(x)=(1+x)^{\wt u+n+m-1}x^{-2n-2}$, formal integration by parts gives
\begin{equation}\label{eq:circle-Fm-1}
	(u_{-m}\mathbf{1})\circ_n v
	=
	\frac{(-1)^{m-1}}{(m-1)!}\res_x F^{(m-1)}(x)\,Y(u,x)v.
	\end{equation}
The Leibniz rule, together with
$\partial_x^k x^{-2n-2}=(-1)^k k!\binom{2n+k+1}{k}x^{-2n-2-k}$, yields
\begin{equation}\label{eq:F-derivs-closed}
	F^{(m-1)}(x)
	=
	(m-1)!\sum_{k=0}^{m-1}
	(-1)^k
	\binom{\wt u+n+m-1}{m-1-k}
	\binom{2n+k+1}{k}\,
	(1+x)^{\wt u+n+k}x^{-2n-2-k}.
	\end{equation}
Substituting into \eqref{eq:circle-Fm-1} and using \eqref{eq:Rmk-def} gives
\begin{equation}\label{eq:circle-as-residues}
	(u_{-m}\mathbf{1})\circ_n v
	=
	\sum_{k=0}^{m-1}
	(-1)^{m-1+k}
	\binom{\wt u+n+m-1}{m-1-k}
	\binom{2n+k+1}{k}\,
	\bigl(u\,R^{n}_{k,k}\,v\bigr).
	\end{equation}
Lemma~\ref{Zhu-lemma} now puts every circle product in the span of
\eqref{eq:k0-residues-u}. Conversely,
\[
uR^n_{k,k}v=uR^n_{k,0}v+\sum_{t=1}^k\binom{k}{t}uR^n_{k-t,0}v.
\]
Thus \eqref{eq:circle-as-residues} is triangular in the $k=0$ generators,
with nonzero coefficient $\binom{2n+m}{m-1}$ at $uR^n_{m-1,0}v$.
The base case is $(u_{-1}\mathbf1)\circ_n v=uR^n_{0,0}v$; induction on $m$
gives the reverse inclusion.
\end{proof}

\begin{lem}[Change of generators for $R^{n}_{m,k}$ from $(u_{-r}\mathbf1)$ to $u$]
	\label{lem:ur1-Rmk-change-of-basis}
	Let $u\in V$ be homogeneous and let $v\in V$. Fix $n\in\N$.
	Let $m,k,r\in\N$ with $r\ge 1$.
	Then
	\begin{equation}\label{eq:ur1-Rmk-in-u-Rmk}
	\begin{aligned}
	(u_{-r}\mathbf1)\,R^{n}_{m,k}v
	&=
	\sum_{m'=m}^{m+r-1}
	\Biggl(
	\sum_{a=0}^{r-1-(m'-m)}
	(-1)^a\,
	\binom{\wt u+n+k+r-1}{a}
	\\
	&\qquad\qquad\cdot
	\binom{m+2n+r-a}{r-1-a}\,
	\binom{r-1-a}{m'-m}
	\Biggr)
	\bigl(u\,R^{n}_{m',k}v\bigr).
	\end{aligned}
	\end{equation}
\end{lem}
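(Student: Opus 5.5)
The plan is to run the integration-by-parts argument from the proof of Lemma~\ref{lem:circle-from-modes}, now with a general exponent and a general pole order. After that, the stated coefficients should come out from rewriting a single factor of $(1+x)$ binomially. Throughout, set $\alpha=\wt u+n+k$, $N=m+2n+2$ and $b=r-1-a$.

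\emph{Step 1: rewrite the left side as a residue against $Y(u,x)$.} Since $\wt(u_{-r}\mathbf1)=\wt u+r-1$, \eqref{eq:Rmk-def} gives
\[
(u_{-r}\mathbf1)\,R^n_{m,k}v=\Res_x (1+x)^{\alpha+r-1}x^{-N}\,Y(u_{-r}\mathbf1,x)v .
\]
We then substitute $Y(u_{-r}\mathbf1,x)=\frac{1}{(r-1)!}\partial_x^{r-1}Y(u,x)$. Set $G(x)=(1+x)^{\alpha+r-1}x^{-N}$. Formal integration by parts is valid because $\Res_x$ of a total derivative vanishes. Applying it $r-1$ times yields
\[
(u_{-r}\mathbf1)\,R^n_{m,k}v=\frac{(-1)^{r-1}}{(r-1)!}\Res_x G^{(r-1)}(x)\,Y(u,x)v .
\]

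\emph{Step 2: expand $G^{(r-1)}$ by the Leibniz rule.} We have
\[
G^{(r-1)}=\sum_{a=0}^{r-1}\binom{r-1}{a}\,a!\binom{\alpha+r-1}{a}(1+x)^{\alpha+r-1-a}\,\partial_x^{b}x^{-N}.
\]
For the second factor we use $\partial_x^{b}x^{-N}=(-1)^{b}\,b!\binom{N+b-1}{b}x^{-N-b}$, exactly as in \eqref{eq:F-derivs-closed}. The combinatorial prefactors simplify as
\[
\frac{1}{(r-1)!}\binom{r-1}{a}\,a!\,b!=1 .
\]
The total sign is $(-1)^{r-1+b}=(-1)^a$. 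Finally, $\binom{N+b-1}{b}=\binom{m+2n+r-a}{r-1-a}$. These three factors are precisely the first three factors in \eqref{eq:ur1-Rmk-in-u-Rmk}.

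\emph{Step 3: return to the standard residues $uR^n_{m',k}v$.} We write $(1+x)^{\alpha+r-1-a}=(1+x)^{\alpha}(1+x)^{b}$ and expand the polynomial factor as $(1+x)^b=\sum_{j=0}^b\binom{b}{j}x^j$. The term with index $j$ is then
\[
\Res_x (1+x)^{\alpha}x^{-(m+b-j)-2n-2}Y(u,x)v=u\,R^n_{m+b-j,k}\,v .
\]
Now put $m'=m+b-j$, so that $\binom{b}{j}=\binom{b}{b-j}=\binom{r-1-a}{m'-m}$, which is the last factor in the statement. The constraints $0\le j\le b$ become $m\le m'\le m+b$. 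Exchanging the order of summation then gives $m'$ running from $m$ to $m+r-1$, with inner range $0\le a\le r-1-(m'-m)$, as in \eqref{eq:ur1-Rmk-in-u-Rmk}.

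There is no real obstacle here; the main care point is bookkeeping. One must track the sign $(-1)^{r-1}(-1)^b$ correctly. One must also check that splitting off $(1+x)^{\alpha}$ keeps the exponent $\wt u+n+k$ demanded by $R^n_{m',k}$ with the same $k$, which it does. Finally, since all sums are finite, the reindexing of the double sum is legitimate without any convergence issue.
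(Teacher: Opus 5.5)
Your proposal is correct and follows the paper's proof step by step. The paper likewise integrates by parts against $\frac{1}{(r-1)!}\partial_x^{r-1}Y(u,x)$, expands by Leibniz with total sign $(-1)^{r-1}(-1)^{r-1-a}=(-1)^a$, splits off $(1+x)^{\wt u+n+k}$, and reindexes by $m'=m+(r-1-a)-t$. The only cosmetic difference is that you write $\partial_x^{b}x^{-N}$ directly with the positive binomial $\binom{N+b-1}{b}$, whereas the paper first obtains $\binom{-(m+2n+2)}{r-1-a}$ and then converts it.
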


\begin{proof}
	Recall the defining residue (with general $k$):
	\begin{equation}\label{eq:Rmk-def-for-change-of-basis}
	u\,R^{n}_{m,k}v
	=
	\Res_x \frac{(1+x)^{\wt u+n+k}}{x^{m+2n+2}}\,Y(u,x)v.
	\end{equation}

	Since $r\ge 1$, the creation property gives
	\begin{equation}\notag
	u_{-r}\mathbf1=\frac{1}{(r-1)!}\,L(-1)^{r-1}u,
	\end{equation}
	so using $Y(L(-1)w,x)=\frac{d}{dx}Y(w,x)$ repeatedly,
	\begin{equation}\notag
	Y(u_{-r}\mathbf1,x)=\frac{1}{(r-1)!}\,\frac{d^{\,r-1}}{dx^{r-1}}Y(u,x).
	\end{equation}
	Also $\wt(u_{-r}\mathbf1)=\wt u+r-1$. Thus, by \eqref{eq:Rmk-def-for-change-of-basis},
	\begin{align}
	(u_{-r}\mathbf1)\,R^{n}_{m,k}v
	&=
	\Res_x \frac{(1+x)^{\wt(u_{-r}\mathbf1)+n+k}}{x^{m+2n+2}}\,Y(u_{-r}\mathbf1,x)v
	\notag\\
	&=
	\Res_x \frac{(1+x)^{\wt u+n+k+r-1}}{x^{m+2n+2}}\,Y(u_{-r}\mathbf1,x)v
	\notag\\
	&=
	\frac{1}{(r-1)!}\Res_x \frac{(1+x)^{\wt u+n+k+r-1}}{x^{m+2n+2}}\,
	\frac{d^{\,r-1}}{dx^{r-1}}Y(u,x)v.
	\notag
	\end{align}

	We now move the derivative off $Y(u,x)v$ by iterated residue integration-by-parts:
	iterating $\Res_x \frac{d}{dx}(\cdots)=0$ yields
	\begin{equation}\label{eq:ur1Rmk-ibp}
	(u_{-r}\mathbf1)\,R^{n}_{m,k}v
	=
	\frac{(-1)^{r-1}}{(r-1)!}\Res_x
	\frac{d^{\,r-1}}{dx^{r-1}}
	\Bigl(\frac{(1+x)^{\wt u+n+k+r-1}}{x^{m+2n+2}}\Bigr)\,
	Y(u,x)v.
	\end{equation}

	Compute the $(r-1)$-st derivative using Leibniz:
	\begin{align*}
	\frac{d^{\,r-1}}{dx^{r-1}}
	\Bigl((1+x)^{\wt u+n+k+r-1}\,x^{-(m+2n+2)}\Bigr)
	&=
	\sum_{a=0}^{r-1}\binom{r-1}{a}\,
	\frac{d^a}{dx^a}(1+x)^{\wt u+n+k+r-1}
	\notag\\
	&\qquad\cdot
	\frac{d^{\,r-1-a}}{dx^{r-1-a}}x^{-(m+2n+2)}.
	\end{align*}

	For $a\ge 0$,
	\[
	\frac{d^a}{dx^a}(1+x)^{\wt u+n+k+r-1}
	=
	a!\binom{\wt u+n+k+r-1}{a}\,(1+x)^{\wt u+n+k+r-1-a},
	\]
	and for $s\ge 0$,
	\[
	\frac{d^{\,s}}{dx^{s}}x^{-(m+2n+2)}
	=
	s!\binom{-(m+2n+2)}{s}\,x^{-(m+2n+2)-s}.
	\]
	Substitute these into the Leibniz expansion above with $s=r-1-a$.
	Using $\binom{r-1}{a}a!(r-1-a)!=(r-1)!$, the factorial cancels against
	\eqref{eq:ur1Rmk-ibp}, giving
	\begin{align}
	(u_{-r}\mathbf1)\,R^{n}_{m,k}v
	&=
	\sum_{a=0}^{r-1}
	(-1)^{r-1}\,
	\binom{\wt u+n+k+r-1}{a}\,
	\binom{-(m+2n+2)}{r-1-a}
	\notag\\
	&\qquad\cdot
	\Res_x (1+x)^{\wt u+n+k+r-1-a}\,x^{-(m+2n+r+1-a)}\,Y(u,x)v.
	\label{eq:ur1Rmk-after-deriv}
	\end{align}

	Convert the negative binomial coefficient:
	\[
	\binom{-(m+2n+2)}{r-1-a}
	=
	(-1)^{r-1-a}\binom{m+2n+r-a}{r-1-a}.
	\]
	The sign satisfies $(-1)^{r-1}(-1)^{r-1-a}=(-1)^a$, so \eqref{eq:ur1Rmk-after-deriv} becomes
	\begin{align}
	(u_{-r}\mathbf1)\,R^{n}_{m,k}v
	&=
	\sum_{a=0}^{r-1}
	(-1)^a\,
	\binom{\wt u+n+k+r-1}{a}\,
	\binom{m+2n+r-a}{r-1-a}
	\notag\\
	&\qquad\cdot
	\Res_x (1+x)^{\wt u+n+k+r-1-a}\,x^{-(m+2n+r+1-a)}\,Y(u,x)v.
	\label{eq:ur1Rmk-after-neg-bin}
	\end{align}

	Factor
	\[
	(1+x)^{\wt u+n+k+r-1-a}=(1+x)^{\wt u+n+k}\,(1+x)^{r-1-a},
	\]
	and expand
	\[
	(1+x)^{r-1-a}=\sum_{t=0}^{r-1-a}\binom{r-1-a}{t}x^t.
	\]
	Substituting into \eqref{eq:ur1Rmk-after-neg-bin} gives
	\begin{align}
	(u_{-r}\mathbf1)\,R^{n}_{m,k}v
	&=
	\sum_{a=0}^{r-1}
	(-1)^a\,
	\binom{\wt u+n+k+r-1}{a}\,
	\binom{m+2n+r-a}{r-1-a}
	\notag\\
	&\qquad\cdot
	\sum_{t=0}^{r-1-a}\binom{r-1-a}{t}\,
	\Res_x
	\frac{(1+x)^{\wt u+n+k}}{x^{m+2n+2+(r-1-a)-t}}\,Y(u,x)v.
	\label{eq:ur1Rmk-expanded-tail}
	\end{align}

	By \eqref{eq:Rmk-def-for-change-of-basis}, the residue in \eqref{eq:ur1Rmk-expanded-tail} equals
	$u\,R^{n}_{m+(r-1-a)-t,k}v$.
	Set
	\[
	m':=m+(r-1-a)-t.
	\]
	Then $m\le m'\le m+r-1$, and $t=r-1-a-(m'-m)$.
	For fixed $m'$, the condition $0\le t\le r-1-a$ becomes
	\[
	0\le r-1-a-(m'-m)\le r-1-a
	\qquad\Longleftrightarrow\qquad
	0\le a\le r-1-(m'-m).
	\]
	Also
	\[
	\binom{r-1-a}{t}=\binom{r-1-a}{r-1-a-(m'-m)}=\binom{r-1-a}{m'-m}.
	\]
	Collecting the coefficient of $u\,R^{n}_{m',k}v$ yields exactly \eqref{eq:ur1-Rmk-in-u-Rmk}.
\end{proof}

\begin{cor}[Change of generators for $\circ_n$]\label{lem:circle-Rk0-change-of-basis}
	Let $u\in V$ be homogeneous, let $n\in\N$, and let $v\in V$.
	Then for every $\ell\in\N$ with $\ell\ge 1$, we have
	\begin{equation}\label{eq:circle-in-Rk0}
	(u_{-\ell}\mathbf{1})\circ_n v
	=
	\sum_{m'=0}^{\ell-1}
	(-1)^{\ell-1+m'}\,
	\binom{2n+m'+1}{m'}\,
	\binom{\wt u+\ell-n-m'-3}{\ell-1-m'}\,
	\bigl(u\,R^{n}_{m',0}\,v\bigr).
	\end{equation}
	Moreover, for every $m'\in\N$ the inverse expansion is given by
	\begin{equation}\label{eq:Rk0-in-circle}
	u\,R^{n}_{m',0}\,v
	=
	\frac{1}{\binom{2n+m'+1}{m'}}
	\sum_{\ell=1}^{m'+1}
	\binom{\wt u-n-1}{m'+1-\ell}\,
	\bigl((u_{-\ell}\mathbf{1})\circ_n v\bigr).
	\end{equation}
\end{cor}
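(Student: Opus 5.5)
The plan is to read the first formula off Lemma~\ref{lem:ur1-Rmk-change-of-basis} by specializing and then collapsing the inner sum. The second formula then follows because the resulting transition matrix is Toeplitz after a diagonal rescaling, and such a matrix is inverted by inverting one power series.

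\emph{Step 1: specialize the lemma.} Set $m=k=0$ and $r=\ell$ in Lemma~\ref{lem:ur1-Rmk-change-of-basis}, and use $(u_{-\ell}\mathbf1)R^n_{0,0}v=(u_{-\ell}\mathbf1)\circ_n v$. This writes $(u_{-\ell}\mathbf1)\circ_n v$ as $\sum_{m'=0}^{\ell-1}c_{\ell,m'}\,uR^n_{m',0}v$, where
\[
c_{\ell,m'}=\sum_{a=0}^{\ell-1-m'}(-1)^a\binom{\alpha}{a}\binom{2n+\ell-a}{\ell-1-a}\binom{\ell-1-a}{m'},
\qquad \alpha=\wt u+n+\ell-1 .
\]

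\emph{Step 2: collapse the sum.} Put $N=2n+\ell-a$ and $K=\ell-1-a$, so that $N-K=2n+1$ does not depend on $a$. Writing both binomials in factorials gives
\[
\binom{N}{K}\binom{K}{m'}=\binom{2n+1+m'}{m'}\binom{N}{K-m'} .
\]
This pulls out the factor $\binom{2n+m'+1}{m'}$ uniformly in $a$.

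Set $j=\ell-1-m'$. Then $N-(K-m')=2n+1+m'$ is again constant, so the upper-negation rule gives
\[
\binom{N}{j-a}=(-1)^{j-a}\binom{-(2n+2+m')}{j-a}.
\]
The sum becomes $(-1)^j\sum_a\binom{\alpha}{a}\binom{-(2n+2+m')}{j-a}$. Chu--Vandermonde (valid for arbitrary complex upper entries) evaluates this to $(-1)^j\binom{\wt u+\ell-n-m'-3}{\ell-1-m'}$. This is exactly \eqref{eq:circle-in-Rk0}, since $(-1)^{\ell-1-m'}=(-1)^{\ell-1+m'}$.

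\emph{Step 3: inversion.} Write $D_{m'}=\binom{2n+m'+1}{m'}$, which is nonzero, and $c=\wt u-n-2$. Then
\[
\wt u+\ell-n-m'-3=c+d,\qquad d=\ell-1-m',
\]
so the coefficient of $D_{m'}\,uR^n_{m',0}v$ in $(u_{-\ell}\mathbf1)\circ_n v$ is
\[
(-1)^d\binom{c+d}{d}=\binom{-c-1}{d}.
\]
This depends only on $\ell-1-m'$. Index the circle products by $\ell-1$ and the rescaled residues $D_{m'}\,uR^n_{m',0}v$ by $m'$. The relation is then a lower-triangular Toeplitz system whose symbol is
\[
\sum_{d\ge0}\binom{-c-1}{d}t^d=(1+t)^{-(\wt u-n-1)} .
\]
The inverse Toeplitz matrix has symbol $(1+t)^{\wt u-n-1}$, whose entries are $\binom{\wt u-n-1}{e}$ with $e=m'+1-\ell$. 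Solving for $D_{m'}\,uR^n_{m',0}v$ and dividing by $D_{m'}$ gives \eqref{eq:Rk0-in-circle}.

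The main obstacle is the bookkeeping in Step 2. The two factorial identities must be applied so that the sum over $a$ becomes a genuine Vandermonde convolution, which requires checking the summation range $0\le a\le j$ and the sign. The inversion in Step 3 is formal once the Toeplitz structure has been noticed.
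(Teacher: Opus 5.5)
Your proposal is correct and follows the paper's own proof. Both specialize Lemma~\ref{lem:ur1-Rmk-change-of-basis} at $(m,r,k)=(0,\ell,0)$, pull out $\binom{2n+m'+1}{m'}$ by the same factorial product identity, and finish with upper negation plus Vandermonde. For the inverse, your Toeplitz/generating-function inversion repackages the paper's substitute-and-apply-Vandermonde check, since both rest on $(1+t)^{\wt u-n-1}(1+t)^{-(\wt u-n-1)}=1$.
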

\begin{proof}
	\textit{Proof of \eqref{eq:circle-in-Rk0}.}
	Since $a\circ_n v=a\,R^n_{0,0}v$, apply Lemma~\ref{lem:ur1-Rmk-change-of-basis}
	with $(m,r,k)=(0,\ell,0)$. The coefficient of $u\,R^n_{m',0}v$ is
	\[
	C_{m'}
	=
	\sum_{a=0}^{\ell-1-m'}
	(-1)^a\,
	\binom{\wt u+n+\ell-1}{a}\,
	\binom{2n+\ell-a}{\ell-1-a}\,
	\binom{\ell-1-a}{m'}.
	\]
	Substitute $j:=\ell-1-a-m'$ and use the product identity
    $\binom{2n+m'+j+1}{m'+j}\allowbreak\binom{m'+j}{m'}
    =\allowbreak\binom{2n+m'+1}{m'}\allowbreak\binom{2n+m'+j+1}{j}$
	to factor $\binom{2n+m'+1}{m'}$ outside the sum, leaving
	\[
	(-1)^{\ell-1-m'}\,\binom{2n+m'+1}{m'}
	\sum_{j=0}^{\ell-1-m'}
	(-1)^{j}\,
	\binom{\wt u+n+\ell-1}{\ell-1-m'-j}\,
	\binom{2n+m'+j+1}{j}.
	\]
	Apply $\binom{2n+m'+j+1}{j}=(-1)^j\binom{-(2n+m'+2)}{j}$ and
	Vandermonde's convolution $\sum_{j=0}^{N}\binom{\alpha}{N-j}\binom{-\beta-1}{j}=\binom{\alpha-\beta-1}{N}$
	with $N=\ell-1-m'$, $\alpha=\wt u+n+\ell-1$, $\beta=2n+m'+1$.
	Since $\alpha-\beta-1=\wt u+\ell-n-m'-3$ and $(-1)^{\ell-1-m'}=(-1)^{\ell-1+m'}$,
	this yields \eqref{eq:circle-in-Rk0}.

	\smallskip
	\textit{Proof of \eqref{eq:Rk0-in-circle}.}
	Substitute \eqref{eq:circle-in-Rk0} into the right-hand side and interchange summation.
	The coefficient of $u\,R^n_{r,0}v$ is proportional to
	\[
	S
	=
	\sum_{k=0}^{m'-r}
	\binom{\wt u-n-1}{(m'-r)-k}\,
	(-1)^k\binom{\wt u+k-n-2}{k},
	\]
	where we set $k:=\ell-r-1$ and use $(-1)^k\binom{\wt u+k-n-2}{k}=\binom{n-\wt u+1}{k}$
	(the same negative-top identity \eqref{eq:binom-coeff}).
	Vandermonde's convolution gives $S=\binom{0}{m'-r}=\delta_{r,m'}$,
	so the right-hand side of \eqref{eq:Rk0-in-circle} equals $u\,R^n_{m',0}v$.
\end{proof}

\begin{lem}[$L(-1)$--descendants and $O_n^\circ(u,V)$]\label{lem:Onu-Lminus1-inclusion}
	Fix $n\in\N$ and let $u\in V$ be homogeneous. Then
	\[
	O_n^\circ(L(-1)u,V)\subseteq O_n^\circ(u,V).
	\]
\end{lem}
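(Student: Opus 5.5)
Since $L(-1)u=u_{-2}\mathbf 1$ and $\wt(L(-1)u)=\wt u+1$, the inclusion is the case $r=2$ of Lemma~\ref{lem:ur1-Rmk-change-of-basis}. That lemma shows each generator $(u_{-2}\mathbf1)R^n_{m,k}v$ of $O_n^\circ(L(-1)u,V)$ is a linear combination of the elements $u\,R^n_{m,k}v$ and $u\,R^n_{m+1,k}v$. The plan is to make this explicit and then check the index constraint.

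\textbf{Step 1.} Fix $v\in V$ and $m\ge k\ge0$. By Definition~\ref{def:Onu-subspace}, these elements $(L(-1)u)\,R^n_{m,k}\,v$ span $O_n^\circ(L(-1)u,V)$. Apply \eqref{eq:ur1-Rmk-in-u-Rmk} with $r=2$. The right-hand side is
\[
\Bigl(\tbinom{m+2n+2}{1}-\tbinom{\wt u+n+k+1}{1}\Bigr)\,u\,R^n_{m,k}v+\tbinom{m+2n+2}{1}\,u\,R^n_{m+1,k}v .
\]
Alternatively, one can see this directly from $Y(L(-1)u,x)=\partial_xY(u,x)$ by one integration by parts. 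The derivative of $(1+x)^{\wt u+n+k+1}x^{-m-2n-2}$ gives a term in $(1+x)^{\wt u+n+k}x^{-m-2n-2}$ and a term in $(1+x)^{\wt u+n+k+1}x^{-m-2n-3}$. Writing $(1+x)^{\wt u+n+k+1}=(1+x)^{\wt u+n+k}(1+x)$ turns the second term into a combination of $R^n_{m,k}$ and $R^n_{m+1,k}$ residues of $u$.

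\textbf{Step 2.} Check that both index pairs are admissible for $O_n^\circ(u,V)$, which requires $m'\ge k'\ge0$. We have $m\ge k$, and also $m+1\ge k$, so both $u\,R^n_{m,k}v$ and $u\,R^n_{m+1,k}v$ are among the spanning elements of $O_n^\circ(u,V)$. Hence every generator of $O_n^\circ(L(-1)u,V)$ lies in $O_n^\circ(u,V)$, and the inclusion follows by linearity. Lemma~\ref{Zhu-lemma} could instead be used to pass to $k=0$ first, but this is unnecessary.

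The only delicate point is bookkeeping. The weight shift $\wt(L(-1)u)=\wt u+1$ must be absorbed correctly, so that the $(1+x)$ exponent for $u$ comes out as exactly $\wt u+n+k$; otherwise the residues would not be of the form $u\,R^n_{\cdot,k}$. Lemma~\ref{lem:ur1-Rmk-change-of-basis} already handles this, so the argument is essentially immediate.
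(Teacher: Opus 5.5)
Your proof is correct, but it takes a different route from the paper's main argument.

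\textbf{The paper's route.} The paper proves the inclusion via Lemma~\ref{lem:circle-from-modes}, which says that $O_n^\circ(u,V)$ is spanned by the circle products $(u_{-r}\mathbf1)\circ_n v$, $r\ge1$. It combines this with the mode identity $(L(-1)u)_{-r}=r\,u_{-r-1}$. Each spanning circle product $\bigl((L(-1)u)_{-r}\mathbf1\bigr)\circ_n v$ of $O_n^\circ(L(-1)u,V)$ is therefore the scalar multiple $r\,(u_{-r-1}\mathbf1)\circ_n v$ of a spanning element of $O_n^\circ(u,V)$.

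\textbf{Your route.} You apply Lemma~\ref{lem:ur1-Rmk-change-of-basis} with $r=2$ directly to every defining generator $(L(-1)u)R^n_{m,k}v$. Your coefficients $(m+2n+2)-(\wt u+n+k+1)$ and $m+2n+2$ are right, as is the index check $m+1\ge m\ge k$. This is the general-$k$ form of the ``useful residue identity'' that the paper records only after its proof. The paper's display is your $k=0$ case, left in the unexpanded form with $u\,R^n_{m+1,1}v=u\,R^n_{m+1,0}v+u\,R^n_{m,0}v$.

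\textbf{Comparison.} Your argument is more elementary and self-contained: it needs one integration by parts. It avoids the spanning statement of Lemma~\ref{lem:circle-from-modes}, whose reverse inclusion uses a triangularity and induction argument and which the paper needs in both directions. It also gives an explicit formula at every $k$ without first reducing to $k=0$. The paper's route is shorter once that lemma is available, and it shows the structural reason for the inclusion. The vacuum descendants of $L(-1)u$ are rescaled vacuum descendants of $u$. Hence the circle-product generators of $O_n^\circ(L(-1)u,V)$ are, up to scalars, a subfamily of those of $O_n^\circ(u,V)$.
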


\begin{proof}
By Lemma~\ref{lem:circle-from-modes} and the derivative identity
$(L(-1)u)_{-r}=r\,u_{-r-1}$, for $r\ge1$ and $v\in V$,
\[
\bigl((L(-1)u)_{-r}\mathbf1\bigr)\circ_n v
=r(u_{-r-1}\mathbf1)\circ_n v\in O_n^\circ(u,V).
\]
These circle products span $O_n^\circ(L(-1)u,V)$ by the same lemma.
The useful residue identity follows from the case $r=2$, $k=0$ of
\eqref{eq:ur1-Rmk-in-u-Rmk} and the binomial reduction in
Lemma~\ref{Zhu-lemma}, since $u_{-2}\mathbf1=L(-1)u$:
\[
(L(-1)u)R^n_{m,0}v
=-(\wt u+n+1)uR^n_{m,0}v
 +(m+2n+2)uR^n_{m+1,1}v
\qquad(m\ge0).
\]
\end{proof}

\begin{rem}\label{rem:Oncirc-sum-of-Onu}
	For each homogeneous $u\in V$, let $O_n^\circ(u,V)$ be as in Definition~\ref{def:Onu-subspace}.
	Then
	\[
	O_n^\circ(V)=\sum_{u\in V\ \text{hom.}} O_n^\circ(u,V)
	\]
	as a linear span.
	Moreover, for homogeneous $u\in V$ one has
	\[
	O_n^\circ(L(-1)u,V)\subseteq O_n^\circ(u,V),
	\]
	so it is enough to let $u$ range over a subset of $V$ whose images span $V/L(-1)V$.
\end{rem}

\subsection{Mode expansions for the generators of \texorpdfstring{$O_n^\circ(u,V)$}{On(u,V)}}\label{subsec:mode-expansions}

Let $u\in V$ be homogeneous and let $n\in\N$. For $v\in V$ and $m\in\N$, expanding
$(1+x)^{\wt u+n}$ and $Y(u,x)v=\sum_{j\in\Z}u_jv\,x^{-j-1}$ gives
\begin{equation}\label{eq:mode-expansion-general}
u R^n_{m,0} v
=
\sum_{i\ge 0}
\binom{\wt u+n}{i}\,
u_{\,i-(m+2n+2)}\,v,
\end{equation}
where binomial coefficients are interpreted as in \eqref{eq:binom-coeff}.

If $\wt u+n\in\Z_{\ge0}$, then the sum is finite, and reindexing yields
\begin{equation}\notag
u R^n_{m,0} v
=
\sum_{j=-m-2n-2}^{\wt u - m - n - 2}
\binom{\wt u+n}{j+m+2n+2}\,u_j v.
\end{equation}
In particular, only modes $u_j$ with $-m-2n-2 \le j \le \wt u - m - n -2$ appear.

\subsection{The \texorpdfstring{$p$}{p}-form generating function for relations in \texorpdfstring{$O_n^\circ(u,V)$}{On(u,V)}}\label{subsec:p-form}
We next package these mode-expansion relations into a single generating function (the $p$-form), which is convenient for solving the resulting recursions globally.
\begin{prop}[Global generating--function form for $(1+x)^{\wt u+n}Y(u,x)$]\label{prop:Y-full-GF}
	Let $V$ be a vertex operator algebra, fix $n\in\N$, and let $u\in V$ be homogeneous.
	For each $M\in\Z$, set
	\begin{equation}\label{eq:CM-def}
	C_M(u):=\sum_{i\ge 0}\binom{\wt u+n}{i}\,u_{\,i-M-1}\in \End(V),
	\end{equation}
	where the sum is finite on each $v\in V$ by truncation.
	Then for all $v\in V$,
\begin{equation}\label{eq:Y-full-GF-exact}
(1+x)^{\wt u+n}Y(u,x)v=\sum_{M\in\Z} C_M(u)v\,x^M.
\end{equation}
	Moreover, in the sense of Definition~\ref{def:W-congruence},
	\begin{equation}\label{eq:Y-full-GF-cutoff}
	Y(u,x)v \equiv_{O_n^\circ(u,V)} \frac{1}{(1+x)^{\wt u+n}}\sum_{M\le 2n} C_M(u)v\,x^M.
	\end{equation}
	Equivalently, $C_M(u)v=u R^n_{m,0} v\in O_n^\circ(u,V)$ for all $v\in V$ and all $M\ge 2n+1$, where $M=m+2n+1$.
\end{prop}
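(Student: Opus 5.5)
The plan is to prove the exact identity \eqref{eq:Y-full-GF-exact} by direct coefficient comparison, and then read off the congruence from the mode expansion \eqref{eq:mode-expansion-general}.

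\emph{Exact identity.} Fix $v\in V$. Expand $(1+x)^{\wt u+n}=\sum_{i\ge0}\binom{\wt u+n}{i}x^i$ as in \eqref{eq:binomial-expansion}, and write $Y(u,x)v=\sum_{j\in\Z}u_jv\,x^{-j-1}$. The product is well defined in $V((x))$, because $Y(u,x)v$ is lower truncated and the other factor is a power series. We then extract the coefficient of $x^M$. A term contributes when $i-j-1=M$, that is, $j=i-M-1$. So the coefficient is $\sum_{i\ge0}\binom{\wt u+n}{i}u_{i-M-1}v=C_M(u)v$. This sum is finite: since $u_jv=0$ for $j\gg0$, only finitely many $i$ contribute. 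This proves \eqref{eq:Y-full-GF-exact}.

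\emph{Identification with the residues.} For $M\ge 2n+1$, set $m=M-2n-1\ge0$. Then $i-M-1=i-(m+2n+2)$, so \eqref{eq:mode-expansion-general} gives $C_M(u)v=u\,R^n_{m,0}\,v$. Equivalently, $C_M(u)v=[x^{m+2n+1}](1+x)^{\wt u+n}Y(u,x)v=\Res_x x^{-m-2n-2}(1+x)^{\wt u+n}Y(u,x)v$, which is \eqref{eq:Rmk-def} with $k=0$. By Definition~\ref{def:Onu-subspace} (with $k=0\le m$), this element lies in $O_n^\circ(u,V)$. This is the ``equivalently'' statement.

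\emph{Cutoff congruence.} Split the sum in \eqref{eq:Y-full-GF-exact} into $\sum_{M\le 2n}$ and $\sum_{M\ge 2n+1}$. Every coefficient of the second piece lies in $O_n^\circ(u,V)$, so
\[
(1+x)^{\wt u+n}Y(u,x)v\equiv_{O_n^\circ(u,V)}\sum_{M\le 2n}C_M(u)v\,x^M .
\]
Now multiply both sides by the power series $(1+x)^{-\wt u-n}$. Definition~\ref{def:W-congruence} allows multiplication by scalars in $\C((x))$, and $(1+x)^{-\wt u-n}(1+x)^{\wt u+n}=1$ holds in $\C[[x]]$. This gives \eqref{eq:Y-full-GF-cutoff}.

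\emph{Main obstacle.} The only delicate point is checking that all these products make sense in $V((x))$. The truncated sum $\sum_{M\le 2n}C_M(u)v\,x^M$ is bounded above by $2n$ and also bounded below: $C_M(u)v=0$ for $M\ll0$, since then every mode index $i-M-1$ is large. So it is a Laurent series, and multiplying it by a power series in $x$ stays in $V((x))$. The tail piece has exponents $\ge 2n+1$, so it is a power series in $x$ times $x^{2n+1}$, and multiplying it by $(1+x)^{-\wt u-n}$ leaves each coefficient a finite combination of elements of $O_n^\circ(u,V)$. This keeps the congruence coefficientwise valid.
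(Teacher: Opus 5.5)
Your proposal is correct and follows essentially the same route as the paper. You prove the exact identity by extracting coefficients, identify $C_M(u)v$ for $M\ge 2n+1$ with $u\,R^n_{m,0}\,v\in O_n^\circ(u,V)$, discard those coefficients, and multiply by $(1+x)^{-\wt u-n}$ as Definition~\ref{def:W-congruence} permits; your added well-definedness checks (lower truncation of $\sum_{M\le 2n}C_M(u)v\,x^M$ and coefficientwise finiteness after the multiplication) make explicit what the paper leaves implicit.
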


\begin{proof}
Expand $Y(u,x)v$ and $(1+x)^{\wt u+n}$ using \eqref{eq:binomial-expansion}.
The coefficient at $x^M$ is
\[
[x^M]\bigl((1+x)^{\wt u+n}Y(u,x)v\bigr)
=\sum_{i\ge0}\binom{\wt u+n}{i}u_{i-M-1}v=C_M(u)v,
\]
with finitely many nonzero terms for each input by lower truncation.
This proves \eqref{eq:Y-full-GF-exact}. For $M=m+2n+1$, $m\ge0$,
\eqref{eq:Rmk-def} gives $C_M(u)v=uR^n_{m,0}v\in O_n^\circ(u,V)$.
Removing precisely these coefficients gives
\[
(1+x)^{\wt u+n}Y(u,x)v\equiv_{O_n^\circ(u,V)}\sum_{M\le2n}C_M(u)v\,x^M.
\]
Multiplication by $(1+x)^{-\wt u-n}$, as allowed by
Definition~\ref{def:W-congruence}, proves \eqref{eq:Y-full-GF-cutoff}.
\end{proof}
Throughout this work, it will be more convenient to use an alternative form, called the $p$-form, given in Proposition~\ref{prop:Y-full-p-form}.
\begin{prop}[Mode--grouped $p$--form for $Y(u,x)$]\label{prop:Y-full-p-form}
	Let $V$ be a vertex operator algebra, fix $n\in\N$, and let $u\in V$ be homogeneous.
	Let $C_M(u)$ be as in \eqref{eq:CM-def}.
	Then for every $v\in V$ we have the identity
	\begin{equation}\label{eq:CM-to-pform}
	\frac{1}{(1+x)^{\wt u + n}}\sum_{M\le 2n} C_M(u)v\,x^M
	=
	\sum_{j\ge -2n-1} u_j v\,p_{j,u}(x)
	\qquad\text{in }V((x)),
	\end{equation}
	where for each $j\ge -2n-1$ the scalar series $p_{j,u}(x)\in\C((x))$ is given by
	\begin{equation}\label{eq:pj-def}
	p_{j,u}(x)=\frac{1}{(1+x)^{\wt u + n}}\sum_{i=0}^{2n+j+1}\binom{\wt u + n}{i}\,x^{\,i-j-1}.
	\end{equation}
	Hence, in the sense of Definition~\ref{def:W-congruence},
	\begin{equation}\label{eq:Y-pform-main}
	Y(u,x)v \equiv_{O_n^\circ(u,V)} \sum_{j\ge -2n-1} u_j v\,p_{j,u}(x).
	\end{equation}
\end{prop}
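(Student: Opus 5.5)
The plan is to prove \eqref{eq:CM-to-pform} as an exact identity in $V((x))$ by substituting the definition \eqref{eq:CM-def} of $C_M(u)$ and regrouping by mode index. Then \eqref{eq:Y-pform-main} follows at once by combining it with the congruence \eqref{eq:Y-full-GF-cutoff} of Proposition~\ref{prop:Y-full-GF}.

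Fix $v\in V$. First, write
\[
\sum_{M\le 2n} C_M(u)v\,x^M=\sum_{M\le 2n}\sum_{i\ge0}\binom{\wt u+n}{i}u_{i-M-1}v\,x^M .
\]
Next, set $j=i-M-1$, so that $M=i-j-1$. The constraint $M\le 2n$ becomes $i\le 2n+j+1$, and since $i\ge0$ this forces $j\ge -2n-1$. The double sum therefore becomes
\[
\sum_{j\ge -2n-1} u_jv\sum_{i=0}^{2n+j+1}\binom{\wt u+n}{i}x^{\,i-j-1}.
\]
Each inner sum is a finite Laurent polynomial. Multiplying by $(1+x)^{-\wt u-n}$, which is a power series in $x$ expanded as in \eqref{eq:binomial-expansion}, gives exactly $\sum_{j} u_jv\,p_{j,u}(x)$ with $p_{j,u}$ as in \eqref{eq:pj-def}.

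The main point requiring care is justifying the interchange of summation, and showing that the right-hand side is a well-defined element of $V((x))$. By lower truncation there is $J$ with $u_jv=0$ for $j>J$, so only finitely many $j$ contribute, namely $-2n-1\le j\le J$. Each such term has only finitely many $i$. The rearrangement is therefore a finite regrouping, and the left-hand side is likewise a finite sum: $C_M(u)v\neq0$ requires $i-M-1\le J$ for some admissible $i$, which, combined with $M\le 2n$, leaves only finitely many $M$ when $i$ is bounded by the binomial truncation. The one subtlety is that in general the $i$-range is infinite for non-integral $\wt u+n$, yet $M\le 2n$ together with $j\ge-2n-1$ and $j\le J$ bounds $i$. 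Multiplication by the power series $(1+x)^{-\wt u-n}$ is then a legitimate operation on a Laurent polynomial with coefficients in $V$.

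Finally, \eqref{eq:Y-pform-main} follows by substituting \eqref{eq:CM-to-pform} into the right-hand side of \eqref{eq:Y-full-GF-cutoff}.
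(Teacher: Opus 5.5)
Your proposal is correct and follows the paper's proof: substitute \eqref{eq:CM-def}, reindex by $j=i-M-1$ so that $M\le 2n$ becomes $0\le i\le 2n+j+1$ (forcing $j\ge -2n-1$), divide by $(1+x)^{\wt u+n}$, and then combine with \eqref{eq:Y-full-GF-cutoff}. Your finiteness check, which the paper leaves implicit, can be stated more directly: if $u_jv=0$ for $j>J$, then every nonzero term has $-1-J\le M\le 2n$ and $0\le i\le 2n+1+J$, so you do not need any appeal to binomial truncation.
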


\begin{proof}
	Fix $v\in V$. Starting from the left-hand side of \eqref{eq:CM-to-pform} and using \eqref{eq:CM-def} gives
	\[
	\frac{1}{(1+x)^{\wt u + n}}\sum_{M\le 2n} C_M(u)v\,x^M
	=
	\frac{1}{(1+x)^{\wt u + n}}
	\sum_{M\le 2n}\sum_{i\ge0}\binom{\wt u+n}{i}\,u_{\,i-M-1}v\,x^M.
	\]
	Reindex by $j:=i-M-1$ (equivalently $M=i-j-1$). Then $u_{i-M-1}v=u_jv$ and $x^M=x^{\,i-j-1}$, while
	$M\le 2n$ becomes $i\le 2n+j+1$. Thus the double sum equals
	\[
	\frac{1}{(1+x)^{\wt u + n}}
	\sum_{j\ge -2n-1} u_j v\left(\sum_{i=0}^{2n+j+1}\binom{\wt u+n}{i}\,x^{\,i-j-1}\right),
	\]
	which is \eqref{eq:CM-to-pform} by \eqref{eq:pj-def}.

	\smallskip
	Finally, Proposition~\ref{prop:Y-full-GF} (equation \eqref{eq:Y-full-GF-cutoff}) gives
	\[
	Y(u,x)v \equiv_{O_n^\circ(u,V)} \frac{1}{(1+x)^{\wt u + n}}\sum_{M\le 2n} C_M(u)v\,x^M,
	\]
	and combining this with \eqref{eq:CM-to-pform} yields \eqref{eq:Y-pform-main}.
\end{proof}
\subsection{Analysis of the \texorpdfstring{$p$}{p}--form: principal part and tail series}\label{subsec:pform-analysis}

Recall $p_{j,u}(x)$ from \eqref{eq:pj-def}. The next lemma isolates its principal part and
packages the remaining terms into a tail series that is convenient for differentiation.
\begin{lem}[$p$--form decomposition: principal part vs.\ tail]\label{lem:p-decomposition}
	Fix $n\in\N$ and homogeneous $u\in V$. For each $j\ge -2n-1$, there is a unique series
	$P_{j,u}(x)\in\C[[x]]$ such that for $p_{j,u}(x)$ from \eqref{eq:pj-def}, we have
\begin{equation}\label{eq:p-P-decomp}
p_{j,u}(x)=x^{-j-1}+x^{2n+1}P_{j,u}(x).
\end{equation}
	Moreover,
	\begin{equation}\label{eq:P-explicit}
	P_{j,u}(x)
	=
	-\frac{1}{(1+x)^{\wt u+n}}
	\sum_{t\ge 0}\binom{\wt u+n}{2n+j+2+t}\,x^{t}.
	\end{equation}
	Equivalently,
	\begin{align}\label{eq:P-coeff-two-forms}
	P_{j,u}(x)
	&=
	\sum_{m\ge0}
	\left(
	\sum_{i=0}^{2n+j+1}
	\binom{\wt u+n}{i}
	\binom{-\wt u-n}{2n+2+m+j-i}
	\right)x^m
	\notag\\
	&=
	-
	\sum_{m\ge0}
	\left(
	\sum_{t=0}^{m}
	\binom{-\wt u-n}{m-t}
	\binom{\wt u+n}{2n+j+2+t}
	\right)x^m.
	\end{align}
	We have $P_{j,u}(0)=-\binom{\wt u+n}{2n+j+2}$. Suppose $\wt u+n\in\Z_{\ge 0}$.
	If $\wt u+n\le 2n+j+1$, then $P_{j,u}(x)=0$, that is, $p_{j,u}(x)=x^{-j-1}$.
	If $\wt u+n\ge 2n+j+2$, then $(1+x)^{\wt u+n}P_{j,u}(x)$ is a nonzero polynomial of
	degree $\wt u-n-j-2<\wt u+n$, so $P_{j,u}(x)$ is not a polynomial.
\end{lem}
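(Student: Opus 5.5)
The plan is to work throughout with $\alpha:=\wt u+n$ and with the identity $(1+x)^{\alpha}(1+x)^{-\alpha}=1$ in $\C[[x]]$, which is valid for every $\alpha\in\C$ under the expansion convention \eqref{eq:binomial-expansion}. Write $p_{j,u}(x)=x^{-j-1}(1+x)^{-\alpha}S_j(x)$, where $S_j(x)=\sum_{i=0}^{2n+j+1}\binom{\alpha}{i}x^i$ is the truncation of $(1+x)^\alpha$ at degree $2n+j+1$. Then $S_j(x)=(1+x)^\alpha-\sum_{t\ge0}\binom{\alpha}{2n+j+2+t}x^{2n+j+2+t}$, and hence
\[
p_{j,u}(x)=x^{-j-1}-x^{-j-1}(1+x)^{-\alpha}x^{2n+j+2}\sum_{t\ge0}\binom{\alpha}{2n+j+2+t}x^t .
\]
Since $x^{-j-1}x^{2n+j+2}=x^{2n+1}$, this gives \eqref{eq:p-P-decomp} with $P_{j,u}$ as in \eqref{eq:P-explicit}. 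The right side is manifestly in $\C[[x]]$ because $j\ge-2n-1$ keeps all the indices sensible. Uniqueness is immediate: $p_{j,u}(x)-x^{-j-1}$ is a fixed Laurent series, and dividing by $x^{2n+1}$ determines $P_{j,u}$.

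For the two coefficient formulas in \eqref{eq:P-coeff-two-forms}, the second comes from multiplying out \eqref{eq:P-explicit} with $(1+x)^{-\alpha}=\sum_s\binom{-\alpha}{s}x^s$ and collecting $x^m$, which is a Cauchy product. For the first, I would instead expand $x^{-2n-1}\bigl(p_{j,u}(x)-x^{-j-1}\bigr)$ directly as $x^{-2n-j-2}\bigl((1+x)^{-\alpha}S_j(x)-1\bigr)$. The coefficient of $x^{2n+j+2+m}$ in $(1+x)^{-\alpha}S_j(x)$ is $\sum_{i=0}^{2n+j+1}\binom{\alpha}{i}\binom{-\alpha}{2n+j+2+m-i}$. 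The $-1$ only affects the $x^0$ coefficient, and that lies below the range $m\ge0$. One should also check that the coefficients of $x^{k}$ for $0<k<2n+j+2$ in $(1+x)^{-\alpha}S_j$ vanish; they equal the corresponding coefficients of $(1+x)^{-\alpha}(1+x)^{\alpha}=1$, since $S_j$ agrees with $(1+x)^\alpha$ to that order. This consistency check is the only point that needs care, and it is exactly what makes the first form agree with the second. Evaluating at $x=0$ gives $P_{j,u}(0)=-\binom{\alpha}{2n+j+2}$.

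For the final assertions, assume $\alpha\in\Z_{\ge0}$. If $\alpha\le 2n+j+1$, every $\binom{\alpha}{2n+j+2+t}$ vanishes, so $P_{j,u}=0$. If $\alpha\ge2n+j+2$, then $(1+x)^\alpha P_{j,u}(x)=-\sum_{t=0}^{\alpha-2n-j-2}\binom{\alpha}{2n+j+2+t}x^t$. This is a polynomial whose top coefficient is $-\binom{\alpha}{\alpha}=-1\ne0$, and its degree is $\alpha-2n-j-2=\wt u-n-j-2$. Since $j\ge-2n-1$, this degree is at most $\alpha-1<\alpha$.

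It remains to show that $P_{j,u}$ is then not a polynomial. If $P_{j,u}$ were a polynomial $q$, then $(1+x)^\alpha q(x)$ would be divisible by $(1+x)^\alpha$. A nonzero polynomial of degree less than $\alpha$ cannot be divisible by $(1+x)^\alpha$ when $\alpha\ge1$, and $\alpha\ge 2n+j+2\ge1$ here. This contradiction finishes the argument. I expect no real obstacle; the only delicate step is the bookkeeping of the low-order cancellation in the first form of \eqref{eq:P-coeff-two-forms}.
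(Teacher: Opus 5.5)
Your proof is correct and is essentially the paper's argument. You subtract the degree-$\le 2n+j+1$ truncation from the full expansion of $(1+x)^{\wt u+n}$, divide by $(1+x)^{\wt u+n}$, obtain the second coefficient form by a Cauchy product, and rule out polynomiality with the degree bound $\wt u-n-j-2<\wt u+n$. The only difference is that you read the first coefficient form directly as $[x^{2n+j+2+m}]\bigl((1+x)^{-\wt u-n}S_j(x)\bigr)$, whereas the paper derives it from the second form by splitting the vanishing convolution $\sum_{i=0}^{2n+2+m+j}\binom{\wt u+n}{i}\binom{-\wt u-n}{2n+2+m+j-i}=0$; the low-order check you flag is then automatic, since $P_{j,u}\in\C[[x]]$ is already known from your first step.
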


\begin{proof}
	Starting from \eqref{eq:pj-def}, multiply by $(1+x)^{\wt u+n}$ to obtain
	\[
	(1+x)^{\wt u+n}p_{j,u}(x)=\sum_{i=0}^{2n+j+1}\binom{\wt u+n}{i}\,x^{\,i-j-1}.
	\]
	On the other hand,
	\[
	(1+x)^{\wt u+n}x^{-j-1}=\sum_{i\ge 0}\binom{\wt u+n}{i}\,x^{\,i-j-1}.
	\]
	Subtracting gives
	\[
	(1+x)^{\wt u+n}\bigl(x^{-j-1}-p_{j,u}(x)\bigr)
	=
	\sum_{i\ge 2n+j+2}\binom{\wt u+n}{i}\,x^{\,i-j-1}
	=
	 x^{2n+1}\sum_{t\ge 0}\binom{\wt u+n}{2n+j+2+t}\,x^{t}.
	\]
	Dividing by $(1+x)^{\wt u+n}$ yields \eqref{eq:p-P-decomp} and \eqref{eq:P-explicit}. Uniqueness of $P_{j,u}(x)$ follows from \eqref{eq:p-P-decomp}.

	\smallskip
	To obtain the second expansion in \eqref{eq:P-coeff-two-forms}, expand
	\[
	(1+x)^{-(\wt u+n)}=\sum_{k\ge0}\binom{-\wt u-n}{k}x^k
	\]
	and substitute into \eqref{eq:P-explicit}, then collect powers of $x$.

	\smallskip
	For the first expansion in \eqref{eq:P-coeff-two-forms}, take the coefficient of $x^{2n+2+m+j}$ in
	$(1+x)^{\wt u+n}(1+x)^{-(\wt u+n)}=1$ to obtain
	\[
	0=\sum_{i=0}^{2n+2+m+j}\binom{\wt u+n}{i}\binom{-\wt u-n}{2n+2+m+j-i}.
	\]
	Splitting this sum into $0\le i\le 2n+j+1$ and $2n+j+2\le i\le 2n+2+m+j$ and reindexing the second part yields the first expansion.

	\smallskip
	Finally, let $\wt u+n\in\Z_{\ge0}$.  Then $\binom{\wt u+n}{2n+j+2+t}=0$ unless
	$2n+j+2+t\le\wt u+n$, so the sum in \eqref{eq:P-explicit} is the polynomial
	$\sum_{t=0}^{\wt u-n-j-2}\binom{\wt u+n}{2n+j+2+t}x^t$.  It is zero when
	$\wt u+n\le 2n+j+1$.  Otherwise its top coefficient is
	$\binom{\wt u+n}{\wt u+n}=1$, in degree $\wt u-n-j-2$, and
	$\wt u-n-j-2<\wt u+n$ because $j\ge-2n-1$.  If $P_{j,u}(x)$ were a polynomial, then
	$(1+x)^{\wt u+n}P_{j,u}(x)$ would be zero or have degree at least $\wt u+n$.
\end{proof}
\begin{lem}[Higher derivative structure of the $p$--series]\label{lem:p-higher-derivatives}
	Fix $n\in\N$ and homogeneous $u\in V$. Let $j\ge -2n-1$ and $r\in\N$ with $r\ge 1$.
	Let $p_{j,u}(x)$ be the series from \eqref{eq:pj-def}. Define
	\begin{equation}\label{eq:pjmu-def}
	p_{j,r,u}(x):=\frac{1}{(r-1)!}\frac{d^{r-1}}{dx^{r-1}}\,p_{j,u}(x)\in\C((x)).
	\end{equation}
	\begin{enumerate}
		\item[(i)] More explicitly, using \eqref{eq:pjmu-def} and \eqref{eq:pj-def}, we have the ``pulled-out factor'' form
	\begin{equation}\label{eq:pm-factor-form}
	\begin{aligned}
	p_{j,r,u}(x)
	&=
	\frac{1}{(1+x)^{\wt u+n+r-1}}
	\sum_{s=0}^{r-1}
	\binom{-\wt u-n}{r-1-s}(1+x)^s\\
	&\qquad\cdot
	\sum_{i=0}^{2n+j+1}
	\binom{\wt u+n}{i}\binom{i-j-1}{s}\,x^{\,i-j-1-s}.
	\end{aligned}
	\end{equation}

				\item[(ii)] There is a unique series $P_{j,r,u}(x)\in\C[[x]]$ such that
		\begin{equation}\label{eq:pm-Pm-decomp}
		p_{j,r,u}(x)=\binom{-j-1}{r-1}x^{-j-r}+x^{2n+2-r}P_{j,r,u}(x).
		\end{equation}

		Moreover,
		\begin{equation}\label{eq:Pm-coeff-scaling}
		P_{j,r,u}(x)
		=
		\sum_{t\ge0}\binom{2n+1+t}{r-1}
		\left(
		\sum_{i=0}^{2n+j+1}
		\binom{\wt u+n}{i}\binom{-\wt u-n}{2n+2+t+j-i}
		\right)x^t.
		\end{equation}
		In particular, if $0\le\wt u+n\le 2n+j+1$, then $P_{j,r,u}(x)=0$.
	\end{enumerate}
\end{lem}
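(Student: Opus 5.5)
The plan is to derive both parts directly from the closed form \eqref{eq:pj-def} and from Lemma~\ref{lem:p-decomposition}, using the Leibniz rule and the elementary identity $\frac{1}{s!}\frac{d^s}{dx^s}x^{N}=\binom{N}{s}x^{N-s}$. This identity holds for all $N\in\Z$ and $s\in\N$ in $\C((x))$ (and for complex exponents on $(1+x)^\alpha$ expanded in nonnegative powers of $x$). Throughout, write $a:=\wt u+n$.

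For (i), we start from $p_{j,u}(x)=(1+x)^{-a}\sum_{i=0}^{2n+j+1}\binom{a}{i}x^{i-j-1}$, which is a finite sum of products. For each $i$, we apply the Leibniz rule to $\frac{1}{(r-1)!}\frac{d^{r-1}}{dx^{r-1}}\bigl((1+x)^{-a}x^{i-j-1}\bigr)$, splitting the $r-1$ derivatives as $r-1-s$ on the first factor and $s$ on the second. The factorials combine as $\binom{r-1}{s}/(r-1)!=1/((r-1-s)!\,s!)$. This gives
\[
\sum_{s=0}^{r-1}\binom{-a}{r-1-s}(1+x)^{-a-(r-1-s)}\binom{i-j-1}{s}x^{i-j-1-s}.
\]
We then factor out $(1+x)^{-a-r+1}$, leaving $(1+x)^{s}$, and interchange the finite sums over $i$ and $s$. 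This is exactly \eqref{eq:pm-factor-form}.

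For (ii), we instead use the decomposition \eqref{eq:p-P-decomp}, $p_{j,u}(x)=x^{-j-1}+x^{2n+1}P_{j,u}(x)$. We write $P_{j,u}(x)=\sum_{t\ge0}c_tx^t$, with $c_t$ the first expression in \eqref{eq:P-coeff-two-forms}, that is, $c_t=\sum_{i=0}^{2n+j+1}\binom{a}{i}\binom{-a}{2n+2+t+j-i}$. Differentiation on $\C((x))$ is continuous in the $x$-adic topology, so we may differentiate term by term. The first term gives $\binom{-j-1}{r-1}x^{-j-r}$. The tail gives
\[
\sum_{t\ge0}c_t\binom{2n+1+t}{r-1}x^{2n+2+t-r}.
\]
Whenever $2n+2+t-r<0$, we have $0\le 2n+1+t<r-1$, so $\binom{2n+1+t}{r-1}=0$. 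Hence the tail equals $x^{2n+2-r}P_{j,r,u}(x)$ with $P_{j,r,u}(x)\in\C[[x]]$ given by \eqref{eq:Pm-coeff-scaling}, even when $r>2n+2$. Uniqueness follows because $x^{2n+2-r}P_{j,r,u}(x)=p_{j,r,u}(x)-\binom{-j-1}{r-1}x^{-j-r}$ is determined in $\C((x))$, and multiplication by $x^{2n+2-r}$ is injective.

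For the final claim, we read the hypothesis $0\le a\le 2n+j+1$ with $a\in\Z_{\ge0}$, the integrality setting of Lemma~\ref{lem:p-decomposition}. In that case the lemma gives $P_{j,u}=0$, so every $c_t=0$ and $P_{j,r,u}=0$. Equivalently, every $\binom{a}{2n+j+2+t}$ in the second form of \eqref{eq:P-coeff-two-forms} vanishes.

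The only step needing care is the bookkeeping in (ii). One must check that term-by-term differentiation is legitimate and that the vanishing of $\binom{2n+1+t}{r-1}$ for small $t$ keeps $P_{j,r,u}$ a genuine power series when $r>2n+2$. Everything else is a direct Leibniz computation.
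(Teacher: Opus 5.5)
Your proposal is correct and follows the paper's proof essentially step for step. For (i) you apply Leibniz's rule to the product in \eqref{eq:pj-def}. For (ii) you differentiate $p_{j,u}(x)=x^{-j-1}+x^{2n+1}P_{j,u}(x)$ termwise using the first expansion in \eqref{eq:P-coeff-two-forms}, and you deduce the vanishing claim from $P_{j,u}=0$. Your explicit check that $\binom{2n+1+t}{r-1}=0$ whenever $2n+2+t-r<0$, so that $P_{j,r,u}\in\C[[x]]$ even for $r>2n+2$, is slightly more careful than the paper's write-up. So is your remark that the final claim uses integrality of $\wt u+n$, which is automatic for integer weights.
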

\begin{proof}
	(i) Apply Leibniz' rule directly to the product in \eqref{eq:pj-def}, divide by $(r-1)!$ as in
	\eqref{eq:pjmu-def}, and use
	\[
	\frac{1}{q!}\frac{d^q}{dx^q}(1+x)^{-(\wt u+n)}=\binom{-\wt u-n}{q}(1+x)^{-(\wt u+n)-q},\,
	\frac{1}{s!}\frac{d^s}{dx^s}x^{\,i-j-1}=\binom{i-j-1}{s}x^{\,i-j-1-s}.
	\]
	Factoring $(1+x)^{-(\wt u+n+r-1)}$ yields \eqref{eq:pm-factor-form}.

	\smallskip
	(ii) By Lemma~\ref{lem:p-decomposition} (equation \eqref{eq:p-P-decomp}) we have
	$p_{j,u}(x)=x^{-j-1}+x^{2n+1}P_{j,u}(x)$ with $P_{j,u}(x)\in\C[[x]]$.
	Applying $\frac{1}{(r-1)!}\frac{d^{r-1}}{dx^{r-1}}$ and using
	$\frac{1}{(r-1)!}\frac{d^{r-1}}{dx^{r-1}}x^\alpha=\binom{\alpha}{r-1}x^{\alpha-(r-1)}$
	gives the principal part $\binom{-j-1}{r-1}x^{-j-r}$; factoring $x^{2n+2-r}$ from the differentiated
	$x^{2n+1}P_{j,u}(x)$ defines $P_{j,r,u}(x)$ and yields \eqref{eq:pm-Pm-decomp}, with uniqueness clear.
	Thus
	\[
	x^{2n+2-r}P_{j,r,u}(x)=\frac{1}{(r-1)!}\frac{d^{r-1}}{dx^{r-1}}\bigl(x^{2n+1}P_{j,u}(x)\bigr).
	\]

	\smallskip
	For \eqref{eq:Pm-coeff-scaling}, substitute the first expansion in \eqref{eq:P-coeff-two-forms} for $P_{j,u}(x)$,
	multiply by $x^{2n+1}$, differentiate term-by-term, divide by $(r-1)!$, and then factor $x^{2n+2-r}$.
	\smallskip
	Finally, if $0\le\wt u+n\le 2n+j+1$, then $P_{j,u}(x)=0$ by Lemma~\ref{lem:p-decomposition},
	and the last display gives $P_{j,r,u}(x)=0$.
\end{proof}
The preceding results show that, for each homogeneous $u$, the residue
relations defining the subspace $O_n^\circ(u,V)\subset O_n^\circ(V)$
determine all sufficiently deep modes $u_{-2n-2-m}$ in terms of the
retained modes $u_j$ with $j\ge -2n-1$, with only finitely many acting nontrivially on each input. It is therefore natural to
replace $Y(u,x)$ by a canonical truncated field which records exactly
this data in the quotient $V/O_n^\circ(u,V)((x))$.
\subsection{Left truncated fields \texorpdfstring{$Y_L(u,x)$}{YL(u,x)}: tail recursion and weight regimes}

It will be convenient to package the image of $Y(u,x)$ modulo $O_n^\circ(u,V)$ as a truncated
field depending on $n$. This field encodes precisely the relations arising from the
subspaces $O_n^\circ(u,V)$ introduced in Definition~\ref{def:Onu-subspace} and the residue
relations of Lemma~\ref{Zhu-lemma}.

\begin{defn}[Left truncated field]\label{def:YL}
	Fix $n\in\N$ and let $u\in V$ be homogeneous. Define the \emph{left truncated field}
	\begin{equation}\label{eq:YL-pform-def}
	Y_L(u,x):=\sum_{j\ge -2n-1} u_j\,p_{j,u}(x)\ \in\ \Hom\!\bigl(V,V((x))\bigr),
	\end{equation}
	where $p_{j,u}(x)\in\C((x))$ is given by \eqref{eq:pj-def} (and $p_{j,u}(x)=0$ for $j<-2n-1$).
	For an arbitrary $u\in V$, $Y_L(u,x)$ is defined by linearity from the homogeneous
	components of $u$; this is well defined because $p_{j,u}(x)$ depends on $u$ only through $\wt u$.
\end{defn}

\begin{lem}[Comparing $Y$ and $Y_L$ modulo $O_n^\circ(V)$]\label{lem:Y-tilde-YL}
	Let $n\in\N$ and let $u\in V$ be homogeneous. Then, in the sense of Definition~\ref{def:W-congruence},
	\[
	Y(u,x)\equiv_{O_n^\circ(u,V)} Y_L(u,x).
	\]
	In particular, since $O_n^\circ(u,V)\subseteq O_n^\circ(V)$, we have $Y(u,x)\sim_n Y_L(u,x)$.
\end{lem}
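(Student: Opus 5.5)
The plan is to obtain the statement as a direct combination of Proposition~\ref{prop:Y-full-GF} and Proposition~\ref{prop:Y-full-p-form}, read off against Definition~\ref{def:YL}. No new residue computation is needed; the only work is checking that the congruence is meant, and holds, in the sense of Definition~\ref{def:W-congruence}(3) for linear maps $V\to V((x))$.

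Fix homogeneous $u$ and an arbitrary $v\in V$. By \eqref{eq:Y-full-GF-cutoff}, every coefficient of
\[
Y(u,x)v-(1+x)^{-\wt u-n}\sum_{M\le 2n}C_M(u)v\,x^M
\]
lies in $O_n^\circ(u,V)$. The underlying point is that the difference of the multiplied-out series $(1+x)^{\wt u+n}Y(u,x)v-\sum_{M\le 2n}C_M(u)v\,x^M$ is $\sum_{m\ge0}uR^n_{m,0}v\,x^{m+2n+1}$. Each of these coefficients lies in $O_n^\circ(u,V)$ by Definition~\ref{def:Onu-subspace}, taking $k=0\le m$.

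Multiplying by the scalar power series $(1+x)^{-\wt u-n}$ produces only finite linear combinations in each coefficient, because the series is supported in $x$-degrees $\ge 2n+1$. This is exactly the closure property recorded at the end of Definition~\ref{def:W-congruence}.

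Next, \eqref{eq:CM-to-pform} identifies the truncated sum with $\sum_{j\ge-2n-1}u_jv\,p_{j,u}(x)$, which equals $Y_L(u,x)v$ by \eqref{eq:YL-pform-def}. I will also note that this series is a well-defined element of $V((x))$. Only finitely many $u_jv$ are nonzero, since $u_jv=0$ for $j\gg0$, and each $p_{j,u}(x)\in x^{-j-1}\C[[x]]$. Hence the sum is a finite combination of Laurent series, and it does lie in $V((x))$.

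Since $v$ was arbitrary, $(Y(u,x)-Y_L(u,x))v\in O_n^\circ(u,V)((x))$ for all $v$, which is the asserted congruence. The final clause follows from the inclusion $O_n^\circ(u,V)\subseteq O_n^\circ(V)$ from Lemma~\ref{Zhu-lemma}, so the congruence also holds modulo $O_n^\circ(V)$, i.e.\ $\sim_n$.

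The one step that needs care is the well-definedness and finiteness check for the coefficient-wise congruence after multiplying by $(1+x)^{-\wt u-n}$. Concretely, I need to confirm that each coefficient of the product is a finite combination of elements $uR^n_{m,0}v$. This holds because the tail is supported in degrees $\ge 2n+1$ and $(1+x)^{-\wt u-n}\in\C[[x]]$.
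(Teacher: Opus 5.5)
Your proposal is correct and is essentially the paper's proof: the lemma is read off from \eqref{eq:Y-pform-main} in Proposition~\ref{prop:Y-full-p-form} (which combines \eqref{eq:Y-full-GF-cutoff} with \eqref{eq:CM-to-pform}) together with Definition~\ref{def:YL}, and the final clause uses the inclusion $O_n^\circ(u,V)\subseteq O_n^\circ(V)$. Your extra finiteness and well-definedness checks simply unpack what those two propositions already establish.
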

\begin{proof}
Equation~\eqref{eq:Y-pform-main} gives
$Y(u,x)\equiv_{O_n^\circ(u,V)}\sum_{j\ge-2n-1}u_jp_{j,u}(x)=Y_L(u,x)$
by Definition~\ref{def:YL}. The final claim uses
$O_n^\circ(u,V)\subseteq O_n^\circ(V)$ from Definition~\ref{def:Onu-subspace}.
\end{proof}

\begin{defn}\label{plus-minus-split}
	Let $F(x)=\sum_{t\in\Z}F_t x^t\in \End(V)[[x,x^{-1}]]$. Define
	\[
	F^+(x):=\sum_{t\ge0}F_t x^t,\qquad F^-(x):=\sum_{t<0}F_t x^t.
	\]
	In particular, for $Y(u,x)=\sum_{k\in\Z}u_k x^{-k-1}$ we set
	\[
	Y^+(u,x):=\sum_{k\le -1}u_k x^{-k-1},\qquad
	Y^-(u,x):=\sum_{k\ge 0}u_k x^{-k-1}.
	\]
\end{defn}
\subsection{Solving the recursion in \texorpdfstring{$O_n^\circ(u,V)$}{On(u,V)}}
Now we solve the recursion presented in \eqref{eq:mode-expansion-general} using Proposition \ref{prop:Y-full-p-form}.

\begin{prop}[Tail recursion for $Y(u,x)$]\label{prop:tail-AB1}
	Let $V$ be a vertex operator algebra, fix $n\in\N$, and let $u\in V$ be homogeneous.
	Using the decomposition \eqref{eq:p-P-decomp} from Lemma~\ref{lem:p-decomposition}, the following hold.

	\smallskip
	\noindent{\rm(1) General tail recursion (all weight regimes).}
	For every $v\in V$ and $m\in\N$,
	\begin{equation}\label{eq:tail-general}
	u_{-2n-2-m}v
	\;\equiv_{O_n^\circ(u,V)}\;
	\sum_{j\ge -2n-1}
	\left(
	\sum_{i=0}^{\,2n+j+1}
	\binom{\wt u+n}{i}\,
	\binom{-\wt u-n}{\,2n+2+m+j-i\,}
	\right)
	u_j v,
	\end{equation}
	where for each fixed $v\in V$ the sum over $j$ is finite by truncation.

	\smallskip
	\noindent{\rm(2) Field-level form.}
	Writing $Y(u,x)v=\sum_{r\in\Z}u_r v\,x^{-r-1}$ and separating the deep tail
	$r=-2n-2-m$ ($m\ge0$), we obtain
\begin{equation}\label{eq:Y-from-tail-general}
\begin{aligned}
Y(u,x)v
&=
\sum_{r\ge -2n-1} u_r v\,x^{-r-1}
+\sum_{m\ge0} u_{-2n-2-m}v\,x^{2n+1+m}\\
&\equiv_{O_n^\circ(u,V)}
\sum_{r\ge -2n-1} u_r v\,x^{-r-1}
+\sum_{m\ge0} x^{2n+1+m}
\sum_{j\ge -2n-1} \\
 &\quad\,\left(
\sum_{i=0}^{2n+j+1}
\binom{\wt u+n}{i}
\binom{-\wt u-n}{2n+2+m+j-i}
\right)u_j v.
\end{aligned}
\end{equation}
\end{prop}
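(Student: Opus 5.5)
The plan is to read \eqref{eq:tail-general} off as a coefficient comparison in the congruence \eqref{eq:Y-pform-main} of Proposition~\ref{prop:Y-full-p-form}, using the decomposition \eqref{eq:p-P-decomp} of Lemma~\ref{lem:p-decomposition}. Fix $v\in V$. By Lemma~\ref{lem:Y-tilde-YL}, $Y(u,x)v\equiv_{O_n^\circ(u,V)}\sum_{j\ge-2n-1}u_jv\,p_{j,u}(x)$ in $V((x))$. Substitute $p_{j,u}(x)=x^{-j-1}+x^{2n+1}P_{j,u}(x)$ to get
\[
\sum_{j\ge-2n-1}u_jv\,p_{j,u}(x)=\sum_{j\ge-2n-1}u_jv\,x^{-j-1}+x^{2n+1}\sum_{j\ge-2n-1}u_jv\,P_{j,u}(x).
\]
The first sum is exactly the part $\sum_{r\ge-2n-1}u_rv\,x^{-r-1}$ of $Y(u,x)v$. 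Cancelling it from both sides, which is legitimate since $\equiv_W$ is compatible with subtraction coefficientwise, leaves
\[
\sum_{m\ge0}u_{-2n-2-m}v\,x^{2n+1+m}\equiv_{O_n^\circ(u,V)}\sum_{j\ge-2n-1}u_jv\,x^{2n+1}P_{j,u}(x).
\]

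Next, insert the first expansion in \eqref{eq:P-coeff-two-forms} for $P_{j,u}(x)$ and extract the coefficient of $x^{2n+1+m}$ on both sides; coefficient extraction preserves congruences by Definition~\ref{def:W-congruence}. The left side contributes $u_{-2n-2-m}v$. The right side contributes $\sum_{j}\bigl(\sum_{i=0}^{2n+j+1}\binom{\wt u+n}{i}\binom{-\wt u-n}{2n+2+m+j-i}\bigr)u_jv$, which is \eqref{eq:tail-general}. Part (2) then follows by splitting $Y(u,x)v$ into $r\ge-2n-1$ and $r=-2n-2-m$ (an identity of Laurent series), and substituting (1) termwise in $m$.

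The only point needing care is the interchange of the sums over $j$ and powers of $x$, i.e.\ that the right-hand side is a well-defined element of $V((x))$ with finite coefficients. This holds because $u_jv=0$ for $j$ large (lower truncation), so for fixed $v$ only finitely many $j\ge-2n-1$ contribute. Each $P_{j,u}(x)\in\C[[x]]$, so the coefficient of any fixed power $x^{2n+1+m}$ is a finite sum. I expect this finiteness bookkeeping to be the main, though mild, obstacle. It also explains the statement's remark that there is no uniform upper mode bound: the range of $j$ depends on $v$.
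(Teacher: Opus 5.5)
Your proposal is correct and follows essentially the same route as the paper's proof. Both use the congruence $Y(u,x)\equiv_{O_n^\circ(u,V)}Y_L(u,x)$, split $p_{j,u}(x)=x^{-j-1}+x^{2n+1}P_{j,u}(x)$, extract the coefficient of $x^{2n+1+m}$, and apply the first formula of \eqref{eq:P-coeff-two-forms}. The one cosmetic difference is that you cancel the band sum before extracting coefficients, whereas the paper simply notes that this sum has degree at most $2n$.
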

\begin{proof}
	Fix $v\in V$. By Lemma~\ref{lem:Y-tilde-YL} and Definition~\ref{def:YL} we have, in the sense of
	Definition~\ref{def:W-congruence},
	\[
	Y(u,x)v \equiv_{O_n^\circ(u,V)} Y_L(u,x)v
	=
	\sum_{j\ge -2n-1} u_j v\,p_{j,u}(x).
	\]
	By Lemma~\ref{lem:p-decomposition}, for each $j\ge-2n-1$,
	\[
	p_{j,u}(x)=x^{-j-1}+x^{2n+1}P_{j,u}(x),\qquad P_{j,u}(x)\in\C[[x]].
	\]
	Substituting gives
	\begin{equation}\label{eq:YL-band-tail-split}
	Y_L(u,x)v
	=
	\sum_{j\ge -2n-1} u_j v\,x^{-j-1}
	\;+\;
	x^{2n+1}\sum_{j\ge -2n-1} u_j v\,P_{j,u}(x).
	\end{equation}

For $m\ge0$, coefficient extraction gives
\[
\begin{aligned}
u_{-2n-2-m}v
&=[x^{2n+1+m}]Y(u,x)v\\
&\equiv_{O_n^\circ(u,V)}[x^{2n+1+m}]Y_L(u,x)v\\
&=\sum_{j\ge-2n-1}\bigl([x^m]P_{j,u}(x)\bigr)u_jv\\
&=\sum_{j\ge-2n-1}\left(
\sum_{i=0}^{2n+j+1}\binom{\wt u+n}{i}
\binom{-\wt u-n}{2n+2+m+j-i}\right)u_jv.
\end{aligned}
\]
The congruence follows from Lemma~\ref{lem:Y-tilde-YL}.
The third line uses \eqref{eq:YL-band-tail-split}: its first sum has
degree at most $2n$, and its second contributes the coefficient
$[x^m]P_{j,u}(x)$.  The last line is the first coefficient formula in
\eqref{eq:P-coeff-two-forms}, proving \eqref{eq:tail-general}.
Separating the terms with indices at least $-2n-1$ in $Y(u,x)v$
and substituting \eqref{eq:tail-general} in the remaining sum gives
\eqref{eq:Y-from-tail-general}.
\end{proof}
We record several decompositions of $Y_L(u,x)$ that will be used repeatedly.  The structure of $Y_L(u,x)$ has some notable differences depending on the value of $\wt u +n$.
\medskip

\begin{rem}[Casewise structure of $Y_L(u,x)$]\label{rem:YL-cases}
	Fix \(n\in\N\) and let \(u\in V\) be homogeneous.  By
	Definition~\ref{def:YL} and Lemma~\ref{lem:p-decomposition},
	\begin{equation}\label{Y_L-split-global}
	Y_L(u,x)
	=
	\sum_{j\ge -2n-1}u_jx^{-j-1}
	+
	x^{2n+1}\sum_{j\ge -2n-1}u_jP_{j,u}(x),\quad  	Y_L^-(u,x)=Y^-(u,x)
	\end{equation}
		so the singular part is unchanged and the difference between \(Y_L(u,x)\)
		and \(Y(u,x)\) occurs in the regular part.

	\noindent{\rm(1) Case \(\wt u<-n\).}
	The negative binomial series is not completed by the finite sum defining
	\(p_{j,u}(x)\).  Hence every retained mode \(j\ge -2n-1\) may contribute to
	the correction term in \eqref{Y_L-split-global}; this is the infinite-tail
	case.

	\noindent{\rm(2) Case \(\wt u=-n\).}
	Here \(p_{j,u}(x)=x^{-j-1}\) for all \(j\ge -2n-1\).  Thus \(Y_L\) is obtained
	from \(Y\) by deleting the modes with indices less than $-2n-1$.

	\noindent{\rm(3) Case \(\wt u>-n\).}
	Set
	\[
	j_0:=\wt u-n-1.
	\]
	For \(j\ge j_0\), the finite binomial sum in \(p_{j,u}(x)\) is complete, so
	\(p_{j,u}(x)=x^{-j-1}\).  Therefore
	\begin{equation}\label{eq:YL-3a-p}
	Y_L(u,x)
	=
	\sum_{j=-2n-1}^{j_0-1}u_jp_{j,u}(x)
	+
	\sum_{j\ge j_0}u_jx^{-j-1}.
	\end{equation}
	Within this case we distinguish the two subcases used below.

	\noindent{\rm(3a) Subcase \(-n<\wt u\le n+1\).}
	Then \(j_0\le0\), so all singular modes \(j\ge0\) occur with their usual
	coefficients \(x^{-j-1}\), and only finitely many regular retained modes
	require corrected coefficients.

	\noindent{\rm(3b) Subcase \(\wt u\ge n+2\).}
	Then \(j_0\ge1\), so finitely many singular retained modes
	\(0\le j\le j_0-1\) also occur with corrected coefficients.  Nevertheless
	\eqref{Y_L-split-global} still holds after collecting the full singular part:
	the correction terms lie in the regular component of \(Y_L(u,x)\).
\end{rem}

\begin{prop}[AB specialization]\label{prop:AB-specialization}
	Let $V$ be a vertex operator algebra, fix $n\in\N$, and let $u\in V$ be homogeneous.
	Suppose that $\wt u+n\in\Z_{\ge0}$, and interpret binomial coefficients as in \eqref{eq:binom-coeff}.
	Then, in this weight regime, the general tail recursion \eqref{eq:tail-general} of
	Proposition~\ref{prop:tail-AB1} coincides with the recursion obtained in \cite{AB},
	which we record (in a convenient normalization) as follows.

	\smallskip
	\noindent{\rm AB mode recursion.}
	In the regime $\wt u+n\in\Z_{\ge0}$, for all $m\ge n+1$, \eqref{eq:tail-general} agrees with the recursion found in \cite{AB}
	(for $n+1\le m\le \wt u+2n$ the mode $u_{\wt u-m-1}$ has index at least $-2n-1$ and
	\eqref{recursion-n-equation-general} reduces to $u_{\wt u-m-1}v=u_{\wt u-m-1}v$, so its content is
	the range $m\ge\wt u+2n+1$):
	\begin{equation}\label{recursion-n-equation-general}
	u_{\wt u - m - 1}v
	\ \sim_n\
	(-1)^{m+\wt u}
	\sum_{\ell=1}^{n+\wt u}
	\binom{m-n-1}{\,\ell-1\,}
	\binom{m-n-\ell-1}{\,n+\wt u - \ell\,}
	u_{\wt u - n - \ell - 1} v.
	\end{equation}
	In particular, substituting $m=\wt u+2n+k+1$ ($k\ge0$) yields the convenient deep--tail form
	\begin{equation}\label{recursion-n-equation-2}
	u_{-2n-k-2 }v \ \sim_n\
	(-1)^{k+1}
	\sum_{j = 1}^{n+ \wt u }
	\binom{\wt u + n + k}{j-1}
	\binom{\wt u + n + k-j}{n+\wt u -j }
	u_{\wt u - n - j - 1} v ,
	\; k\ge 0.
	\end{equation}

	\smallskip
	In particular, taking the regular part of $Y(u,x)v=\sum_{r\in\Z}u_r v\,x^{-r-1}$
	recovers the formula in \cite{AB} recorded in \eqref{Y+}:
	\begin{multline}\label{Y+}
	Y^+(u,x)v \sim_n
	\sum_{k=1}^{n - \wt u + 1}  u_{-k} v x^{k-1} \\
	+ \sum_{m\ge \max\{n+1,\wt u\}} \sum_{k = n - \wt u + 2}^{2n + 1} (-1)^{m- \wt u}
	\binom{m-n-1}{k-n + \wt u -2}
	\binom{m-k-\wt u}{2n-k + 1 }
	u_{-k} v x^{-\wt u  + m}.
	\end{multline}
	The first sum is empty when $n-\wt u+1<1$. For $\wt u\ge n+2$,
	the inner sum retains some singular modes, whose coefficients here have only regular powers of $x$.
\end{prop}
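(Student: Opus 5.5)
The plan is to derive \eqref{recursion-n-equation-2} directly from the general tail recursion \eqref{eq:tail-general} of Proposition~\ref{prop:tail-AB1}. Then \eqref{recursion-n-equation-general} follows by reindexing, and \eqref{Y+} by assembling the regular part. Throughout I set $N:=\wt u+n\in\Z_{\ge0}$.

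The first step is to compute the coefficient $c_{m,j}:=[x^m]P_{j,u}(x)$ using the closed form \eqref{eq:P-explicit} rather than the double sum. We have
\[
P_{j,u}(x)=-(1+x)^{-N}\sum_{t=0}^{N-2n-j-2}\binom{N}{2n+j+2+t}x^t .
\]
By Lemma~\ref{lem:p-decomposition}, $P_{j,u}=0$ for $j\ge \wt u-n-1$, so only $-2n-1\le j\le \wt u-n-2$ contribute. Writing $j=\wt u-n-\ell-1$, this range is exactly $1\le\ell\le N$, which matches the summation range in \eqref{recursion-n-equation-2}. In this variable the numerator polynomial is $\sum_{s}\binom{N}{s}x^{s-N+\ell-1}$ restricted to $s\ge N-\ell+1$. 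Equivalently, it is $x^{\ell-1-N}\bigl((1+x)^N-\sum_{s\le N-\ell}\binom Ns x^s\bigr)$, or more cleanly the reversed truncated binomial $\sum_{q=0}^{\ell-1}\binom{N}{q}x^{\ell-1-q}$ after substituting $s=N-q$.

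The second step is to extract $[x^k]$ of
\[
-(1+x)^{-N}\sum_{q=0}^{\ell-1}\binom Nq x^{\ell-1-q}.
\]
This gives
\[
c_{k,\ell}=-\sum_{q=0}^{\ell-1}\binom Nq\binom{-N}{k-\ell+1+q}.
\]
I then have to show this equals $(-1)^{k+1}\binom{N+k}{\ell-1}\binom{N+k-\ell}{N-\ell}$, and I expect this to be the main obstacle. The approach I would try first is to rewrite $\binom{-N}{p}=(-1)^p\binom{N+p-1}{p}$ and prove the identity
\[
\sum_{q=0}^{\ell-1}(-1)^{q}\binom Nq\binom{N+k-\ell+q}{k-\ell+1+q}=(-1)^{\ell-1}\binom{N+k}{\ell-1}\binom{N+k-\ell}{N-\ell}
\]
by induction on $\ell$, using Pascal's rule for the increment. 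The right side is a product of binomials that equals a multinomial $\frac{(N+k)!}{(\ell-1)!\,k!\,(N-\ell)!}\cdot\frac{1}{N+k-\ell+1}$-type ratio, so the inductive step reduces to a rational-function check. An alternative route is a generating-function argument: the partial sum is the coefficient of $y^{\ell-1}$ in $(1-y)^{-1}$ times a hypergeometric convolution, which Chu--Vandermonde closes. If the induction becomes messy, I would verify the identity as a polynomial identity in $N$ of bounded degree, fixing $k$ and $\ell$ and checking it for all integers $N\ge\ell$, which suffices.

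Given the identity, \eqref{recursion-n-equation-2} is exactly \eqref{eq:tail-general}. Substituting $m=\wt u+2n+k+1$ turns $\binom{N+k}{\ell-1}$ into $\binom{m-n-1}{\ell-1}$ and $\binom{N+k-\ell}{N-\ell}$ into $\binom{m-n-\ell-1}{N-\ell}$, with sign $(-1)^{k+1}=(-1)^{m+\wt u}$. This yields \eqref{recursion-n-equation-general} for $m\ge\wt u+2n+1$. For $n+1\le m\le\wt u+2n$ the recursion is trivial, as noted in the statement; there one checks that the right side collapses to the single term with $\ell=m-n$, because the factor $\binom{m-n-\ell-1}{N-\ell}$ vanishes otherwise. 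Finally, \eqref{Y+} is obtained by splitting $Y^+(u,x)v$ into the retained regular modes $u_{-k}$ with $1\le k\le 2n+1$ and the deep modes. The deep modes are replaced via \eqref{eq:Y-from-tail-general}, written with $u_{-k}$ and $k=n-\wt u+1+\ell$. The terms with $\ell\le 0$ remain unchanged, which gives the first sum; the others are collected by the power $x^{m-\wt u}$. This last part is bookkeeping of index ranges.
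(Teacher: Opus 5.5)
Your route is essentially the paper's. Both extract the coefficient of $u_jv$ from $P_{j,u}$, discard $j\ge\wt u-n-1$, reindex by $j=\wt u-n-\ell-1$, and evaluate a truncated Vandermonde sum in closed form. The only difference is that you start from the second expansion in \eqref{eq:P-coeff-two-forms}. Your $c_{k,\ell}=-\sum_{q=0}^{\ell-1}\binom Nq\binom{-N}{k-\ell+1+q}$ equals the paper's partial sum $\sum_{i=0}^{N-\ell}\binom Ni\binom{-N}{N+k+1-\ell-i}$: substitute $i=N-q$, and note that the two differ by the full sum, which is $\binom{0}{N+k+1-\ell}=0$. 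The paper closes that partial sum with $\sum_{i=0}^{p}\binom ai\binom{-a}{T-i}=\frac{(a-p)(T-p)}{aT}\binom ap\binom{-a}{T-p}$, taking $a=N$, $p=N-\ell$, $T=N+k+1-\ell$. It proves this by checking that consecutive differences of the right side equal the summand.

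That evaluation is the step you leave open, and of your suggestions only the induction is an actual argument. It works if you raise $\ell$ and $k$ together. With $k-\ell$ fixed, the left side of your displayed identity gains only the term $(-1)^{\ell}\binom N\ell\binom{N+k}{k+1}$. The inductive step is then exactly the telescoping check above, starting from $\ell=1$ or $k=0$, where the sum has a single term. At fixed $k$, by contrast, every summand shifts. The polynomial-in-$N$ fallback is circular as stated, since checking all integers $N\ge\ell$ is the claim itself.

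One correction is needed in the range $n+1\le m\le\wt u+2n$. For $\ell<m-n$, the factor $\binom{m-n-\ell-1}{N-\ell}$ does vanish, because its top lies in $[0,N-\ell)$. For $\ell>m-n$, however, its top is negative and the factor is generally nonzero. There the vanishing comes instead from $\binom{m-n-1}{\ell-1}$. The surviving $\ell=m-n$ term has coefficient $(-1)^{m+\wt u}\binom{-1}{N-m+n}=1$, as needed.
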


\begin{proof}
Fix $v\in V$ and $k\ge0$.  All congruences are modulo $O_n^\circ(V)$.
If $\wt u+n=0$, \eqref{eq:tail-general} has zero coefficients and
\eqref{recursion-n-equation-2} has an empty sum.  Suppose $\wt u+n\ge1$.
For $j\ge\wt u-n-1$, the coefficient in \eqref{eq:tail-general} is
\[
\sum_{i=0}^{\wt u+n}\binom{\wt u+n}{i}
\binom{-\wt u-n}{2n+2+k+j-i}
=\binom{0}{2n+2+k+j}=0.
\]
For the remaining indices put $j=\wt u-n-\ell-1$,
$1\le\ell\le\wt u+n$.  The coefficient calculation is
\[
\begin{aligned}
&\sum_{i=0}^{\wt u+n-\ell}\binom{\wt u+n}{i}
 \binom{-\wt u-n}{\wt u+n+k+1-\ell-i}\\
&\quad=\frac{\ell(k+1)}{(\wt u+n)(\wt u+n+k+1-\ell)}
 \binom{\wt u+n}{\wt u+n-\ell}\binom{-\wt u-n}{k+1}\\
&\quad=(-1)^{k+1}\binom{\wt u+n+k}{\ell-1}
 \binom{\wt u+n+k-\ell}{\wt u+n-\ell}.
\end{aligned}
\]
The first display is Vandermonde's identity.  The second uses the
finite convolution identity, for integers $a\ge1$, $T>p\ge0$ and $p<a$,
\[
\sum_{i=0}^{p}\binom ai\binom{-a}{T-i}
=\frac{(a-p)(T-p)}{aT}\binom ap\binom{-a}{T-p}.
\]
Indeed, the expression on the right at $p$ minus its value at $p-1$
is $\binom ap\binom{-a}{T-p}$, by cancellation of factorials, and its
value at $p=0$ is $\binom{-a}{T}$; summing these differences proves
the identity.  Apply it with $a=\wt u+n$, $p=\wt u+n-\ell$ and
$T=\wt u+n+k+1-\ell$.  The last equality uses
$\binom{-\wt u-n}{k+1}=(-1)^{k+1}\binom{\wt u+n+k}{k+1}$
and cancels factorials.  Substitution in \eqref{eq:tail-general}
proves \eqref{recursion-n-equation-2}.

Equation~\eqref{recursion-n-equation-general} is the AB recursion
quoted from \cite{AB}; its deep-mode case is
\eqref{recursion-n-equation-2} with $m=\wt u+2n+k+1$.
For the regular-field formula, start from
\[
Y^+(u,x)v=\sum_{k=1}^{n-\wt u+1}u_{-k}v\,x^{k-1}
 +\sum_{k\ge\max\{1,n-\wt u+2\}}u_{-k}v\,x^{k-1}.
\]
In the second sum use $m=\wt u+k-1$ and apply
\eqref{recursion-n-equation-general}.  Reindex its inner sum by
$\ell=k-n+\wt u-1$; then $\ell-1=k-n+\wt u-2$ and
$n+\wt u-\ell=2n-k+1$, giving exactly \eqref{Y+}.
\end{proof}
\begin{rem}[Polynomiality and residue replacement for $Y_L$]
	\label{rem:YL-residue-tautology}\label{rem:YL-tautology}
	Fix \(n\in\N\) and homogeneous \(u\in V\).  By Definition~\ref{def:YL},
	\((1+x)^{\wt u+n}Y_L(u,x)v\) is a Laurent polynomial in \(x\) with no powers above \(x^{2n}\) for every
	\(v\in V\).  Hence, for all \(m\ge k\ge0\),
	\[
	\Res_x\frac{(1+x)^{\wt u+n+k}}{x^{m+2n+2}}Y_L(u,x)v=0.
	\]
	Since \(Y(u,x)\equiv_{O_n^\circ(u,V)}Y_L(u,x)\) by
	Lemma~\ref{lem:Y-tilde-YL}, the defining residue \(uR^n_{m,k}v\) may be
	computed using \(Y_L\) modulo \(O_n^\circ(u,V)\) and therefore vanishes there.
	More generally, if \(Y(w,x)\equiv_U\widehat Y(w,x)\), then
	Definition~\ref{def:Rmk-replacement} applies the same residue operators to
	\(\widehat Y(w,x)\) to produce relations compatible with the chosen truncation
	regime.
\end{rem}

\begin{Exa}[Left truncated Virasoro field for $n=2$]\label{exa:YL-omega-n2}
	Let $V$ be a vertex operator algebra with conformal vector $\omega$.
	Recall
	\[
	Y(\omega,x)=\sum_{m\in\Z}L(m)x^{-m-2},
	\qquad \omega_j=L(j-1),\ \wt\omega=2.
	\]
	Fix $n=2$. Then $\wt\omega+n=4$ and $-n<\wt\omega\le n+1$, so we are in
	case {\rm(3a)} of Remark~\ref{rem:YL-cases}. Hence
	\[
	Y_L(\omega,x)v=\sum_{j\ge -5}\omega_jv\,p_{j,\omega}(x),
	\qquad
	p_{j,\omega}(x)=\frac{1}{(1+x)^4}\sum_{i=0}^{j+5}\binom{4}{i}x^{\,i-j-1}.
	\]

	Since $\binom{4}{i}=0$ for $i>4$, the only $p_{j,\omega}(x)$ different from $x^{-j-1}$ occur for
	$-5\le j\le -2$. It is convenient to record them, together with $j=-1$, in the equivalent form
	$(1+x)^4p_{j,\omega}(x)$:
	\[
	\renewcommand{\arraystretch}{1.15}
	\begin{array}{c|c}
	j & (1+x)^4\,p_{j,\omega}(x)\\ \hline
	-5 & x^4\\
	-4 & x^3+4x^4\\
	-3 & x^2+4x^3+6x^4\\
	-2 & x+4x^2+6x^3+4x^4\\
	-1 & 1+4x+6x^2+4x^3+x^4=(1+x)^4
	\end{array}
	\]
Here $j_0=\wt\omega-n-1=-1$. Thus by \eqref{eq:YL-3a-p} we have
$p_{j,\omega}(x)=x^{-j-1}$ for all $j\ge -1$, and in particular $p_{-1,\omega}(x)=1$.

	Substituting $\omega_j=L(j-1)$ yields, for every $v\in V$,
	\[
	\begin{aligned}
	Y_L(\omega,x)v
	&=
	\frac{1}{(1+x)^4}\Big(
	L(-6)v\,x^4
	+L(-5)v\,(x^3+4x^4)
	+L(-4)v\,(x^2+4x^3+6x^4)\\
	&\hspace{1em}
	+L(-3)v\,(x+4x^2+6x^3+4x^4)
	+L(-2)v\,(1+x)^4
	\Big)
	+\sum_{m\ge-1}L(m)v\,x^{-m-2}.
	\end{aligned}
	\]

	Thus for level $n=2$, the left truncated Virasoro field $Y_L(\omega,x)$ modifies
	only the finitely many modes $L(-6),\dots,L(-3)$; all $L(m)$ with $m\ge-2$ appear
	with their original monomial coefficients $x^{-m-2}$.
\end{Exa}

\section{Calculus for the left truncated field \texorpdfstring{$Y_L$}{YL}}\label{sec:YL-calculus}

Throughout this section we fix $n\in\N$.  For homogeneous $u\in V$, $Y_L(u,x)$ is the left
truncated field of Definition~\ref{def:YL}, extended linearly to all of $V$.  Deep-mode
replacements modulo $O_n^\circ(V)$ are valid only for the leftmost factor of a product (the
operator acting last).  This section records the replacement, derivative, commutator, and
one-step identities for $Y_L$ that respect this restriction.

\subsection{The truncation difference}

For $u\in V$ define the \emph{truncation difference}
\begin{equation}\label{eq:Delta-def}
\Delta(u,x):=Y_L(u,x)-Y(u,x).
\end{equation}

\begin{prop}[Basic properties of $Y_L$]\label{prop:YL-On-truncation}
	Let $u\in V$ be homogeneous.
	\begin{enumerate}[label=\textup{(\arabic*)}]
		\item \textup{(Band and tail)} With $P_{j,u}(x)\in\C[[x]]$ as in \eqref{eq:p-P-decomp},
		\begin{equation}\label{eq:left-level-YL}
		Y_L(u,x)
		=
		\sum_{j=-2n-1}^{-1}u_j\,p_{j,u}(x)
		+\sum_{j\ge0}u_j\bigl(x^{-j-1}+x^{2n+1}P_{j,u}(x)\bigr),
		\end{equation}
		and $p_{j,u}(x)\in\C[[x]]$ for $-2n-1\le j\le-1$.
		\item \textup{(Singular part)} $Y_L^-(u,x)=Y^-(u,x)$, and
		\begin{equation}\label{eq:YL-plus-band-tail}
		Y_L^+(u,x)
		=
		\sum_{j=-2n-1}^{-1}u_j\,p_{j,u}(x)
		+x^{2n+1}\sum_{j\ge0}u_j\,P_{j,u}(x).
		\end{equation}
		\item \textup{(Truncation difference)}
		\[
		\Delta(u,x)\in\Hom\!\bigl(V,O_n^\circ(u,V)\bigr)[[x]].
		\]
		In particular $Y(u,x)\equiv_{O_n^\circ(u,V)}Y_L(u,x)$ and $Y(u,x)\sim_n Y_L(u,x)$.
	\end{enumerate}
\end{prop}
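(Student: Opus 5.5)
The plan is to read all three parts off the definition of $Y_L$ together with the $p$-form decomposition in Lemma~\ref{lem:p-decomposition} and the congruence in Lemma~\ref{lem:Y-tilde-YL}.

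For (1), I split the sum in Definition~\ref{def:YL} into the band $-2n-1\le j\le -1$ and the singular range $j\ge0$. In the singular range I substitute \eqref{eq:p-P-decomp}, namely $p_{j,u}(x)=x^{-j-1}+x^{2n+1}P_{j,u}(x)$. For $-2n-1\le j\le -1$ the exponents $i-j-1$ in \eqref{eq:pj-def} satisfy $i-j-1\ge -j-1\ge0$. Since $(1+x)^{-\wt u-n}\in\C[[x]]$, this gives $p_{j,u}(x)\in\C[[x]]$.

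For (2), I use that the band terms and the terms $x^{2n+1}u_jP_{j,u}(x)$ involve only nonnegative powers of $x$. On the other hand, $u_jx^{-j-1}$ with $j\ge0$ involves only negative powers. Taking the negative-power part therefore returns $\sum_{j\ge0}u_jx^{-j-1}=Y^-(u,x)$, and the nonnegative-power part is exactly \eqref{eq:YL-plus-band-tail}. One caveat is that coefficients must be collected on a fixed input $v$, where only finitely many $u_jv$ with $j\ge0$ are nonzero, so the regrouping is legitimate. The same finiteness is consistent with the remark after \eqref{eq:tail-intro} that the operator coefficients need not have a uniform upper mode bound.

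For (3), I subtract $Y(u,x)=\sum_{j}u_jx^{-j-1}$ from \eqref{eq:left-level-YL}. This gives
\[
\Delta(u,x)=-\sum_{m\ge0}u_{-2n-2-m}\,x^{2n+1+m}+\sum_{j\ge-2n-1}u_j\bigl(p_{j,u}(x)-x^{-j-1}\bigr).
\]
By \eqref{eq:p-P-decomp} this involves only powers $x^{2n+1+m}$ with $m\ge0$, so $\Delta(u,x)\in\Hom(V,V)[[x]]$. The coefficient of $x^{2n+1+m}$ applied to $v$ is
\[
\sum_{j}([x^m]P_{j,u})\,u_jv-u_{-2n-2-m}v.
\]
By Proposition~\ref{prop:tail-AB1}(1), this lies in $O_n^\circ(u,V)$. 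Alternatively, it equals $-(1+x)^{-\wt u-n}$ applied to the generating function of the $C_M(u)v$ with $M\ge 2n+1$, and each such $C_M(u)v=uR^n_{m,0}v$ lies in $O_n^\circ(u,V)$ by Proposition~\ref{prop:Y-full-GF}. The congruences $Y\equiv_{O_n^\circ(u,V)}Y_L$ and $Y\sim_n Y_L$ then follow immediately, as in Lemma~\ref{lem:Y-tilde-YL}.

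The main obstacle is bookkeeping rather than substance. One must check that the infinite sum over $j$, multiplied by power series, is well defined on each $v$ by lower truncation. One must also check that $\Delta$ has no negative or low powers, i.e.\ that the band coefficients of degree at most $2n$ agree exactly. Both follow from \eqref{eq:p-P-decomp}.
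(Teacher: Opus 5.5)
Your proposal is correct and follows the paper's proof essentially step for step. Parts (1) and (2) use the same splitting of Definition~\ref{def:YL} via \eqref{eq:p-P-decomp}. In (3), your appeal to Proposition~\ref{prop:tail-AB1}(1), or to Proposition~\ref{prop:Y-full-GF}, is the coefficientwise form of Lemma~\ref{lem:Y-tilde-YL}; the paper cites that lemma directly after noting $\Delta=Y_L^+-Y^+$. Your direct computation also gives the slightly sharper fact that $\Delta(u,x)$ has no coefficients below $x^{2n+1}$.
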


\begin{proof}
	\textup{(1)} By Definition~\ref{def:YL}, $Y_L(u,x)=\sum_{j\ge-2n-1}u_j\,p_{j,u}(x)$.  For
	$-2n-1\le j\le-1$, every exponent $i-j-1$ in \eqref{eq:pj-def} is nonnegative and
	$(1+x)^{-\wt u-n}\in\C[[x]]$, so $p_{j,u}(x)\in\C[[x]]$.  For $j\ge0$, \eqref{eq:p-P-decomp}
	gives $p_{j,u}(x)=x^{-j-1}+x^{2n+1}P_{j,u}(x)$.

	\textup{(2)} Apply Definition~\ref{plus-minus-split} to \eqref{eq:left-level-YL}.  For each
	$w\in V$ the sum $Y_L(u,x)w$ is finite, since $u_jw=0$ for $j\gg0$.  The only negative powers
	of $x$ in \eqref{eq:left-level-YL} are the terms $u_jx^{-j-1}$ with $j\ge0$, so
	$Y_L^-(u,x)=\sum_{j\ge0}u_jx^{-j-1}=Y^-(u,x)$, and the remaining terms give
	\eqref{eq:YL-plus-band-tail}.

	\textup{(3)} By \textup{(2)}, $\Delta(u,x)=Y_L^+(u,x)-Y^+(u,x)=\sum_{t\ge0}D_tx^t$ with
	$D_t\in\End(V)$.  Lemma~\ref{lem:Y-tilde-YL} gives $\Delta(u,x)v\in O_n^\circ(u,V)((x))$ for
	every $v\in V$, that is, $D_tv\in O_n^\circ(u,V)$ for all $t\ge0$ and $v\in V$.  The last
	statement uses $O_n^\circ(u,V)\subseteq O_n^\circ(V)$.
\end{proof}

\begin{lem}[The coefficients of $\Delta(u,x)$ span $O_n^\circ(u,V)$]\label{lem:defect-spans-Onu}
	Let $u\in V$ be homogeneous.  Then
	\begin{equation}\label{eq:Delta-coeff-span}
	\Span_\C\Bigl\{\,[x^t]\bigl(\Delta(u,x)v\bigr):\ t\in\Z,\ v\in V\,\Bigr\}
	=
	O_n^\circ(u,V).
	\end{equation}
	Consequently, $O_n^\circ(u,V)$ is the smallest subspace $U\subseteq V$ with
	$Y(u,x)\equiv_U Y_L(u,x)$.
\end{lem}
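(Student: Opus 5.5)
Both inclusions in \eqref{eq:Delta-coeff-span} come from the generating-function identity of Proposition~\ref{prop:Y-full-GF}; the only nontrivial point is inverting a unitriangular system. Write $D$ for the left-hand span in \eqref{eq:Delta-coeff-span}.

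The inclusion $D\subseteq O_n^\circ(u,V)$ is immediate from Proposition~\ref{prop:YL-On-truncation}(3). To get the reverse inclusion, I will express $\Delta(u,x)v$ through the coefficients $C_M(u)v$. By \eqref{eq:Y-full-GF-exact} and \eqref{eq:CM-to-pform},
\[
(1+x)^{\wt u+n}\Delta(u,x)v=\sum_{M\le 2n}C_M(u)v\,x^M-\sum_{M\in\Z}C_M(u)v\,x^M=-\sum_{m\ge0}C_{m+2n+1}(u)v\,x^{m+2n+1}.
\]
By Proposition~\ref{prop:Y-full-GF}, $C_{m+2n+1}(u)v=uR^n_{m,0}v$. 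Hence
\[
\Delta(u,x)v=-(1+x)^{-\wt u-n}\sum_{m\ge0}(uR^n_{m,0}v)\,x^{m+2n+1}.
\]
This is a power series whose lowest possible term is $x^{2n+1}$. Its coefficients are
\[
[x^{2n+1+m}]\Delta(u,x)v=-\sum_{s=0}^{m}\binom{-\wt u-n}{m-s}\,uR^n_{s,0}v .
\]

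This is a unitriangular system in the generators $uR^n_{s,0}v$: the diagonal coefficient is $-\binom{-\wt u-n}{0}=-1$. Inverting it, either by induction on $m$ or by multiplying the displayed identity by $(1+x)^{\wt u+n}$ and reading off coefficients, gives
\[
uR^n_{m,0}v=-[x^{2n+1+m}]\bigl((1+x)^{\wt u+n}\Delta(u,x)v\bigr)=-\sum_{s=0}^{m}\binom{\wt u+n}{m-s}[x^{2n+1+s}]\Delta(u,x)v\in D .
\]
By Lemma~\ref{Zhu-lemma}, the elements $uR^n_{m,0}v$ with $m\in\N$, $v\in V$ span $O_n^\circ(u,V)$. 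Therefore $O_n^\circ(u,V)\subseteq D$, and equality holds.

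For the minimality statement, suppose $Y(u,x)\equiv_U Y_L(u,x)$. By Definition~\ref{def:W-congruence} every coefficient of $\Delta(u,x)v$ lies in $U$, so $D\subseteq U$, and $D=O_n^\circ(u,V)$ by the first part. Moreover $O_n^\circ(u,V)$ itself satisfies the congruence by Lemma~\ref{lem:Y-tilde-YL}. Hence it is the smallest such $U$.

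The only delicate step is the bookkeeping in the formal series. The coefficients must be finite sums, which holds because $(1+x)^{\pm(\wt u+n)}\in\C[[x]]$ and the series in $x$ is bounded below. Multiplying the congruence by $(1+x)^{\wt u+n}$ is legitimate by Definition~\ref{def:W-congruence}.
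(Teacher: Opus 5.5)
Your proof is correct and follows essentially the same route as the paper. Both rest on the identity $(1+x)^{\wt u+n}\Delta(u,x)v=-\sum_{m\ge0}(uR^n_{m,0}v)\,x^{m+2n+1}$ and on the fact that multiplying by $(1+x)^{\pm(\wt u+n)}$ preserves coefficient spans; you make this explicit as a unitriangular inversion before invoking Lemma~\ref{Zhu-lemma}, and your minimality argument matches the paper's, with the harmless extra citation of Lemma~\ref{lem:Y-tilde-YL} to confirm that $O_n^\circ(u,V)$ itself satisfies the congruence.
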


\begin{proof}
By \eqref{eq:Delta-def}, Proposition~\ref{prop:Y-full-GF},
\eqref{eq:CM-to-pform}, and \eqref{eq:Rmk-def}, for every $v\in V$,
\[
\begin{aligned}
(1+x)^{\wt u+n}\Delta(u,x)v
&=-\sum_{M\ge2n+1}C_M(u)v\,x^M\\
&=-\sum_{m\ge0}(uR^n_{m,0}v)x^{m+2n+1}.
\end{aligned}
\]
Multiplication by $(1+x)^{\wt u+n}$ preserves the span of the
coefficients: each new coefficient is a finite linear combination of
the original ones, and multiplication by $(1+x)^{-\wt u-n}$ proves
the reverse inclusion.  Thus the coefficients of $\Delta(u,x)v$,
as $v$ varies, span exactly the $k=0$ residues, which span
$O_n^\circ(u,V)$ by Lemma~\ref{Zhu-lemma}.
Finally, $Y(u,x)\equiv_UY_L(u,x)$ means that all these coefficients
belong to $U$ (Definition~\ref{def:W-congruence}(3)), proving minimality.
\end{proof}

\begin{lem}[Leftmost replacement]\label{cor:left-replacement-YL}
	Let $u_1\in V$ be homogeneous and let $u_2,\dots,u_r,v\in V$.  Then
	\begin{equation}\label{eq:leftmost-YL}
	Y(u_1,x_1)\,Y(u_2,x_2)\cdots Y(u_r,x_r)v
	\;\equiv_{\,O_n^\circ(u_1,V)\,}\;
	Y_L(u_1,x_1)\,Y(u_2,x_2)\cdots Y(u_r,x_r)v
	\end{equation}
	in $V((x_1))\cdots((x_r))$.  In particular, the two sides are congruent modulo
	$O_n^\circ(V)$.
\end{lem}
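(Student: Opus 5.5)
The plan is to reduce the statement to Proposition~\ref{prop:YL-On-truncation}(3) by fixing all inner variables and expanding coefficientwise. Set $w:=Y(u_2,x_2)\cdots Y(u_r,x_r)v\in V((x_2))\cdots((x_r))$ and write $w=\sum_{\mathbf{k}} w_{\mathbf{k}}\,x_2^{k_2}\cdots x_r^{k_r}$ with $w_{\mathbf{k}}\in V$. The support is iterated-Laurent: lower truncation in $x_r$ first, then in $x_{r-1}$ for each fixed $k_r$, and so on. The difference of the two sides of \eqref{eq:leftmost-YL} is $\Delta(u_1,x_1)w$, with $\Delta$ as in \eqref{eq:Delta-def}. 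We must show that every coefficient of this series lies in $O_n^\circ(u_1,V)$, in the sense of Definition~\ref{def:W-congruence}(2).

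First, we check that $\Delta(u_1,x_1)w$ is a well-defined element of $V((x_1))((x_2))\cdots((x_r))$. For this we read the ordering of variables as in the statement, so that operators are applied coefficientwise in the inner variables. Both $Y(u_1,x_1)$ and $Y_L(u_1,x_1)$ send each $w_{\mathbf{k}}$ into $V((x_1))$. For $Y_L$ this holds because, for fixed input, only finitely many $u_j$ with $j\ge-2n-1$ act nontrivially, and each $p_{j,u_1}(x_1)\in\C((x_1))$ has order bounded below by $\min(-j-1,0)$. Hence
\[
\Delta(u_1,x_1)w=\sum_{\mathbf{k}}\bigl(\Delta(u_1,x_1)w_{\mathbf{k}}\bigr)\,x_2^{k_2}\cdots x_r^{k_r},
\]
and each coefficient of $x_1^{t}x_2^{k_2}\cdots x_r^{k_r}$ equals $[x_1^t]\bigl(\Delta(u_1,x_1)w_{\mathbf{k}}\bigr)$. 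No infinite sums in $V$ arise, since each coefficient comes from exactly one $w_{\mathbf{k}}$.

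Second, Proposition~\ref{prop:YL-On-truncation}(3) gives $\Delta(u_1,x_1)\in\Hom(V,O_n^\circ(u_1,V))[[x_1]]$, that is, $[x_1^t]\Delta(u_1,x_1)w_{\mathbf{k}}\in O_n^\circ(u_1,V)$ for every $\mathbf{k}$ and $t$. Hence every coefficient of the difference lies in $O_n^\circ(u_1,V)$, which proves \eqref{eq:leftmost-YL}. The final assertion then follows from $O_n^\circ(u_1,V)\subseteq O_n^\circ(V)$ (Definition~\ref{def:Onu-subspace}).

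The only delicate point is the first step. We must make sure that the leftmost position is exactly where the congruence is used: the vectors $w_{\mathbf{k}}$ are arbitrary elements of $V$, and the congruence is applied only to the outermost operator acting on them. We must also confirm that the coefficient extraction involves finite sums for each monomial, so that membership in the subspace is preserved. If $u_1$ were not the leftmost factor, an outer operator would act on elements of $O_n^\circ(u_1,V)$, which need not preserve that subspace. That is why the statement is restricted to leftmost replacement, and I would note this in a remark after the proof.
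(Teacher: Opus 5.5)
Your proposal is correct and follows the paper's proof. You write the difference as $\Delta(u_1,x_1)w$ with $w=Y(u_2,x_2)\cdots Y(u_r,x_r)v$ and apply Proposition~\ref{prop:YL-On-truncation}(3) to each coefficient of $w$. Your additional observation, that each coefficient of the difference comes from a single coefficient $w_{\mathbf{k}}$ so that no infinite sums in $V$ occur, is a detail the paper leaves implicit.
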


\begin{proof}
	Set $w:=Y(u_2,x_2)\cdots Y(u_r,x_r)v\in V((x_2))\cdots((x_r))$.  By \eqref{eq:Delta-def}, the
	difference of the two sides of \eqref{eq:leftmost-YL} is $\Delta(u_1,x_1)w$.  Each coefficient
	$c\in V$ of $w$ contributes $\Delta(u_1,x_1)c$, whose coefficients lie in $O_n^\circ(u_1,V)$
	by Proposition~\ref{prop:YL-On-truncation}\textup{(3)}.
\end{proof}

\begin{rem}[Leftmost versus inner replacement]\label{rem:leftmost-not-all}
	Replacing an inner factor $Y(u_k,x_k)$, $k\ge2$, produces
	$Y(u_1,x_1)\cdots\Delta(u_k,x_k)\cdots Y(u_r,x_r)v$.  The coefficients of
	$\Delta(u_k,x_k)\cdots Y(u_r,x_r)v$ lie in $O_n^\circ(u_k,V)$, but the modes of
	$u_1,\dots,u_{k-1}$ need not preserve $O_n^\circ(V)$, so there is no reason for this term to
	lie in $O_n^\circ(V)((x_1))\cdots((x_r))$.  We never replace a non-leftmost factor.
\end{rem}

\begin{lem}[$L(-1)$--descendants]\label{lem:YL-descendants}
	Let $u\in V$ be homogeneous and let $m\ge1$.
	\begin{enumerate}[label=\textup{(\alph*)}]
		\item For the untruncated field of the descendant,
		\begin{equation}\label{eq:Y-descendants-via-YL-derivative}
		Y(u_{-m}\mathbf{1},x)
		\;\equiv_{\,O_n^\circ(u,V)\,}\;
		\frac{1}{(m-1)!}\,\frac{d^{\,m-1}}{dx^{\,m-1}}Y_L(u,x).
		\end{equation}
		\item Let $Y_L(u_{-m}\mathbf1,x)$ denote the left truncated field
		(Definition~\ref{def:YL}) of the homogeneous vector $u_{-m}\mathbf1$, not the derivative
		field of Convention~\ref{conv:descendant-derivative}.  Then
		\begin{equation}\label{eq:YL-descendants-derivative}
		Y_L(u_{-m}\mathbf{1},x)
		\;\equiv_{\,O_n^\circ(u,V)\,}\;
		\frac{1}{(m-1)!}\,\frac{d^{\,m-1}}{dx^{\,m-1}}Y_L(u,x).
		\end{equation}
		\item With $p_{j,m,u}(x)$ as in \eqref{eq:pjmu-def},
		\begin{equation}\label{eq:YL-descendants-p}
		\frac{1}{(m-1)!}\,\frac{d^{\,m-1}}{dx^{\,m-1}}Y_L(u,x)
		=
		\sum_{j\ge -2n-1} u_j\,p_{j,m,u}(x).
		\end{equation}
	\end{enumerate}
\end{lem}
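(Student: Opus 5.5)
Part (c) is a direct computation, so we prove it first. By Definition~\ref{def:YL}, $Y_L(u,x)=\sum_{j\ge-2n-1}u_j\,p_{j,u}(x)$, and on each input $v$ only finitely many terms with $j\ge 0$ are nonzero. Hence we may differentiate term by term in $x$. Applying $\frac{1}{(m-1)!}\frac{d^{m-1}}{dx^{m-1}}$ to each coefficient series gives $p_{j,m,u}(x)$ by \eqref{eq:pjmu-def}, which is \eqref{eq:YL-descendants-p}.

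For (a), we start from $Y(u_{-m}\mathbf1,x)=\frac{1}{(m-1)!}\frac{d^{m-1}}{dx^{m-1}}Y(u,x)$. This follows from the creation property $u_{-m}\mathbf1=\frac{1}{(m-1)!}L(-1)^{m-1}u$ together with the $L(-1)$-derivative property, as already used in Lemma~\ref{lem:ur1-Rmk-change-of-basis}. It then suffices to know that $\frac{d^{m-1}}{dx^{m-1}}\Delta(u,x)v$ has all coefficients in $O_n^\circ(u,V)$. By Proposition~\ref{prop:YL-On-truncation}(3), $\Delta(u,x)v=\sum_t D_tv\,x^t$ with every $D_tv\in O_n^\circ(u,V)$. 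Differentiation only multiplies each coefficient by an integer and shifts the exponent, so the congruence of Definition~\ref{def:W-congruence} is preserved.

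For (b), set $w=u_{-m}\mathbf1$, which is homogeneous of weight $\wt u+m-1$. Lemma~\ref{lem:Y-tilde-YL} applied to $w$ gives $Y_L(w,x)\equiv_{O_n^\circ(w,V)}Y(w,x)$. We then need the inclusion $O_n^\circ(w,V)\subseteq O_n^\circ(u,V)$, and there are two ways to get it. The first is to iterate Lemma~\ref{lem:Onu-Lminus1-inclusion}, since $w$ is a nonzero scalar multiple of $L(-1)^{m-1}u$ and $O_n^\circ(cu',V)=O_n^\circ(u',V)$ for scalars $c\neq 0$. Alternatively, Lemma~\ref{lem:ur1-Rmk-change-of-basis} expresses each $wR^n_{m',k}v$ directly as a combination of elements $uR^n_{m'',k}v$. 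The case $c=0$, that is $w=0$, is trivial: then $O_n^\circ(w,V)=0$, both fields of $w$ vanish, and the corresponding derivative of $Y(u,x)$ is $0$, so it is still congruent to the derivative of $Y_L(u,x)$ by (a). Combining the inclusion with (a) gives $Y_L(w,x)\equiv Y(w,x)\equiv\frac{1}{(m-1)!}\partial_x^{m-1}Y_L(u,x)$ modulo $O_n^\circ(u,V)$, which is \eqref{eq:YL-descendants-derivative}.

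The only subtle points are the term-by-term differentiation of $Y_L(u,x)v$, which the finiteness on each input $v$ handles, and the inclusion of subspaces in (b). Both are routine given the earlier results, so no step should pose a real obstacle. The main thing to be careful about is distinguishing the left truncated field of the descendant $u_{-m}\mathbf1$ from the derivative field, since the two agree only modulo $O_n^\circ(u,V)$ and not as exact series.
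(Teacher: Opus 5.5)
Your proposal is correct and follows the paper's proof essentially step for step. Part (a) uses $Y(u_{-m}\mathbf1,x)=\frac{1}{(m-1)!}\partial_x^{m-1}Y(u,x)$ together with the coefficientwise control of $\partial_x^{m-1}\Delta(u,x)$ from Proposition~\ref{prop:YL-On-truncation}(3). Part (b) applies that proposition to $u_{-m}\mathbf1$ and then uses $m-1$ applications of Lemma~\ref{lem:Onu-Lminus1-inclusion}; your alternative via Lemma~\ref{lem:ur1-Rmk-change-of-basis} is also valid. Part (c) is termwise differentiation of the finite sum on each input. One small correction: the degenerate case is $L(-1)^{m-1}u=0$, not a vanishing scalar, since $1/(m-1)!\neq0$. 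Your handling of $w=0$ is otherwise fine, and the general argument already covers it.
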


\begin{proof}
	\textup{(a)} By the creation and translation properties,
	$u_{-m}\mathbf1=\frac{1}{(m-1)!}L(-1)^{m-1}u$, and the $L(-1)$--derivative property gives
	$Y(u_{-m}\mathbf1,x)=\frac{1}{(m-1)!}\frac{d^{m-1}}{dx^{m-1}}Y(u,x)$.  Hence
	\[
	Y(u_{-m}\mathbf1,x)-\frac{1}{(m-1)!}\frac{d^{\,m-1}}{dx^{\,m-1}}Y_L(u,x)
	=
	-\frac{1}{(m-1)!}\frac{d^{\,m-1}}{dx^{\,m-1}}\Delta(u,x).
	\]
	Applied to $v\in V$, each coefficient of the right side is a scalar multiple of a coefficient of
	$\Delta(u,x)v$, which lies in $O_n^\circ(u,V)$ by
	Proposition~\ref{prop:YL-On-truncation}\textup{(3)}.

	\textup{(b)} By Proposition~\ref{prop:YL-On-truncation}\textup{(3)} applied to the homogeneous
	vector $u_{-m}\mathbf1$,
	\[
	Y_L(u_{-m}\mathbf1,x)\equiv_{O_n^\circ(u_{-m}\mathbf1,V)}Y(u_{-m}\mathbf1,x).
	\]
	Since $u_{-m}\mathbf1=\frac{1}{(m-1)!}L(-1)^{m-1}u$ and
	$O_n^\circ(\cdot,V)$ is unchanged when its first argument is multiplied by a nonzero scalar,
	$m-1$ applications of Lemma~\ref{lem:Onu-Lminus1-inclusion} give
	$O_n^\circ(u_{-m}\mathbf1,V)\subseteq O_n^\circ(u,V)$.  Combine with \textup{(a)}.

	\textup{(c)} For each $v\in V$ the sum $Y_L(u,x)v=\sum_{j\ge-2n-1}u_jv\,p_{j,u}(x)$ is finite,
	so it may be differentiated termwise, and \eqref{eq:pjmu-def} gives \eqref{eq:YL-descendants-p}.
\end{proof}

The congruence \eqref{eq:YL-descendants-derivative} compares the two possible meanings of
$Y_L(u_{-r}\mathbf1,x)$ only when this field acts last.  In iterated products we fix one of them.

\begin{conv}[Vacuum descendants in products]\label{conv:descendant-derivative}
	Let $u\in V$ be homogeneous and let $r\ge1$.  From now on, in every product of fields (in
	particular in Lemma~\ref{lem:one-step-general}, Proposition~\ref{prop:one-sided-iterate},
	and Section~\ref{sec:finite-length}), the symbol $Y_L(u_{-r}\mathbf1,x)$ denotes the
	derivative field
	\begin{equation}\label{eq:descendant-derivative-convention}
	Y_L(u_{-r}\mathbf1,x):=\frac{1}{(r-1)!}\,\partial_x^{\,r-1}Y_L(u,x),
	\end{equation}
	and $Y_L^{\pm}(u_{-r}\mathbf1,x)$ denote its regular and singular parts.  For $r=1$ this is
	$Y_L(u,x)$.  This is a choice of notation, not a congruence.  When $Y_L(u_{-r}\mathbf1,x)$ is
	the leftmost factor, the derivative field and the truncated field of the vector
	$u_{-r}\mathbf1$ are congruent modulo $O_n^\circ(u,V)$ by
	\eqref{eq:YL-descendants-derivative}; when it is not the leftmost factor, the two products need
	not be congruent, and only the derivative field is meant.
\end{conv}

Since $\partial_x$ maps $\C[[x]]$ to itself and maps $x^{-j-1}$ ($j\ge0$) to a multiple of
$x^{-j-2}$, it commutes with taking regular and singular parts:
$\bigl(\partial_x^{r-1}F(x)\bigr)^\pm=\partial_x^{r-1}F^\pm(x)$.  In particular, for the
derivative field of Convention~\ref{conv:descendant-derivative},
$Y_L^\pm(u_{-r}\mathbf1,x)=\frac{1}{(r-1)!}\partial_x^{r-1}Y_L^\pm(u,x)$ and
$Y_L^-(u_{-r}\mathbf1,x)=Y^-(u_{-r}\mathbf1,x)$.

\subsection{Products and commutators with the regular part of \texorpdfstring{$Y_L$}{YL}}

\begin{lem}[Products with $Y_L$]\label{lem:Yprime-plus-product}
	\begin{enumerate}[label=\textup{(\alph*)}]
		\item Let $b_1,\dots,b_k\in V$ be homogeneous, and let each $G_\ell(x)$ be one of
		$Y_L(b_\ell,x)$, $Y_L^{+}(b_\ell,x)$, $Y_L^{-}(b_\ell,x)$, or $\frac{1}{(r-1)!}\partial_x^{r-1}$
		applied to one of these, for some $r\ge1$.  Then for every $z\in V$, the vector
		$G_1(x)\cdots G_k(x)z$ is a finite sum of terms $f(x)y$ with $f(x)\in\C((x))$ and $y\in V$.
		Consequently $Y(a,x)G_1(x)\cdots G_k(x)z\in V((x))$ for every $a\in V$.
		\item Let $u\in V$ be homogeneous, $r\ge1$, and
		$G(x):=\frac{1}{(r-1)!}\partial_x^{r-1}Y_L^+(u,x)$.  For all $v,w\in V$,
		\[
		Y(v,x)\,G(x)\,w\in V((x)),
		\qquad
		G(x)\,Y(v,x)\,w\in V((x)).
		\]
		\item With $u,r,G$ as in \textup{(b)}, the products $G(x_1)Y(v,x_2)w$ and $Y(v,x_2)G(x_1)w$
		lie in $V[[x_1]]((x_2))$.  Setting $x_1=x_2=x$ is a well-defined linear map
		$V[[x_1]]((x_2))\to V((x))$; it sends these two products to $G(x)Y(v,x)w$ and $Y(v,x)G(x)w$,
		and it is compatible with multiplication by elements of $\C[[x_1]]((x_2))$.
	\end{enumerate}
\end{lem}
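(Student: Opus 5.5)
Part (a) rests on lower truncation together with finiteness of each $Y_L$ on a fixed vector. I will argue by induction on $k$, working from the right. For homogeneous $b$ and any $y\in V$, Definition~\ref{def:YL} gives
\[
Y_L(b,x)y=\sum_{j\ge-2n-1}b_jy\,p_{j,b}(x).
\]
Only finitely many $j$ contribute, since $b_jy=0$ for $j\gg0$, and each $p_{j,b}(x)\in\C((x))$. So $Y_L(b,x)y$ is a finite sum of terms $f(x)y'$. The regular and singular parts are sub-sums of this finite expression. By Proposition~\ref{prop:YL-On-truncation}(2), $Y_L^-(b,x)y=\sum_{j\ge0}b_jy\,x^{-j-1}$ and $Y_L^+(b,x)y$ is the rest, so each is again a finite sum of terms $f(x)y'$. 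Applying $\frac1{(r-1)!}\partial_x^{r-1}$ acts only on the scalar coefficients and keeps them in $\C((x))$.

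For the inductive step, apply $G_{\ell}$ to each term $f(x)y$. Scalars pull out, so the result is a finite sum of products $f(x)g(x)y'$, and $\C((x))$ is closed under multiplication. Finally, $Y(a,x)\sum f_i(x)y_i=\sum f_i(x)Y(a,x)y_i$ is a finite sum of elements of $V((x))$, hence lies in $V((x))$.

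Part (b) follows from (a). The first product $Y(v,x)G(x)w$ is the last assertion of (a) with $k=1$ and $G_1=\frac1{(r-1)!}\partial_x^{r-1}Y_L^+(u,x)$. For the second product, write $Y(v,x)w=\sum_{s\ge s_0}c_sx^{s}$, which is lower truncated. The key point is that $G(x)$ has only nonnegative powers of $x$. Indeed, by \eqref{eq:YL-plus-band-tail},
\[
G(x)=\sum_{j}u_j\,g_j(x),\qquad g_j(x)\in\C[[x]],
\]
because derivatives of power series stay power series. Moreover, for each fixed coefficient $c_s$ only finitely many $j$ act nontrivially. Hence the coefficient of $x^N$ in $G(x)Y(v,x)w$ is
\[
\sum_{s_0\le s\le N}\ \sum_j u_jc_s\,[x^{N-s}]g_j(x),
\]
which is a finite sum, and every resulting power is at least $s_0$. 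So the product lies in $V((x))$. This coefficientwise finiteness is the delicate point: the regularity of $G$ is exactly what prevents infinite sums. Without it, for example with $Y_L^-$ or with $Y$ itself, the analogous sum could be infinite.

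Part (c) follows the same pattern. For $G(x_1)Y(v,x_2)w$, expand $Y(v,x_2)w=\sum_s c_sx_2^s$. Each $G(x_1)c_s$ lies in $V[[x_1]]$ and is a finite sum $\sum_j u_jc_s\,g_j(x_1)$. The $x_2$-exponents are bounded below, so the product lies in $V[[x_1]]((x_2))$. For $Y(v,x_2)G(x_1)w$, write $G(x_1)w=\sum_i f_i(x_1)y_i$ as a finite sum with $f_i\in\C[[x_1]]$. Then the product is $\sum_i f_i(x_1)Y(v,x_2)y_i$, again in $V[[x_1]]((x_2))$.

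For the substitution map, take $F=\sum_{a\ge0,\,b\ge b_0}F_{a,b}x_1^ax_2^b$. Its image has $x^N$-coefficient $\sum_{a+b=N}F_{a,b}$. This is a finite sum because $a\ge0$ and $b\ge b_0$ force $b_0\le b\le N$, and $N\ge b_0$, so the image lies in $V((x))$. Linearity is clear. Compatibility with multiplication by $h\in\C[[x_1]]((x_2))$ is the statement that $x_1,x_2\mapsto x$ is a ring homomorphism $\C[[x_1]]((x_2))\to\C((x))$ compatible with the module structure. One checks this on coefficients, where all sums are finite by the same bounds.

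It remains to identify the images with the one-variable products of (b). Substituting into $G(x_1)Y(v,x_2)w$ gives the $x^N$-coefficient $\sum_{a+b=N}\sum_j[x^a]g_j\,u_jc_b$, which equals the coefficient computed in (b). The second product is handled the same way via the finite sum over $i$.
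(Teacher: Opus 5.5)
Your proposal is correct and takes essentially the same route as the paper. For (a) you use finiteness of $Y_L(b,x)y$ with scalar coefficients in $\mathbb{C}((x))$, together with induction from the right. For (b) you use the fact that $G(x)$ has only nonnegative powers, so each coefficient of $G(x)Y(v,x)w$ is a finite sum. For (c) you use the bound $b_0\le b\le N$ on coefficients in $V[[x_1]]((x_2))$ to get well-definedness and multiplicativity of the substitution. The only cosmetic difference is in (b): the paper writes $G(x)=\sum_{t\ge0}A_tx^t$ with $A_t\in\End(V)$, while you keep the mode form $\sum_j u_j g_j(x)$, and both give the same finite double sum.
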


\begin{proof}
	\textup{(a)} By Definition~\ref{def:YL} and Proposition~\ref{prop:YL-On-truncation}, each $G(x)$
	of the listed types has the form $\sum_{j\ge-2n-1}b_j\,g_j(x)$, where $g_j(x)\in\C((x))$ is
	$p_{j,b}(x)$, its regular or singular part, or a derivative of one of these.  For $y\in V$
	only the finitely many $j\ge-2n-1$ with $b_jy\ne0$ contribute, so
	$G(x)\bigl(f(x)y\bigr)=\sum_jf(x)g_j(x)\,b_jy$ is a finite sum of the required form.  Induction
	on $k$, starting from $G_k(x)z$, proves the first claim, and then
	$Y(a,x)\sum_if_i(x)y_i=\sum_if_i(x)Y(a,x)y_i\in V((x))$.

	\textup{(b)} The first product is \textup{(a)} with $k=1$.  For the second, write
	$Y(v,x)w=\sum_{p\ge P}c_px^p$ and $G(x)=\sum_{t\ge0}A_tx^t$ with $A_t\in\End(V)$; only
	nonnegative powers occur by \eqref{eq:YL-plus-band-tail}.  The coefficient of $x^N$ in
	$\sum_{t\ge0}\sum_{p\ge P}A_tc_p\,x^{t+p}$ is the finite sum over $0\le t\le N-P$, and it
	vanishes for $N<P$.

	\textup{(c)} By \textup{(a)}, $G(x_1)w=\sum_jg_j(x_1)u_jw$ is a finite sum with
	$g_j(x_1)\in\C[[x_1]]$, so
	\[
	Y(v,x_2)G(x_1)w=\sum_jg_j(x_1)Y(v,x_2)u_jw\in V[[x_1]]((x_2)).
	\]
	With $Y(v,x_2)w=\sum_{p\ge P}c_px_2^p$, each $G(x_1)c_p$ lies in $V[[x_1]]$, so
	\[
	G(x_1)Y(v,x_2)w=\sum_{p\ge P}\bigl(G(x_1)c_p\bigr)x_2^p\in V[[x_1]]((x_2)).
	\]
	For
	$\sum_{a\ge0,\,b\ge B}c_{a,b}x_1^ax_2^b$, the coefficient of $x^N$ after the substitution is
	$\sum_{a+b=N}c_{a,b}$, a finite sum over $0\le a\le N-B$, which vanishes for $N<B$.  Hence the
	substitution is well defined and linear.  If $f\in\C[[x_1]]((x_2))$ and
	$g\in V[[x_1]]((x_2))$, then the coefficient of $x^N$ in the substitution of $fg$ and in
	$f(x,x)g(x,x)$ are both $\sum f_{a,b}g_{a',b'}$ over $a+a'+b+b'=N$, a finite sum since
	$a,a'\ge0$ and $b,b'$ are bounded below; so the substitution is multiplicative.  Comparing coefficients with \textup{(b)} identifies the
	images of the two products.
\end{proof}

We use the commutator formula \cite{LL}
\begin{equation}\label{eq:LL-comm}
\bigl[\,u^{(1)}_{\ell},Y(u^{(2)},x)\,\bigr]
=
\sum_{i\ge0}\binom{\ell}{i}x^{\ell-i}Y(u^{(1)}_iu^{(2)},x),
\qquad \ell\in\Z,
\end{equation}
where the sum is finite because $u^{(1)}_iu^{(2)}=0$ for $i\gg0$.  For $\ell\ge0$ the terms with
$i>\ell$ vanish.

For homogeneous $u\in V$, $i\ge0$, and $r\ge1$, define the \emph{regular commutator kernels}
\begin{equation}\label{eq:Fiu-plus-def}
F^+_{i,u}(x_1,x_2)
:=
\sum_{j=-2n-1}^{-1}\binom{j}{i}\,p_{j,u}(x_1)\,x_2^{\,j-i}
\;+\;
x_1^{2n+1}\sum_{j\ge0}\binom{j}{i}\,P_{j,u}(x_1)\,x_2^{\,j-i},
\end{equation}
\begin{equation}\label{eq:Fium-def}
F^+_{i,r,u}(x_1,x_2)
:=
\frac{1}{(r-1)!}\,\partial_{x_1}^{\,r-1} F^+_{i,u}(x_1,x_2).
\end{equation}
In the second sum of \eqref{eq:Fiu-plus-def} only $j\ge i$ contribute, so
$F^+_{i,u}(x_1,x_2)\in\C[[x_1]]((x_2))$.  For $j\ge0$, \eqref{eq:p-P-decomp} gives
$x^{2n+1}P_{j,u}(x)=p_{j,u}(x)-x^{-j-1}$.  Applying $\frac{1}{(r-1)!}\partial_x^{r-1}$ and using $\frac{1}{(r-1)!}\partial_x^{r-1}x^{-j-1}=\binom{-j-1}{r-1}x^{-j-r}$
and \eqref{eq:pm-Pm-decomp} gives
$\frac{1}{(r-1)!}\partial_x^{r-1}\bigl(x^{2n+1}P_{j,u}(x)\bigr)=x^{2n+2-r}P_{j,r,u}(x)$.  Hence
\begin{equation}\label{eq:Fium-explicit}
F^+_{i,r,u}(x_1,x_2)
=
\sum_{j=-2n-1}^{-1}\binom{j}{i}\,p_{j,r,u}(x_1)\,x_2^{\,j-i}
\;+\;
x_1^{2n+2-r}\sum_{j\ge0}\binom{j}{i}\,P_{j,r,u}(x_1)\,x_2^{\,j-i},
\end{equation}
with $p_{j,r,u}$ and $P_{j,r,u}$ as in Lemma~\ref{lem:p-higher-derivatives}, and
$F^+_{i,1,u}=F^+_{i,u}$.  Since $\partial_{x_1}$ preserves $\C[[x_1]]((x_2))$, also
$F^+_{i,r,u}(x_1,x_2)\in\C[[x_1]]((x_2))$; in particular, for $r>2n+2$ the product
$x_1^{2n+2-r}P_{j,r,u}(x_1)$ is a power series, being a derivative of one.

\begin{lem}[Commutators with the regular part of $Y_L$]\label{lem:comm-Yprime-plus}\label{lem:comm-Yprime-plus-deriv}
Let $u,v\in V$ be homogeneous and let $r\ge1$.  Then in
$\Hom\!\bigl(V,V[[x_1]]((x_2))\bigr)$,
\begin{align}
\biggl[\frac{\partial_{x_1}^{r-1}Y_L^+(u,x_1)}{(r-1)!},Y(v,x_2)\biggr]
&=\sum_{i\ge0}F^+_{i,r,u}(x_1,x_2)Y(u_iv,x_2),
\label{eq:comm-Yprime-plus-deriv-FY}\\
&\equiv_{\sum_{i\ge0}O_n^\circ(u_iv,V)}
\sum_{i\ge0}F^+_{i,r,u}(x_1,x_2)Y_L(u_iv,x_2).
\label{eq:comm-Yprime-plus-deriv-F}
\end{align}
After setting $x_1=x_2=x$, in $\Hom(V,V((x)))$,
\begin{align}
\biggl[\frac{\partial_x^{r-1}Y_L^+(u,x)}{(r-1)!},Y(v,x)\biggr]
&=\sum_{i\ge0}F^+_{i,r,u}(x,x)Y(u_iv,x),
\label{eq:comm-Yprime-plus-deriv-diagonal-FY}\\
&\equiv_{\sum_{i\ge0}O_n^\circ(u_iv,V)}
\sum_{i\ge0}F^+_{i,r,u}(x,x)Y_L(u_iv,x).
\label{eq:comm-Yprime-plus-deriv-diagonal}
\end{align}
\end{lem}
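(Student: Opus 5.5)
The plan is to handle the case $r=1$ first and then obtain general $r$ by applying $\frac{1}{(r-1)!}\partial_{x_1}^{r-1}$, which commutes with the commutator in $x_2$. By \eqref{eq:YL-plus-band-tail}, $Y_L^+(u,x_1)=\sum_{j\ge-2n-1}u_j\,g_j(x_1)$ with scalar coefficients
\[
g_j=p_{j,u}\ \ (-2n-1\le j\le -1),\qquad g_j=x_1^{2n+1}P_{j,u}\ \ (j\ge0),
\]
all lying in $\C[[x_1]]$. The commutator $[Y_L^+(u,x_1),Y(v,x_2)]$ is therefore $\sum_j g_j(x_1)[u_j,Y(v,x_2)]$. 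Applied to $w\in V$, each of the two products lies in $V[[x_1]]((x_2))$ by Lemma~\ref{lem:Yprime-plus-product}(c). The termwise expansion must also be justified, since infinitely many $j$ occur. Under $Y(v,x_2)$ on the left, only finitely many $u_jw\ne0$ contribute. For $Y_L^+(u,x_1)Y(v,x_2)w$, I argue coefficientwise in $x_2$: each coefficient $c_p$ of $Y(v,x_2)w$ is hit by only finitely many $u_j$.

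Next I apply the commutator formula \eqref{eq:LL-comm}:
\[
[u_j,Y(v,x_2)]=\sum_{i\ge0}\binom{j}{i}x_2^{j-i}Y(u_iv,x_2),
\]
a finite sum in $i$ since $u_iv=0$ for $i\gg0$. Interchanging the $j$- and $i$-sums gives
\[
\sum_{i\ge0}\Bigl(\sum_j\binom ji g_j(x_1)x_2^{j-i}\Bigr)Y(u_iv,x_2)=\sum_i F^+_{i,u}(x_1,x_2)Y(u_iv,x_2),
\]
by \eqref{eq:Fiu-plus-def}. The interchange is the main obstacle, since the $j$-sum inside $F^+_{i,u}$ is infinite. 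I will check it coefficientwise. Fix a power $x_1^a$. Only finitely many $i$ contribute. For $j\ge0$, the kernel carries $x_2^{j-i}$ while $Y(u_iv,x_2)w$ is bounded below in $x_2$, so for a fixed power of $x_2$ only finitely many $j$ contribute. Hence both sides are well-defined elements of $V[[x_1]]((x_2))$ with matching coefficients. Differentiating in $x_1$ then yields \eqref{eq:comm-Yprime-plus-deriv-FY}, using \eqref{eq:Fium-def} and \eqref{eq:YL-descendants-p}.

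For the congruence \eqref{eq:comm-Yprime-plus-deriv-F}, I replace each $Y(u_iv,x_2)$ by $Y_L(u_iv,x_2)$. The difference is $-\sum_i F^+_{i,r,u}(x_1,x_2)\Delta(u_iv,x_2)w$. Here I split $u_iv$ into homogeneous components if needed, although it is already homogeneous. Each coefficient of $\Delta(u_iv,x_2)w$ lies in $O_n^\circ(u_iv,V)$ by Proposition~\ref{prop:YL-On-truncation}(3). Multiplication by the scalar kernel, which is finite coefficientwise by the same bookkeeping as above, preserves membership, by Definition~\ref{def:W-congruence}. Notably, no leftmost-replacement issue arises here, since $Y(u_iv,x_2)$ is the leftmost operator in each term.

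Finally, I set $x_1=x_2=x$. Lemma~\ref{lem:Yprime-plus-product}(c) makes this substitution a well-defined linear map on $V[[x_1]]((x_2))$ that is multiplicative against $\C[[x_1]]((x_2))$. It carries the two products in the commutator to their diagonal versions and each summand to $F^+_{i,r,u}(x,x)Y(u_iv,x)$. Linearity preserves coefficient membership in $\sum_iO_n^\circ(u_iv,V)$, so \eqref{eq:comm-Yprime-plus-deriv-diagonal-FY} and \eqref{eq:comm-Yprime-plus-deriv-diagonal} follow.
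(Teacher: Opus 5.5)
Your proposal is correct and follows essentially the paper's argument. You expand the regular part as $\sum_j u_j g_j(x_1)$ with $g_j\in\C[[x_1]]$, justify the termwise commutator and the $i$/$j$ interchange by lower truncation in $x_2$, apply \eqref{eq:LL-comm}, replace the leftmost $Y(u_iv,x_2)$ via Proposition~\ref{prop:YL-On-truncation}(3), and pass to the diagonal with Lemma~\ref{lem:Yprime-plus-product}(c). The only difference is that you prove $r=1$ and then apply $\frac{1}{(r-1)!}\partial_{x_1}^{r-1}$ using \eqref{eq:Fium-def}, whereas the paper works at general $r$ through the explicit form \eqref{eq:Fium-explicit}; both routes are valid.
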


\begin{proof}
	By \eqref{eq:YL-plus-band-tail} and the computation preceding \eqref{eq:Fium-explicit},
	\[
	\frac{1}{(r-1)!}\partial_{x_1}^{r-1}Y_L^+(u,x_1)
	=\sum_{j\ge-2n-1}u_j\,g_j(x_1),
	\]
	where
	\[
	g_j(x_1)=
	\begin{cases}
	p_{j,r,u}(x_1),&-2n-1\le j\le-1,\\
	x_1^{2n+2-r}P_{j,r,u}(x_1),&j\ge0,
	\end{cases}
	\]
	with $g_j(x_1)\in\C[[x_1]]$.  Fix $w\in V$.  Both products in the commutator, applied to $w$,
	lie in $V[[x_1]]((x_2))$ by Lemma~\ref{lem:Yprime-plus-product}\textup{(c)}, and for each fixed
	power of $x_2$ only finitely many $j$ contribute (since $Y(v,x_2)w$ and $Y(v,x_2)u_jw$ are lower
	truncated in $x_2$, and $u_jc=0$ for $j\gg0$).  Hence the commutator may be computed termwise
	in $j$, and \eqref{eq:LL-comm} with $\ell=j$ gives
	\[
	\sum_{j\ge-2n-1}g_j(x_1)\sum_{i\ge0}\binom{j}{i}x_2^{\,j-i}\,Y(u_iv,x_2)w.
	\]
	The $i$-sum is finite, uniformly in $j$, since $u_iv=0$ for $i\gg0$.  For fixed $i$ and a fixed
	power of $x_2$, only finitely many $j$ contribute, because $j\ge-2n-1$ and $Y(u_iv,x_2)w$ is
	lower truncated.  Interchanging the two sums and using \eqref{eq:Fium-explicit} gives
	\eqref{eq:comm-Yprime-plus-deriv-FY}.

	For \eqref{eq:comm-Yprime-plus-deriv-F}, Proposition~\ref{prop:YL-On-truncation}\textup{(3)}
	applied to the homogeneous vectors $u_iv$ gives
	\[
	Y(u_iv,x_2)\equiv_{O_n^\circ(u_iv,V)}Y_L(u_iv,x_2);
	\]
	multiply by the scalar series
	$F^+_{i,r,u}(x_1,x_2)$ and sum over the finitely many $i$ with $u_iv\ne0$.  The diagonal
	identities follow by applying the substitution of
	Lemma~\ref{lem:Yprime-plus-product}\textup{(c)}, which is linear, compatible with
	multiplication by $F^+_{i,r,u}(x_1,x_2)\in\C[[x_1]]((x_2))$, and sends each coefficient to a
	finite sum of coefficients, so it preserves congruences.
\end{proof}

\begin{lem}[Explicit diagonal kernel]\label{lem:Cium-YL}
	Let $u\in V$ be homogeneous, $i\ge0$, and $r\ge1$.  The diagonal kernel admits the form
	\begin{equation}\label{eq:Fium-plus-diag} \begin{aligned} F_{i,r,u}^{+}(x,x)  &= \sum_{j=-2n-1}^{-1}\binom{j}{i}\,p_{j,r,u}(x)\,x^{\,j-i} \;+\; x^{2n+2-r-i}\sum_{k\ge0}\binom{k}{i}\,P_{k,r,u}(x)\,x^{k} \\[2pt] &= x^{-i-r}(1+x)^{-(\wt u+n)-(r-1)}\,Q_{i,r,u}(x) \;+\; x^{2n+2-r-i}\sum_{k\ge0}\binom{k}{i}\,P_{k,r,u}(x)\,x^{k}. \end{aligned} \end{equation}
	Here \(p_{j,r,u}(x)\) and \(P_{k,r,u}(x)\) are as in
	Lemma~\ref{lem:p-higher-derivatives}, and \(Q_{i,r,u}(x)\in\C[x]\) is the polynomial
	\begin{equation}\label{eq:Qium-def}
	\begin{aligned}
	Q_{i,r,u}(x)
	&:=
	\sum_{\ell=0}^{r-1}
	(-1)^{r-1-\ell}
	\binom{\wt u + n + r - 2 - \ell}{\,r-1-\ell\,}
	(1+x)^{\ell}
	\\
	&\quad\cdot
	\sum_{s=0}^{2n}
	\Biggl(
	\sum_{k=s-2n-1}^{-1}
	\binom{k}{i}\,\binom{-k-1+s}{\ell}
	\Biggr)
	\binom{\wt u + n}{s}\,x^{\,s-\ell+r-1}.
	\end{aligned}
	\end{equation}
	In particular, in Cases~{\rm(2)} and~{\rm(3a)} of Remark~\ref{rem:YL-cases} we have
	$P_{j,u}(x)=0$ for all $j\ge0$, hence $P_{k,r,u}(x)=0$ for all $k\ge0$ and the second term
	in \eqref{eq:Fium-plus-diag} vanishes.
\end{lem}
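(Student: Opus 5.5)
The plan is to start from the explicit form \eqref{eq:Fium-explicit} of $F^+_{i,r,u}(x_1,x_2)$ and set $x_1=x_2=x$. This substitution is legitimate by Lemma~\ref{lem:Yprime-plus-product}(c), since the kernel lies in $\C[[x_1]]((x_2))$. In the second sum, rename $j$ as $k$ and write $x^{k-i}\cdot x^{2n+2-r}=x^{2n+2-r-i}x^k$. Only $k\ge i$ contribute, because $\binom{k}{i}=0$ for $0\le k<i$. This immediately gives the first line of \eqref{eq:Fium-plus-diag}. The second line then only requires rewriting the finite band sum $\sum_{j=-2n-1}^{-1}\binom{j}{i}p_{j,r,u}(x)x^{j-i}$ as $x^{-i-r}(1+x)^{-(\wt u+n)-(r-1)}Q_{i,r,u}(x)$.

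For this, substitute the pulled-out factor form \eqref{eq:pm-factor-form} of Lemma~\ref{lem:p-higher-derivatives}(i) for $p_{j,r,u}(x)$. This factors out $(1+x)^{-(\wt u+n+r-1)}$ and leaves a finite sum over $s'\in\{0,\dots,r-1\}$ and $i'\in\{0,\dots,2n+j+1\}$ of
\[
\binom{-\wt u-n}{r-1-s'}(1+x)^{s'}\binom{\wt u+n}{i'}\binom{i'-j-1}{s'}x^{i'-j-1-s'}\binom{j}{i}x^{j-i}.
\]
The powers of $x$ combine to $x^{i'-1-s'-i}=x^{-i-r}\,x^{i'-s'+r-1}$. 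Rename $s'=\ell$ and $i'=s$. The constraint $i'\le 2n+j+1$ with $j\le-1$ forces $0\le s\le 2n$, and for fixed $s$ it is equivalent to $j\ge s-2n-1$. Interchanging the $j$- and $s$-sums, the inner sum over $k:=j$ runs from $s-2n-1$ to $-1$ with summand $\binom{k}{i}\binom{-k-1+s}{\ell}$, which matches \eqref{eq:Qium-def}. For the prefactor, apply the negative-top identity $\binom{-\wt u-n}{r-1-\ell}=(-1)^{r-1-\ell}\binom{\wt u+n+r-2-\ell}{r-1-\ell}$, which reproduces the sign and binomial in \eqref{eq:Qium-def}. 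Since all sums are finite, $Q_{i,r,u}\in\C[x]$.

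For the final claim, in Case~(2) we have $\wt u+n=0$, and in Case~(3a) we have $0<\wt u+n$ with $j_0\le0$. In both cases $0\le\wt u+n\le 2n+j+1$ for every $j\ge0$: in Case~(3a) this holds because $\wt u+n\le 2n+1\le 2n+j+1$. Lemma~\ref{lem:p-decomposition} then gives $P_{j,u}=0$, and Lemma~\ref{lem:p-higher-derivatives}(ii) gives $P_{k,r,u}=0$ for $k\ge0$. Hence the second term in \eqref{eq:Fium-plus-diag} vanishes.

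The main obstacle is the index bookkeeping in the interchange of the $j$- and $s$-sums. Concretely, one must confirm that the range $s\le2n$ is forced and that no boundary terms (for example $j=-2n-1$, or $s=0$) are lost or double counted. I would verify this by tracking the single constraint set $\{-2n-1\le j\le -1,\ 0\le s\le 2n+j+1\}$ directly rather than manipulating the bounds separately.
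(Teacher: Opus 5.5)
Your proposal is correct and follows the paper's proof essentially step for step: diagonal substitution in \eqref{eq:Fium-explicit}, insertion of the pulled-out factor form \eqref{eq:pm-factor-form} with the renaming $i'\to s$, $s'\to\ell$, interchange of the finite $j$- and $s$-sums via $s-2n-1\le j\le -1$, the negative-top identity for $\binom{-\wt u-n}{r-1-\ell}$, and the criterion of Lemma~\ref{lem:p-decomposition} for the vanishing of the tail in Cases~(2) and~(3a). One small point: finiteness of the sums alone only makes $Q_{i,r,u}$ a Laurent polynomial, so you also need $s-\ell+r-1\ge0$, which holds because $s\ge0$ and $\ell\le r-1$.
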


\begin{proof}
	The first line of \eqref{eq:Fium-plus-diag} is \eqref{eq:Fium-explicit} at $x_1=x_2=x$.  It
	remains to rewrite the finite \(j<0\) part.  By Lemma~\ref{lem:p-higher-derivatives},
	equation \eqref{eq:pm-factor-form} (with its summation indices renamed \(i\to s\) and
	\(s\to\ell\), since \(i\) is the kernel index here),
	\[
	\begin{aligned}
	&\sum_{j=-2n-1}^{-1}\binom{j}{i}p_{j,r,u}(x)x^{j-i}
	= (1+x)^{-(\wt u+n+r-1)}  \\& \cdot
	\sum_{\ell=0}^{r-1}
	\binom{-\wt u-n}{r-1-\ell}(1+x)^\ell
	\sum_{j=-2n-1}^{-1}\binom{j}{i}
	\sum_{s=0}^{2n+j+1}
	\binom{\wt u+n}{s}\binom{s-j-1}{\ell}x^{s-i-1-\ell}.
	\end{aligned}
	\]
	Interchanging the finite \(j\)- and \(s\)-sums, the condition
	\(0\le s\le2n+j+1\) becomes \(s-2n-1\le j\le -1\).  Therefore
	\[
	\begin{aligned}
	&\sum_{j=-2n-1}^{-1}\binom{j}{i}p_{j,r,u}(x)x^{j-i}  \\
	&= x^{-i-r}(1+x)^{-(\wt u+n+r-1)}
	\sum_{\ell=0}^{r-1}
	(-1)^{r-1-\ell}
	\binom{\wt u+n+r-2-\ell}{r-1-\ell}(1+x)^\ell  \\
	&\qquad\cdot
	\sum_{s=0}^{2n}
	\left(
	\sum_{j=s-2n-1}^{-1}
	\binom{j}{i}\binom{s-j-1}{\ell}
	\right)
	\binom{\wt u+n}{s}x^{s-\ell+r-1},
	\end{aligned}
	\]
	where
	\[
	\binom{-\wt u-n}{r-1-\ell}
	=
	(-1)^{r-1-\ell}
	\binom{\wt u+n+r-2-\ell}{r-1-\ell}.
	\]
	By \eqref{eq:Qium-def}, the last display is
	\[
	x^{-i-r}(1+x)^{-(\wt u+n+r-1)}Q_{i,r,u}(x).
	\]
	The expression \eqref{eq:Qium-def} is a polynomial, since all sums are finite
	and \(s-\ell+r-1\ge0\).  Substituting this expression for the finite \(j<0\)
	part gives \eqref{eq:Fium-plus-diag}.
\end{proof}

\subsection{The one-step identity and one-sided iterates}

\begin{lem}[One-step truncated identity]\label{lem:one-step-general}\label{cor:one-step-YL-minimal}
Let $u,v\in V$ be homogeneous, let $r\ge1$, and put
\[
U_L=O_n^\circ(u,V)+O_n^\circ(v,V)+\sum_{i\ge0}O_n^\circ(u_iv,V).
\]
Here $Y_L(u_{-r}\mathbf1,x)$ is the derivative field of
Convention~\ref{conv:descendant-derivative}.  In $\Hom(V,V((x)))$,
\begin{equation}\label{eq:one-step-YL-U}
Y(u_{-r}v,x)\equiv_{U_L}
\sum_{i\ge0}F^+_{i,r,u}(x,x)Y_L(u_iv,x)
 +Y_L(v,x)Y_L(u_{-r}\mathbf1,x).
\end{equation}
\end{lem}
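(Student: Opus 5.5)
We begin from the iterate (normal ordering) formula for a regular iterate: for $r\ge1$,
\[
Y(u_{-r}v,x)=\frac{1}{(r-1)!}\Bigl(\partial_x^{r-1}Y^+(u,x)\,Y(v,x)+Y(v,x)\,\partial_x^{r-1}Y^-(u,x)\Bigr),
\]
which is the standard $(r-1)$-st derivative of the normally ordered product ${:}Y(u,x)Y(v,x){:}$ (cf.\ \cite{LL}). We then replace $Y^\pm(u,x)$ by $Y_L^\pm(u,x)$ and the remaining $Y(v,x)$ factors by $Y_L(v,x)$, keeping track of what each replacement costs and only ever replacing a factor when it acts last.

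\emph{The singular part.} By Proposition~\ref{prop:YL-On-truncation}(2), $Y^-(u,x)=Y_L^-(u,x)$, so the second term is exactly $Y(v,x)\,Y_L^-(u_{-r}\mathbf1,x)$ with the derivative-field convention. Here $Y(v,x)$ is leftmost, and by Lemma~\ref{lem:Yprime-plus-product}(a) the product is a finite sum of $f(x)y$ terms. Lemma~\ref{cor:left-replacement-YL} therefore gives $Y(v,x)Y_L^-(u_{-r}\mathbf1,x)\equiv_{O_n^\circ(v,V)}Y_L(v,x)Y_L^-(u_{-r}\mathbf1,x)$.

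\emph{The regular part.} Write $\partial^{r-1}Y^+(u,x)/(r-1)!=G(x)-\partial^{r-1}\Delta(u,x)/(r-1)!$ with $G(x)=\partial^{r-1}Y_L^+(u,x)/(r-1)!$, using $\Delta=Y_L^+-Y^+$. The $\Delta$ term acts leftmost on $Y(v,x)w$, so its coefficients lie in $O_n^\circ(u,V)$. The regularity of the product in $V((x))$ is supplied by Lemma~\ref{lem:Yprime-plus-product}(c) (diagonal substitution), applied to $\Delta$, which has the same $\C[[x]]$-coefficient structure as $G$. We are left with $G(x)Y(v,x)$, which we write as $Y(v,x)G(x)+[G(x),Y(v,x)]$. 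By \eqref{eq:comm-Yprime-plus-deriv-diagonal}, the commutator is congruent to $\sum_iF^+_{i,r,u}(x,x)Y_L(u_iv,x)$ modulo $\sum_iO_n^\circ(u_iv,V)$. In $Y(v,x)G(x)$ the factor $Y(v,x)$ is leftmost, so leftmost replacement again gives $Y_L(v,x)G(x)=Y_L(v,x)Y_L^+(u_{-r}\mathbf1,x)$ modulo $O_n^\circ(v,V)$. Adding this to the singular piece reassembles $Y_L(v,x)Y_L(u_{-r}\mathbf1,x)$, which yields \eqref{eq:one-step-YL-U}.

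\emph{Main obstacle.} The delicate step is making the leftmost-replacement and commutator steps rigorous at coincident arguments. The plan is to carry out every step in two variables, in $V[[x_1]]((x_2))$, with $x_1$ attached to the $u$-factor. There the congruences are coefficientwise and Lemma~\ref{lem:Yprime-plus-product}(c) applies. We then specialize $x_1=x_2=x$, using the linearity of the substitution and the fact that it sends each coefficient to a finite sum of coefficients, so that congruences are preserved. We also need to check that the normal-ordering formula is valid in this two-variable form, i.e.\ that $\partial^{r-1}Y^+(u,x_1)Y(v,x_2)$ and $Y(v,x_2)\partial^{r-1}Y^-(u,x_1)$ specialize correctly. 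For the second product this requires expanding in $x_2$ first, since $Y^-$ has negative powers; that product is handled by the finiteness in Lemma~\ref{lem:Yprime-plus-product}(a) rather than by (c).
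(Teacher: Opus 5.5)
Your proposal is correct and follows essentially the same route as the paper. It uses the normal-ordered product formula for $(u_{-r}\mathbf1)_{-1}v$, a leftmost replacement of $Y^+$ by $Y_L^+$ through $\Delta(u,x)$ (justified because $\Delta$ has only nonnegative powers of $x$), the identity $AB=[A,B]+BA$ with the diagonal commutator formula, and a leftmost replacement of $Y(v,x)$ based on the finiteness in Lemma~\ref{lem:Yprime-plus-product}\textup{(a)}.

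The differences are cosmetic. You replace $Y(v,x)$ separately in the regular and singular pieces, whereas the paper first reassembles $Y(v,x)Y_L(u_{-r}\mathbf1,x)$ (the smaller-modulus identity \eqref{eq:one-step-YL-expanded}) and replaces $Y(v,x)$ once at the end. The paper also bounds the $\Delta$ term directly on the diagonal, so the full two-variable bookkeeping you plan is needed only for the commutator, and Lemma~\ref{lem:comm-Yprime-plus-deriv} already supplies that.
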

\begin{proof}
First we prove the smaller-modulus identity
\begin{equation}\label{eq:one-step-YL-expanded}
	Y(u_{-r}v,x)
	\;\equiv_{\,O_n^\circ(u,V)\,}\;
	\sum_{i\ge0} F^+_{i,r,u}(x,x)\,Y(u_i v,x)\;+\;Y(v,x)\,Y_L(u_{-r}\mathbf1,x).
	\end{equation}
	Fix $z\in V$.  All products below are applied to $z$, and they are well defined by
	Lemma~\ref{lem:Yprime-plus-product}.  By the $L(-1)$--derivative property,
	$Y(u_{-r}\mathbf1,x)=\frac{1}{(r-1)!}\partial_x^{r-1}Y(u,x)$, and $\partial_x$ commutes with
	taking regular and singular parts, so
	$Y^{\pm}(u_{-r}\mathbf1,x)=\frac{1}{(r-1)!}\partial_x^{r-1}Y^{\pm}(u,x)$.  By
	Proposition~\ref{prop:YL-On-truncation}\textup{(2)}, $Y_L^-(u,x)=Y^-(u,x)$ and
	$Y_L^+(u,x)=Y^+(u,x)+\Delta(u,x)$.  Finally $u_{-r}v=(u_{-r}\mathbf1)_{-1}v$, so the
	normal-ordered product formula $Y(a_{-1}b,x)=Y^+(a,x)Y(b,x)+Y(b,x)Y^-(a,x)$ \cite{LL}
	applies with $a=u_{-r}\mathbf1$ and $b=v$.  Therefore
	\begin{align*}
	Y(u_{-r}v,x)
	&=Y^+(u_{-r}\mathbf1,x)\,Y(v,x)+Y(v,x)\,Y^-(u_{-r}\mathbf1,x)\\
	&=\tfrac{1}{(r-1)!}\partial_x^{r-1}Y^+(u,x)\,Y(v,x)
	+Y(v,x)\,\tfrac{1}{(r-1)!}\partial_x^{r-1}Y_L^-(u,x)\\
	&\equiv_{O_n^\circ(u,V)}\tfrac{1}{(r-1)!}\partial_x^{r-1}Y_L^+(u,x)\,Y(v,x)
	+Y(v,x)\,\tfrac{1}{(r-1)!}\partial_x^{r-1}Y_L^-(u,x)\\
	&=\Bigl[\tfrac{1}{(r-1)!}\partial_x^{r-1}Y_L^+(u,x),\,Y(v,x)\Bigr]
	+Y(v,x)\,\tfrac{1}{(r-1)!}\partial_x^{r-1}Y_L(u,x)\\
	&=\sum_{i\ge0}F^+_{i,r,u}(x,x)\,Y(u_iv,x)+Y(v,x)\,Y_L(u_{-r}\mathbf1,x).
	\end{align*}
	The third line is the only replacement, and it is a leftmost replacement.  The difference of
	the third and second lines is $\frac{1}{(r-1)!}\bigl(\partial_x^{r-1}\Delta(u,x)\bigr)Y(v,x)z$.
	Write $Y(v,x)z=\sum_{p\ge P}c_px^p$.  Since $\Delta(u,x)$ has only nonnegative powers of $x$
	(Proposition~\ref{prop:YL-On-truncation}\textup{(3)}), the coefficient of $x^M$ in this
	difference is a finite sum, over $P\le p\le M$, of coefficients of
	$\partial_x^{r-1}\Delta(u,x)c_p$; each is a scalar multiple of a coefficient of
	$\Delta(u,x)c_p$, hence lies in $O_n^\circ(u,V)$.  The fourth line is $AB=[A,B]+BA$ together
	with $Y_L^++Y_L^-=Y_L$.  The fifth line is \eqref{eq:comm-Yprime-plus-deriv-diagonal-FY}
	together with \eqref{eq:descendant-derivative-convention}.  This proves
	\eqref{eq:one-step-YL-expanded}.  The truncated field of the vector $u_{-r}\mathbf1$ is never
	used.

Put $U_L^*=O_n^\circ(u,V)+\sum_{i\ge0}O_n^\circ(u_iv,V)$.  The intermediate identity is
\begin{equation}\label{eq:one-step-YL-Ustar}
	Y(u_{-r}v,x)
	\;\equiv_{\,U_L^*\,}\;
	\sum_{i\ge0} F^+_{i,r,u}(x,x)\,Y_L(u_i v,x)\;+\;Y(v,x)\,Y_L(u_{-r}\mathbf1,x).
\end{equation}
	Indeed, $u_iv=0$ for $i\gg0$, each
	$F^+_{i,r,u}(x,x)$ is a scalar series, and
	$Y(u_iv,x)z\equiv_{O_n^\circ(u_iv,V)}Y_L(u_iv,x)z$ by
	Proposition~\ref{prop:YL-On-truncation}\textup{(3)}.  Finally, since $U_L=U_L^*+O_n^\circ(v,V)$,
	for \eqref{eq:one-step-YL-U} replace
	the leftmost factor $Y(v,x)$ in $Y(v,x)Y_L(u_{-r}\mathbf1,x)z$: by
	Lemma~\ref{lem:Yprime-plus-product}\textup{(a)} the vector $Y_L(u_{-r}\mathbf1,x)z$ is a finite
	sum of terms $f(x)y$ with $f(x)\in\C((x))$ and $y\in V$, and
	$Y(v,x)y\equiv_{O_n^\circ(v,V)}Y_L(v,x)y$.
\end{proof}

\begin{prop}[One-sided iterated expansion]\label{prop:one-sided-iterate}\label{cor:one-sided-iterate-YL}
	Let $\ell\ge2$, let $u^{(1)},\dots,u^{(\ell)}\in V$ be homogeneous, and let
	$r_1,\dots,r_\ell\ge1$.  Set
	\[
	w^{(s)}:=u^{(s)}_{-r_s}u^{(s+1)}_{-r_{s+1}}\cdots u^{(\ell)}_{-r_\ell}\mathbf1,
	\qquad w:=w^{(1)}.
	\]
	Each $Y_L(u^{(s)}_{-r_s}\mathbf1,x)$ below is the derivative field of
	Convention~\ref{conv:descendant-derivative}.  Set
	\[
	\mathcal U_\ell
	:=
	\sum_{s=1}^{\ell}O_n^\circ(u^{(s)},V)
	+
	\sum_{s=1}^{\ell-1}\ \sum_{i\ge0} O_n^\circ\!\bigl(u^{(s)}_i w^{(s+1)},V\bigr).
	\]
	Then in $\Hom(V,V((x)))$,
		\begin{equation}\label{eq:general-truncated-iterate-onesided-YL}
	\begin{aligned}
	Y(w,x)\;\equiv_{\mathcal U_\ell}\;&
	\sum_{i\ge0}F^+_{i,r_1,u^{(1)}}(x,x)\,
	Y_L\!\bigl(u^{(1)}_i w^{(2)},x\bigr)
	\\
	&+\;
	\sum_{s=2}^{\ell-1}\ \sum_{i\ge0}
	F^+_{i,r_s,u^{(s)}}(x,x)\,
	Y_L\!\bigl(u^{(s)}_i w^{(s+1)},x\bigr)\,
	Y_L\!\bigl(u^{(s-1)}_{-r_{s-1}}\mathbf1,x\bigr)\cdots
	Y_L\!\bigl(u^{(1)}_{-r_1}\mathbf1,x\bigr)
	\\
	&+\;
	Y_L\!\bigl(u^{(\ell)}_{-r_\ell}\mathbf1,x\bigr)\cdots
	Y_L\!\bigl(u^{(1)}_{-r_1}\mathbf1,x\bigr).
	\end{aligned}
	\end{equation}
\end{prop}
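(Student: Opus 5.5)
Proceed by induction on $\ell$, peeling off the leftmost vacuum mode each time with the smaller-modulus one-step identity \eqref{eq:one-step-YL-expanded} from the proof of Lemma~\ref{lem:one-step-general}. That identity keeps the untruncated field $Y(v,x)$ as the leftmost factor of the product term. Write $w=u^{(1)}_{-r_1}w^{(2)}$ and apply \eqref{eq:one-step-YL-expanded} with $u=u^{(1)}$, $v=w^{(2)}$, $r=r_1$. If $w^{(2)}$ is not homogeneous, apply it to its homogeneous components and use that $Y_L$ is linear and that the kernels $F^+_{i,r_1,u^{(1)}}$ depend only on $u^{(1)}$. This gives, modulo $O_n^\circ(u^{(1)},V)$,
\[
Y(w,x)\equiv \sum_{i\ge0}F^+_{i,r_1,u^{(1)}}(x,x)Y(u^{(1)}_iw^{(2)},x)+Y(w^{(2)},x)\,Y_L(u^{(1)}_{-r_1}\mathbf1,x).
\]
Replacing each $Y(u^{(1)}_iw^{(2)},x)$ by $Y_L$ costs only $\sum_iO_n^\circ(u^{(1)}_iw^{(2)},V)\subseteq\mathcal U_\ell$, exactly as in \eqref{eq:one-step-YL-Ustar}. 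This produces the first line of \eqref{eq:general-truncated-iterate-onesided-YL}.

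The remaining term $Y(w^{(2)},x)Y_L(u^{(1)}_{-r_1}\mathbf1,x)z$ is handled through Lemma~\ref{lem:Yprime-plus-product}(a). For fixed $z\in V$, $Y_L(u^{(1)}_{-r_1}\mathbf1,x)z=\sum_a f_a(x)y_a$ is a finite sum with $f_a\in\C((x))$ and $y_a\in V$. Thus $Y(w^{(2)},x)$ is again the leftmost factor, acting on honest vectors, and any congruence $Y(w^{(2)},x)y\equiv_W \Phi(x)y$ valid for all $y$ transfers after multiplying by the scalars $f_a(x)$ and summing. For the inductive step, therefore, I need the statement for the shorter word $w^{(2)}$, of length $\ell-1$, in the form of an operator congruence on all inputs.

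\textbf{Induction structure.} I organise the induction as follows.
\begin{itemize}
\item[(i)] The base case $\ell=2$ is exactly Lemma~\ref{lem:one-step-general} with $v=u^{(2)}_{-r_2}\mathbf1$. Here I use $Y_L(v,x)\equiv_{O_n^\circ(u^{(2)},V)}$ the derivative field $Y_L(u^{(2)}_{-r_2}\mathbf1,x)$ when leftmost, by Lemma~\ref{lem:YL-descendants}(b), together with $O_n^\circ(u^{(2)}_{-r_2}\mathbf1,V)\subseteq O_n^\circ(u^{(2)},V)$ from Lemma~\ref{lem:Onu-Lminus1-inclusion}. The sum over $s=2,\dots,\ell-1$ is empty.
\item[(ii)] For general $\ell$, the inductive hypothesis applied to $w^{(2)}$ with the shifted data $u^{(2)},\dots,u^{(\ell)}$ gives a congruence modulo $\mathcal U'_{\ell-1}$. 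Every summand of $\mathcal U'_{\ell-1}$ appears in $\mathcal U_\ell$.
\item[(iii)] Substituting into the leftmost factor gives three kinds of terms. The kernel terms $F^+_{i,r_2,u^{(2)}}Y_L(u^{(2)}_iw^{(3)},x)$ become the $s=2$ summand after right multiplication by $Y_L(u^{(1)}_{-r_1}\mathbf1,x)$. The middle terms shift $s\mapsto s$ and acquire the extra rightmost factor. The pure product term becomes the full product $Y_L(u^{(\ell)}_{-r_\ell}\mathbf1,x)\cdots Y_L(u^{(1)}_{-r_1}\mathbf1,x)$.
\end{itemize}

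The main obstacle is legitimacy of the substitution in (iii). The inductive congruence holds only as a statement about the leftmost operator applied to arbitrary vectors, and the right factors consist of inner derivative fields, which must not be replaced (Remark~\ref{rem:leftmost-not-all}). The finite-sum decomposition of Lemma~\ref{lem:Yprime-plus-product}(a) handles this: it reduces everything to applying a congruence of $\Hom(V,V((x)))$ to finitely many vectors $y_a$, scaled by Laurent series $f_a$. This is allowed by Definition~\ref{def:W-congruence}(3), since the coefficient of each $x^M$ is then a finite combination of coefficients in $W$. I must also check that the products on the right-hand side remain well defined in $V((x))$, which is again Lemma~\ref{lem:Yprime-plus-product}(a). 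Finally, the convention that $Y_L(u^{(s)}_{-r_s}\mathbf1,x)$ means the derivative field must be used consistently, so that no truncated field of a vector is ever placed in an inner position.
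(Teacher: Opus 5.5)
Your proposal is correct and follows essentially the paper's route: induction on $\ell$, peeling off the leftmost mode with \eqref{eq:one-step-YL-expanded}, and justifying each leftmost substitution through the finite-sum decomposition of Lemma~\ref{lem:Yprime-plus-product}(a). The only difference is bookkeeping: the paper first proves, by induction, the smaller-modulus identity with untruncated fields $Y(u^{(s)}_iw^{(s+1)},x)$ modulo $\mathcal U_\ell^{(0)}=\sum_s O_n^\circ(u^{(s)},V)$ (base case via \eqref{eq:Y-descendants-via-YL-derivative}), and truncates the correction fields only at the end, whereas you truncate them at each stage and induct directly on the final statement, which works equally well because $\mathcal U'_{\ell-1}\subseteq\mathcal U_\ell$.
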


\begin{proof}
Put $\mathcal U_\ell^{(0)}=\sum_{s=1}^{\ell}O_n^\circ(u^{(s)},V)$.
We first prove the smaller-modulus identity
\begin{equation}\label{eq:general-truncated-iterate-onesided-YL-Y}
	\begin{aligned}
	Y(w,x)\;\equiv_{\mathcal U_\ell^{(0)}}\;&
	\sum_{i\ge0}F^+_{i,r_1,u^{(1)}}(x,x)\,
	Y\!\bigl(u^{(1)}_i w^{(2)},x\bigr)
	\\
	&+\;
	\sum_{s=2}^{\ell-1}\ \sum_{i\ge0}
	F^+_{i,r_s,u^{(s)}}(x,x)\,
	Y\!\bigl(u^{(s)}_i w^{(s+1)},x\bigr)\,
	Y_L\!\bigl(u^{(s-1)}_{-r_{s-1}}\mathbf1,x\bigr)\cdots
	Y_L\!\bigl(u^{(1)}_{-r_1}\mathbf1,x\bigr)
	\\
	&+\;
	Y_L\!\bigl(u^{(\ell)}_{-r_\ell}\mathbf1,x\bigr)\cdots
	Y_L\!\bigl(u^{(1)}_{-r_1}\mathbf1,x\bigr).
	\end{aligned}
	\end{equation}

	Write $\Pi_s(x):=Y_L\!\bigl(u^{(s)}_{-r_s}\mathbf1,x\bigr)\cdots
	Y_L\!\bigl(u^{(1)}_{-r_1}\mathbf1,x\bigr)$ for $s\ge1$.  Fix $z\in V$; all products are applied
	to $z$ and are well defined by Lemma~\ref{lem:Yprime-plus-product}\textup{(a)}.

	We prove \eqref{eq:general-truncated-iterate-onesided-YL-Y} by induction on $\ell$.  For
	$\ell=2$, apply \eqref{eq:one-step-YL-expanded} to $Y(u^{(1)}_{-r_1}w^{(2)},x)$ with
	$w^{(2)}=u^{(2)}_{-r_2}\mathbf1$, modulo $O_n^\circ(u^{(1)},V)$.  In the product term
	$Y(w^{(2)},x)\Pi_1(x)z$, the vector $\Pi_1(x)z$ is a finite sum of terms $f(x)y$, and
	\eqref{eq:Y-descendants-via-YL-derivative} replaces the leftmost factor $Y(w^{(2)},x)$ by the
	derivative field $Y_L(u^{(2)}_{-r_2}\mathbf1,x)$ modulo $O_n^\circ(u^{(2)},V)$.  This gives
	\eqref{eq:general-truncated-iterate-onesided-YL-Y} for $\ell=2$.

	Let $\ell\ge3$ and assume \eqref{eq:general-truncated-iterate-onesided-YL-Y} for words of length
	$\ell-1$.  By \eqref{eq:one-step-YL-expanded},
	\[
	Y(w^{(1)},x)\equiv_{O_n^\circ(u^{(1)},V)}
	\sum_{i\ge0}F^+_{i,r_1,u^{(1)}}(x,x)\,Y(u^{(1)}_i w^{(2)},x)
	\;+\;Y(w^{(2)},x)\,\Pi_1(x).
	\]
	Apply the induction hypothesis, for the word $w^{(2)}$ built from $u^{(2)},\dots,u^{(\ell)}$, to
	the leftmost factor $Y(w^{(2)},x)$ of $Y(w^{(2)},x)\Pi_1(x)z$.  This is a leftmost replacement:
	$\Pi_1(x)z$ is a finite sum of terms $f(x)y$ with $f(x)\in\C((x))$ and $y\in V$, and the
	hypothesis applies to each $y$ modulo $\sum_{s=2}^{\ell}O_n^\circ(u^{(s)},V)$.  Multiplying the
	result on the right by $\Pi_1(x)$ produces the terms $s=2,\dots,\ell-1$ and the final product of
	\eqref{eq:general-truncated-iterate-onesided-YL-Y}, with total modulus $\mathcal U_\ell^{(0)}$.

	Finally, \eqref{eq:general-truncated-iterate-onesided-YL} follows from
	\eqref{eq:general-truncated-iterate-onesided-YL-Y}: in each term, $Y(u^{(s)}_iw^{(s+1)},x)$ is
	the leftmost factor, $\Pi_{s-1}(x)z$ (or $z$ itself for $s=1$) is a finite sum of terms $f(x)y$,
	and, by Proposition~\ref{prop:YL-On-truncation}\textup{(3)},
	\[
	Y(u^{(s)}_iw^{(s+1)},x)y\equiv_{O_n^\circ(u^{(s)}_iw^{(s+1)},V)}Y_L(u^{(s)}_iw^{(s+1)},x)y.
	\]  Summing over $s$ and the finitely many
	nonzero $u^{(s)}_iw^{(s+1)}$ gives the modulus $\mathcal U_\ell$.
\end{proof}

\section{Replacement residues and finite-length circle subspaces}\label{sec:finite-length}

Throughout this section we fix $n\in\N$.

 \begin{defn}[Strong generating set]\label{def:strong-generating-set}
 	Let \(V\) be a vertex operator algebra and let \(S\subseteq V\) be a subset.
 	Write \(S=\{u^{(i)}\}_{i\in I}\).
 	We say that \(S\) \emph{strongly generates} \(V\) if \(V\) is linearly spanned by vectors of the form
 	\begin{equation}\notag
 	u^{(i_1)}_{-m_1}\,u^{(i_2)}_{-m_2}\cdots u^{(i_r)}_{-m_r}\,\mathbf{1},
 	\end{equation}
 	where \(r\ge 0\), \(i_j\in I\), and \(m_j\in\Z_{\ge 1}\) for all \(j\).
 \end{defn}

\subsection{Replacement residues}

\begin{defn}[Residues attached to a replacement field]\label{def:Rmk-replacement}
	Fix $n\in\N$ and let $U\subseteq O_n^\circ(V)$ be a subspace.
	Let $v\in V$ be homogeneous (e.g.\ $v=s^{(1)}_{-r_1}\cdots s^{(\ell)}_{-r_\ell}\mathbf1$ with
	$s^{(j)}$ homogeneous), and let
	\[
	\widehat Y(v,x)\in\Hom\!\bigl(V,V((x))\bigr)
	\]
	be any formal series such that
	\begin{equation}\label{eq:Y-replaced-mod-U}
	Y(v,x)\ \equiv_U\ \widehat Y(v,x)
	\qquad\text{in }\Hom\!\bigl(V,V((x))\bigr)
	\end{equation}
	in the sense of Definition~\ref{def:W-congruence}\textup{(3)}. Applying the same residue kernel to \eqref{eq:Y-replaced-mod-U} turns a field-level replacement congruence
	into congruences between circle-type generators, producing $U$--reduced relations in $O_n^\circ(V)$.
	For integers $m,k\in\N$ with $m\ge k\ge0$ and $z\in V$, define
	\begin{equation}\label{eq:Rmk-replacement-def}
	v\,R^{n,\widehat Y}_{m,k}\,z
	\;:=\;
	\Res_x \frac{(1+x)^{\wt v+n+k}}{x^{m+2n+2}}\;\widehat Y(v,x)z.
	\end{equation}
	When $\widehat Y(v,x)=Y(v,x)$ this recovers $v\,R^n_{m,k}\,z$ from \eqref{eq:Rmk-def}.
\end{defn}
\begin{prop}[Replacement residues lie in $O_n^\circ(V)$]\label{prop:Rmk-replacement-in-On}
	Keep the notation of Definition~\ref{def:Rmk-replacement}.
	Then for all $m\ge k\ge0$ and $z\in V$ one has
	\begin{equation}\label{eq:Rmk-replacement-diff}
	v\,R^{n,\widehat Y}_{m,k}\,z
	\;\equiv_U\;
	v\,R^n_{m,k}\,z.
	\end{equation}
	Consequently,
	\begin{equation}\label{eq:Rmk-replacement-membership}
	v\,R^{n,\widehat Y}_{m,k}\,z\in O_n^\circ(V).
	\end{equation}
\end{prop}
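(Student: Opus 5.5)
The plan is to subtract the two residues and use only the stability of congruences under multiplication by scalar series and under formal residues, as recorded in Definition~\ref{def:W-congruence}. Fix $m\ge k\ge0$ and $z\in V$. By \eqref{eq:Rmk-replacement-def} and \eqref{eq:Rmk-def},
\[
v\,R^{n,\widehat Y}_{m,k}\,z-v\,R^n_{m,k}\,z
=\Res_x \frac{(1+x)^{\wt v+n+k}}{x^{m+2n+2}}\bigl(\widehat Y(v,x)-Y(v,x)\bigr)z .
\]
By \eqref{eq:Y-replaced-mod-U} in the sense of Definition~\ref{def:W-congruence}(3), the vector $D(x):=(\widehat Y(v,x)-Y(v,x))z$ lies in $U((x))$. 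The scalar series $(1+x)^{\wt v+n+k}x^{-m-2n-2}$ lies in $\C((x))$; it is a shifted power series, expanded as in \eqref{eq:binomial-expansion}.

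The only point needing care is that the product with $D(x)$ is well defined and has coefficients in $U$. Both factors are lower truncated Laurent series in $x$. Hence each coefficient of the product is a finite linear combination of coefficients of $D(x)$, and so lies in $U$. The residue is one such coefficient, namely the coefficient of $x^{-1}$. It follows that the difference lies in $U$, which is \eqref{eq:Rmk-replacement-diff}.

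For \eqref{eq:Rmk-replacement-membership}, I would write
\[
v\,R^{n,\widehat Y}_{m,k}\,z=v\,R^n_{m,k}\,z+\bigl(v\,R^{n,\widehat Y}_{m,k}\,z-v\,R^n_{m,k}\,z\bigr).
\]
The first term lies in $O_n^\circ(V)$ by Lemma~\ref{Zhu-lemma}, since $v$ is homogeneous and $m\ge k\ge0$. The second term lies in $U$, and $U\subseteq O_n^\circ(V)$ by the hypothesis of Definition~\ref{def:Rmk-replacement}. The sum therefore lies in $O_n^\circ(V)$.

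I expect no real obstacle. The one step to state explicitly is the finiteness of the coefficient sums in the product, which follows from lower truncation of both factors. The lower bound of $D(x)$ may depend on $z$, but this is harmless because $z$ is fixed.
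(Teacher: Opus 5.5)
Your proposal is correct and follows essentially the same route as the paper: subtract the two residues, use that $(\widehat Y(v,x)-Y(v,x))z\in U((x))$ is preserved under multiplication by the scalar series $(1+x)^{\wt v+n+k}x^{-m-2n-2}\in\C((x))$ and under residue extraction, and then combine Lemma~\ref{Zhu-lemma} with $U\subseteq O_n^\circ(V)$. Your remark that each coefficient of the product is a finite sum, by lower truncation of both factors, spells out a step the paper states in one line.
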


\begin{proof}
	By \eqref{eq:Y-replaced-mod-U} we have
	$\bigl(\widehat Y(v,x)-Y(v,x)\bigr)z\in U((x))$ for all $z\in V$.
	Multiplying by $\frac{(1+x)^{\wt v+n+k}}{x^{m+2n+2}}\in\C((x))$ preserves membership in
	$U((x))$, hence
	\[
	\Res_x \frac{(1+x)^{\wt v+n+k}}{x^{m+2n+2}}\bigl(\widehat Y(v,x)-Y(v,x)\bigr)z \in U,
	\]
	which is exactly \eqref{eq:Rmk-replacement-diff} by \eqref{eq:Rmk-def} and
	\eqref{eq:Rmk-replacement-def}.

	By Lemma~\ref{Zhu-lemma}, $v\,R^n_{m,k}\,z\in O_n^\circ(V)$ for all $m\ge k\ge0$ and $z\in V$.
	Since $U\subseteq O_n^\circ(V)$, \eqref{eq:Rmk-replacement-diff} implies
	$v\,R^{n,\widehat Y}_{m,k}\,z\in O_n^\circ(V)$, proving \eqref{eq:Rmk-replacement-membership}.
\end{proof}

\begin{rem}[Using replacement residues]
\label{rem:what-replacement-residue-buys}
The calculation has two steps. First use leftmost replacement and the
truncated iterate identities to obtain $Y(w,x)\equiv_U\widehat Y(w,x)$.
Then apply the residue kernel in \eqref{eq:Rmk-replacement-def}.
Proposition~\ref{prop:Rmk-replacement-in-On} gives the resulting
residue congruence and shows that the replacement residues belong to
$O_n^\circ(V)$ whenever $U\subseteq O_n^\circ(V)$.
\end{rem}

\subsection{Finite-length circle subspaces and conjectures}

\begin{defn}[Finite-length circle-relation subspaces]\label{def:OnN-SV}
Fix $n\in\N$ and a finite homogeneous subset $S\subset V$.
For $N\ge1$, let $\mathcal P_{\le N}(S)$ consist of the vacuum words
\begin{equation}\label{eq:finite-length-vacuum-words}
w=s^{(1)}_{-m_1}\cdots s^{(r)}_{-m_r}\mathbf1,
\qquad 1\le r\le N,\quad s^{(j)}\in S,\quad
m_1,\ldots,m_{r-1}\in\Z,\quad m_r\ge1.
\end{equation}
Define
\begin{equation}\label{eq:OnNSV-def}
O_n^\circ(N,S,V)
:=\Span_\C\{wR^n_{m,k}z:\ w\in\mathcal P_{\le N}(S),\ z\in V,
\ m\ge k\ge0\}\subseteq O_n^\circ(V).
\end{equation}
The containment follows from Lemma~\ref{Zhu-lemma}.
The plus subspace is
\begin{equation}\label{eq:OnNSV-plus-def}
O_n^\circ(N,S,V)^+
:=\Span_\C\{wR^n_{m,k}z:\ z\in V,\ m\ge k\ge0,
\ w\text{ as specified below}\},
\end{equation}
where $w$ either has length at most $N-1$, or has length $N$ in
\eqref{eq:finite-length-vacuum-words} with $m_j\le0$ for at least one
$j<N$. Thus
\begin{equation}\label{eq:circle-length-inclusions}
O_n^\circ(N-1,S,V)\subseteq O_n^\circ(N,S,V)^+
\subseteq O_n^\circ(N,S,V)\qquad(N\ge2).
\end{equation}
If $S$ strongly generates $V$, then
\begin{equation}\label{eq:circle-length-exhaustion}
\bigcup_{N\ge1}O_n^\circ(N,S,V)=O_n^\circ(V),
\end{equation}
because every vector of $V$ is a finite linear combination of regular
vacuum words and the vacuum slice is zero.
\end{defn}

We expect the first correction term in Proposition~\ref{prop:one-sided-iterate} to have
residues in $O_n^\circ(V)$.  Precisely, let $u^{(1)},v\in V$ be homogeneous, let $r_1\ge1$, let
$m\ge k\ge0$, and let $z\in V$.  The expected inclusion is
\begin{equation}\label{eq:conj-first-term-residue}
\begin{aligned}
\Res_x\;&\frac{(1+x)^{\wt u^{(1)}+\wt v+r_1-1+n+k}}{x^{m+2n+2}}
\left(\sum_{i\ge0}F^+_{i,r_1,u^{(1)}}(x,x)\,
Y(u^{(1)}_i v,x)\right)z\\
&\in\ \sum_{i\ge0}O_n^\circ(u^{(1)}_iv,V)\ \subseteq\ O_n^\circ(V).
\end{aligned}
\end{equation}
For words in $S$ we state the following termwise refinement as a conjecture.

\begin{conj}[Correction residues in the plus subspace]
\label{conj:finite-level-revised}\label{conj:correction-residues-plus}
Fix $n\in\N$ and a finite homogeneous strong generating set $S$.
Let $N\ge2$, let $\underline s=(s^{(1)},\ldots,s^{(N)})\in S^N$,
and take $r_1,\ldots,r_N\ge1$.  Put
\[
w^{(q)}:=s^{(q)}_{-r_q}\cdots s^{(N)}_{-r_N}\mathbf1
\quad(1\le q\le N),\qquad w:=w^{(1)}.
\]
For every $m\ge k\ge0$, $z\in V$, and $0\le t\le N-2$, the
correction residue with $t$ trailing factors, whose singular field sits
at position $t+1$ of $w$, separately lies in the global plus
subspace:
\begin{equation}\label{eq:conj-termwise-correction-mod-plus}
\begin{aligned}
&\Res_x\frac{(1+x)^{\wt w+n+k}}{x^{m+2n+2}}
\sum_{i\ge0}F^+_{i,r_{t+1},s^{(t+1)}}(x,x)\,
Y\!\bigl(s^{(t+1)}_i w^{(t+2)},x\bigr)\\
&\qquad\cdot
Y_L(s^{(t)}_{-r_{t}}\mathbf1,x)\cdots
Y_L(s^{(1)}_{-r_1}\mathbf1,x)z
\in O_n^\circ(N,S,V)^+.
\end{aligned}
\end{equation}
For $t=0$ the product of $Y_L$ factors is absent. With $\ell=N$ and
$u^{(q)}=s^{(q)}$, the residue in \eqref{eq:conj-termwise-correction-mod-plus}
is that of the correction term in
\eqref{eq:general-truncated-iterate-onesided-YL} indexed by $t+1$, with
$Y_L$ replaced by $Y$ in its leftmost factor.
\end{conj}
\begin{defn}[Auxiliary-degree hypothesis]\label{def:auxiliary-degree}
Let $S$ be a finite homogeneous strong generating set of $V$.
We say that $S$ satisfies the auxiliary-degree hypothesis if there is
a function $d:S\to\Z_{\ge0}$ such that every singular product
$a_jb$, $a,b\in S$, $j\ge0$, has a finite expansion
\begin{equation}\label{eq:auxiliary-degree-expansion}
a_jb=\sum_\rho c_\rho\,
t^{(\rho,1)}_{-r_{\rho,1}}\cdots
t^{(\rho,\ell_\rho)}_{-r_{\rho,\ell_\rho}}\mathbf1,
\qquad t^{(\rho,\mu)}\in S,\quad r_{\rho,\mu}\ge1,
\end{equation}
in which each nonzero summand has the conformal weight of $a_jb$ and
satisfies
\begin{equation}\label{eq:auxiliary-degree-drop}
\sum_{\mu=1}^{\ell_\rho}d(t^{(\rho,\mu)})\le d(a)+d(b)-1.
\end{equation}
The empty word is allowed and has degree zero. The auxiliary degrees
do not replace the conformal weights in the residue kernels.
\end{defn}

\begin{rem}[Relation to good increasing filtrations]\label{rem:auxiliary-degree-Li}
For positive assigned degrees and a compatible decomposition of the
generating subspace, this is the generator condition in Li's
good increasing filtration construction \cite[Theorem~4.6]{Li}.
Here degree-zero generators are also allowed. The hypothesis concerns
expansions in the specified set $S$; the existence of an unrelated good
filtration does not by itself verify \eqref{eq:auxiliary-degree-drop}.
If all generators have nonnegative conformal weight, one can take
$d(s)=\wt s$: the weight of a regular word is the sum of the generator
weights plus $\sum_\mu(r_{\rho,\mu}-1)$, whereas
$\wt(a_jb)=\wt a+\wt b-j-1$.
\end{rem}

By Remark~\ref{rem:auxiliary-degree-Li}, the auxiliary-degree hypothesis holds, with
$d(s)=\wt s$, whenever every element of $S$ has nonnegative weight.  In particular it holds
for every finite homogeneous strong generating set of an $\N$-graded vertex operator algebra.

\begin{conj}[Finite length under the auxiliary-degree hypothesis]
\label{conj:finite-length-zhu-relation}\label{conj:auxiliary-finite-length}
Let $n\ge1$ and let $S$ satisfy Definition~\ref{def:auxiliary-degree}.
Then
\begin{equation}\label{eq:auxiliary-finite-length}
O_n^\circ(V)=O_n^\circ(n+1,S,V).
\end{equation}
In particular,
\begin{equation}\label{eq:auxiliary-Zhu-finite-length}
O_n(V)=O^L(V)+O_n^\circ(n+1,S,V).
\end{equation}
\end{conj}

\begin{rem}[Unrestricted question and scope]\label{rem:finite-length-unrestricted}
By Remark~\ref{rem:auxiliary-degree-Li}, the auxiliary-degree hypothesis can fail only
when $S$ contains elements of negative weight.  For such $S$, whether
\eqref{eq:auxiliary-finite-length}, or the weaker equality
\eqref{eq:auxiliary-Zhu-finite-length}, holds without the hypothesis is a separate
question; the proof proposed in subsequent work uses \eqref{eq:auxiliary-degree-drop}.
\end{rem}

\subsection{First reductions}

We now record the first reduction results for the subspaces $O_n^\circ(N,S,V)$
(Definition~\ref{def:OnN-SV}).

\begin{lem}[An $N$-iterate expansion via translation]
	\label{lem:triple-iterate-derivative-expansion}
	Let $N\ge 2$.
	Let $u^{(1)},\dots,u^{(N)}\in V$.
	Fix integers $q_1,\dots,q_{N-1}\in\Z$ and fix $q_N\in\N$ with $q_N\ge 1$.
	Then
	\begin{equation}\label{eq:N-iterate-step1}
	\begin{aligned}
	u^{(1)}_{q_1}\cdots u^{(N-1)}_{q_{N-1}}\,u^{(N)}_{-q_N}\mathbf1
	&=
	\sum_{j=0}^{q_N-1}
	\binom{q_{N-1}}{j}\,
	\frac{1}{(q_N-1-j)!} \\
	&\qquad\cdot
	u^{(1)}_{q_1}\cdots u^{(N-2)}_{q_{N-2}}\,
	L(-1)^{q_N-1-j}\Bigl(u^{(N-1)}_{q_{N-1}-j}\,u^{(N)}\Bigr).
	\end{aligned}
	\end{equation}
	Moreover, commuting $u^{(1)}_{q_1},\dots,u^{(N-2)}_{q_{N-2}}$ past
	$L(-1)^{q_N-1-j}$ gives
	\begin{equation}\label{eq:N-iterate-step2}
	\begin{aligned}
	&u^{(1)}_{q_1}\cdots u^{(N-1)}_{q_{N-1}}\,u^{(N)}_{-q_N}\mathbf1 \\
	&\qquad=
	\sum_{j=0}^{q_N-1}\;
	\sum_{\substack{i_1,\dots,i_{N-2}\in\N\\ i_1+\cdots+i_{N-2}\le q_N-1-j}}
	\binom{q_{N-1}}{j}\,
	\Biggl(\prod_{p=1}^{N-2}\binom{q_p}{i_p}\Biggr)\,
	\frac{1}{\bigl(q_N-1-j-(i_1+\cdots+i_{N-2})\bigr)!} \\
	&\qquad\qquad\qquad\cdot
	L(-1)^{q_N-1-j-(i_1+\cdots+i_{N-2})}
	\Bigl(u^{(1)}_{q_1-i_1}\cdots u^{(N-2)}_{q_{N-2}-i_{N-2}}\,
	u^{(N-1)}_{q_{N-1}-j}\,u^{(N)}\Bigr).
	\end{aligned}
	\end{equation}
\end{lem}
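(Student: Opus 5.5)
The plan is to prove \eqref{eq:N-iterate-step1} by rewriting the innermost vacuum descendant with the creation and translation properties, and then moving one mode across a power of $L(-1)$. First, write $u^{(N)}_{-q_N}\mathbf1=\frac{1}{(q_N-1)!}L(-1)^{q_N-1}u^{(N)}$, which holds since $q_N\ge1$. Next, commute $a_{q}:=u^{(N-1)}_{q_{N-1}}$ past $L(-1)^{M}$, with $M=q_N-1$. The tool is the derivative commutator $[L(-1),a_q]=-q\,a_{q-1}$, equivalently $a_qL(-1)=L(-1)a_q+q\,a_{q-1}$. Iterating this gives the standard identity
\[
a_q\,L(-1)^M=\sum_{j=0}^{M}\binom{M}{j}\,\frac{q!}{(q-j)!}\,L(-1)^{M-j}\,a_{q-j},
\]
understood with falling factorials $q(q-1)\cdots(q-j+1)=j!\binom{q}{j}$, so that it is valid for all $q\in\Z$. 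It is cleanest to prove this by induction on $M$, or to read it off from
\[
a_q\,e^{yL(-1)}=e^{yL(-1)}\sum_{j\ge0}\binom{q}{j}y^j a_{q-j},
\]
which is the conjugation formula $e^{-yL(-1)}Y(a,x)e^{yL(-1)}=Y(a,x-y)$ expanded in nonnegative powers of $y$ and compared at the coefficient $x^{-q-1}$.

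With this identity, dividing by $(q_N-1)!$ and using $\binom{M}{j}\,j!\,/M!=1/(M-j)!$ yields exactly the coefficient $\binom{q_{N-1}}{j}\frac{1}{(q_N-1-j)!}$ in front of $L(-1)^{q_N-1-j}\bigl(u^{(N-1)}_{q_{N-1}-j}u^{(N)}\bigr)$. Applying $u^{(1)}_{q_1}\cdots u^{(N-2)}_{q_{N-2}}$ on the left gives \eqref{eq:N-iterate-step1}.

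For \eqref{eq:N-iterate-step2}, I will push each $u^{(p)}_{q_p}$, for $p=N-2,\dots,1$, successively across the remaining power of $L(-1)$ using the same identity. Each pass contributes a sum over $i_p$ with coefficient $\binom{q_p}{i_p}\frac{K!}{(K-i_p)!}$, where $K$ is the current exponent, and lowers the exponent by $i_p$. The factorial ratios telescope against the prefactor $\frac{1}{(q_N-1-j)!}$. What remains is $\prod_p\binom{q_p}{i_p}\cdot\frac{1}{(q_N-1-j-\sum i_p)!}$, with the constraint $\sum i_p\le q_N-1-j$.

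The obstacle is purely bookkeeping: the ordering of the successive commutations and the telescoping of the factorials. I would handle it by induction on the number of modes moved, using the exponential form above, which makes the product structure immediate:
\[
\Bigl(\prod_p a^{(p)}_{q_p}\Bigr)e^{yL(-1)}=e^{yL(-1)}\prod_p\sum_{i_p}\binom{q_p}{i_p}y^{i_p}a^{(p)}_{q_p-i_p}.
\]
The statement then follows by extracting the coefficient of $y^{q_N-1-j}$.
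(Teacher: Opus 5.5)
Your proposal is correct and follows the paper's proof: the creation property for $u^{(N)}_{-q_N}\mathbf1$, then the iterated translation identity $a_mL(-1)^t=\sum_{j}\binom{t}{j}m(m-1)\cdots(m-j+1)L(-1)^{t-j}a_{m-j}$ obtained by induction from $a_mL(-1)=L(-1)a_m+m\,a_{m-1}$, and finally successive application to the preceding modes from right to left. Your exponential form $a_q e^{yL(-1)}=e^{yL(-1)}\sum_{i\ge0}\binom{q}{i}y^i a_{q-i}$ is a generating-function packaging of the same identity; in the second step it replaces the paper's explicit multinomial-coefficient bookkeeping by extraction of the coefficient of $y^{q_N-1-j}$.
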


\begin{proof}
Use the creation property
$u^{(N)}_{-q_N}\mathbf1=L(-1)^{q_N-1}u^{(N)}/(q_N-1)!$.
Translation covariance gives $a_mL(-1)=L(-1)a_m+m a_{m-1}$;
induction on $t$ yields, for $a\in V$, $m\in\Z$, and $t\in\N$,
\begin{equation}\label{eq:mode-commute-L-1}
	a_m\,L(-1)^t
	=
	\sum_{j=0}^{t}
	\binom{t}{j}\,
	m(m-1)\cdots(m-j+1)\,
	L(-1)^{t-j}\,a_{m-j}.
	\end{equation}
where the empty product is $1$. Apply \eqref{eq:mode-commute-L-1} with
$a=u^{(N-1)}$, $m=q_{N-1}$, $t=q_N-1$, and use
\[
\frac{\binom{q_N-1}{j}\,q_{N-1}(q_{N-1}-1)\cdots(q_{N-1}-j+1)}{(q_N-1)!}
=\frac{\binom{q_{N-1}}j}{(q_N-1-j)!}.
\]
Left multiplication by the preceding modes proves \eqref{eq:N-iterate-step1}.
For \eqref{eq:N-iterate-step2}, put $t=q_N-1-j$ and apply
\eqref{eq:mode-commute-L-1} successively to the preceding modes, from right to left.
For $i_1+\cdots+i_{N-2}\le t$, the coefficient after division by $t!$ is
\[
\begin{aligned}
&\frac1{t!}\frac{t!}{i_1!\cdots i_{N-2}!\,(t-i_1-\cdots-i_{N-2})!}
\prod_{p=1}^{N-2}q_p(q_p-1)\cdots(q_p-i_p+1)\\
&\qquad=\frac{\prod_{p=1}^{N-2}\binom{q_p}{i_p}}{(t-i_1-\cdots-i_{N-2})!}.
\end{aligned}
\]
The remaining operator is
$L(-1)^{t-i_1-\cdots-i_{N-2}}u^{(1)}_{q_1-i_1}\cdots u^{(N-2)}_{q_{N-2}-i_{N-2}}$.
Multiplying by $\binom{q_{N-1}}j$ and substituting $t=q_N-1-j$
gives \eqref{eq:N-iterate-step2}.
\end{proof}

\begin{lem}[Expanding an $N$-iterate vacuum word against $R^{n}_{m,k}$]
	\label{lem:Niterate-word-Rmk-in-generators}
	Let $N\ge 2$. Let $u^{(1)},\dots,u^{(N)}\in V$ be homogeneous and let $z\in V$.
	Fix $n\in\N$ and $m,k\in\Z_{\ge0}$.
	Fix integers $q_1,\dots,q_{N-1}\in\Z$ and $q_N\in\N$ with $q_N\ge 1$.
	Set
	\[
	t:=q_N-1-j-(i_1+\cdots+i_{N-2})
	\qquad(\text{so }t\in\N\text{ on the summation range}).
	\]
	Then
	\begin{equation}\label{eq:Niterate-word-Rmk-in-generators}
	\begin{aligned}
	&\Bigl(u^{(1)}_{q_1}\cdots u^{(N-1)}_{q_{N-1}}\,u^{(N)}_{-q_N}\mathbf1\Bigr)\,R^{n}_{m,k}z \\
	&\quad=
	\sum_{j=0}^{q_N-1}
	\sum_{\substack{i_1,\dots,i_{N-2}\in\N\\ i_1+\cdots+i_{N-2}\le q_N-1-j}}
	\binom{q_{N-1}}{j}\,
	\Biggl(\prod_{p=1}^{N-2}\binom{q_p}{i_p}\Biggr)
	\\
	&\qquad\cdot
	\sum_{m'=m}^{m+t}
	\Biggl(
	\sum_{a=0}^{t-(m'-m)}
	(-1)^a\,
	\binom{n+k+\wt u^{(1)}+\cdots+\wt u^{(N)}-(q_1+\cdots+q_{N-1})+q_N-N}{a}
	\\
	&\qquad\qquad\qquad\qquad\cdot
	\binom{m+2n+t+1-a}{t-a}\,
	\binom{t-a}{m'-m}
	\Biggr)\;
	\Bigl(U_{j,\mathbf i}\,R^{n}_{m',k}z\Bigr),
	\end{aligned}
	\end{equation}
	where
	\[
	U_{j,\mathbf i}
	:=
	u^{(1)}_{q_1-i_1}\cdots u^{(N-2)}_{q_{N-2}-i_{N-2}}\,
	u^{(N-1)}_{q_{N-1}-j}\,u^{(N)}\in V.
	\]
\end{lem}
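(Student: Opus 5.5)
The plan is to combine the two preceding results directly. First, apply Lemma~\ref{lem:triple-iterate-derivative-expansion}, equation \eqref{eq:N-iterate-step2}, to rewrite the vacuum word as a finite linear combination of vectors $\frac{1}{t!}L(-1)^{t}U_{j,\mathbf i}$. The coefficients are $\binom{q_{N-1}}{j}\prod_p\binom{q_p}{i_p}$, and $t=q_N-1-j-(i_1+\cdots+i_{N-2})\in\N$. Since $R^n_{m,k}$ is not linear in its left argument in the naive sense (the kernel depends on $\wt$ of the left vector), I will first check that every summand is homogeneous of the same weight as the original word. That weight is
\[
W:=\wt u^{(1)}+\cdots+\wt u^{(N)}-(q_1+\cdots+q_{N-1})+q_N-N.
\]
This follows because $a_p$ raises weight by $\wt a-p-1$, and because the iterate formula preserves weight. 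Hence the residue operator $(\cdot)R^n_{m,k}z$, with the common exponent $\wt+n+k=W+n+k$, is linear on this sum.

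Next I handle each summand. If $t=0$ the term is simply $U_{j,\mathbf i}R^n_{m,k}z$. If $t\ge1$, write $\frac{1}{t!}L(-1)^tU_{j,\mathbf i}=(U_{j,\mathbf i})_{-(t+1)}\mathbf1$ by the creation property. Then apply Lemma~\ref{lem:ur1-Rmk-change-of-basis} with $u=U_{j,\mathbf i}$ and $r=t+1$. The lemma requires $U_{j,\mathbf i}$ homogeneous, which holds since each mode of a homogeneous vector maps homogeneous vectors to homogeneous vectors. Its weight is $\wt U_{j,\mathbf i}=W-t$, so
\[
\wt U_{j,\mathbf i}+n+k+r-1=W+n+k,
\]
which is exactly the top entry of the first binomial in the claimed formula. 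Substituting $r-1=t$ turns $\binom{m+2n+r-a}{r-1-a}$ into $\binom{m+2n+t+1-a}{t-a}$, and turns $\binom{r-1-a}{m'-m}$ into $\binom{t-a}{m'-m}$, with $m'$ ranging over $m\le m'\le m+t$. This matches the stated formula. The case $t=0$ is consistent with it: only $m'=m$ and $a=0$ survive, giving coefficient $1$, as is also the case in Lemma~\ref{lem:ur1-Rmk-change-of-basis} with $r=1$.

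Finally, collecting the coefficients from the two steps gives \eqref{eq:Niterate-word-Rmk-in-generators}. The main point needing care is the weight bookkeeping. I must verify that the weight exponent in each $U_{j,\mathbf i}R^n_{m',k}$ really is $\wt U_{j,\mathbf i}$ and not $W$; Lemma~\ref{lem:ur1-Rmk-change-of-basis} produces $uR^n_{m',k}$ with the intrinsic weight of $u$. I must also confirm that the shift $r-1=t$ exactly compensates the weight drop from removing $L(-1)^t$. The degenerate case where $U_{j,\mathbf i}=0$ is harmless.
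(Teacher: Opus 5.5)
Your proposal is correct and follows the paper's proof. You expand the word via \eqref{eq:N-iterate-step2}, apply Lemma~\ref{lem:ur1-Rmk-change-of-basis} with $r=t+1$ and $u=U_{j,\mathbf i}$, and use $\wt U_{j,\mathbf i}+t=W$ to make the first binomial independent of $(j,\mathbf i)$; your explicit check that all summands share the weight $W$, which justifies applying $R^n_{m,k}$ termwise, is a step the paper leaves implicit.
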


\begin{proof}
	Start from \eqref{eq:N-iterate-step2} and multiply on the right by $R^{n}_{m,k}z$:
	\[
	\Bigl(u^{(1)}_{q_1}\cdots u^{(N-1)}_{q_{N-1}}\,u^{(N)}_{-q_N}\mathbf1\Bigr)\,R^{n}_{m,k}z
	=
	\sum_{j,\mathbf i}
	\binom{q_{N-1}}{j}\Biggl(\prod_{p=1}^{N-2}\binom{q_p}{i_p}\Biggr)
	\frac{1}{t!}\;
	\Bigl(L(-1)^t U_{j,\mathbf i}\Bigr)\,R^{n}_{m,k}z,
	\]
	with $t=q_N-1-j-(i_1+\cdots+i_{N-2})$.

	Apply Lemma~\ref{lem:ur1-Rmk-change-of-basis} (with $r=t+1$ and $u=U_{j,\mathbf i}$)
	to rewrite the factor $\frac{1}{t!}\bigl(L(-1)^t U_{j,\mathbf i}\bigr)R^{n}_{m,k}z$ as a
	finite sum of $(U_{j,\mathbf i})R^{n}_{m',k}z$ for $m\le m'\le m+t$.
	This produces
\[
\begin{aligned}
\frac{1}{t!}\Bigl(L(-1)^t U_{j,\mathbf i}\Bigr)\,R^{n}_{m,k}z
&=
\sum_{m'=m}^{m+t}
\Biggl(
\sum_{a=0}^{t-(m'-m)}
(-1)^a\,
\binom{\wt U_{j,\mathbf i}+n+k+t}{a}\,
\binom{m+2n+t+1-a}{t-a} \\
&\qquad\qquad\cdot
\binom{t-a}{m'-m}
\Biggr)\,
\Bigl(U_{j,\mathbf i}\,R^{n}_{m',k}z\Bigr).
\end{aligned}
\]

	It remains to simplify $\wt U_{j,\mathbf i}+t$.
	Since $\wt(x_m y)=\wt x+\wt y-m-1$ for homogeneous $x,y$, iterating gives
	\[
	\begin{aligned}
	\wt U_{j,\mathbf i}
	&=
	\wt u^{(1)}+\cdots+\wt u^{(N)}
	-\sum_{p=1}^{N-2}(q_p-i_p+1)-(q_{N-1}-j+1) \\
	&=
	\wt u^{(1)}+\cdots+\wt u^{(N)}
	-\sum_{p=1}^{N-1}(q_p+1)+(j+i_1+\cdots+i_{N-2}).
	\end{aligned}
	\]
	Adding $t=q_N-1-j-(i_1+\cdots+i_{N-2})$ yields
	\[
	\wt U_{j,\mathbf i}+t
	=
	\wt u^{(1)}+\cdots+\wt u^{(N)}-(q_1+\cdots+q_{N-1})+q_N-N.
	\]
	Substituting this constant into the binomial $\binom{\wt U_{j,\mathbf i}+n+k+t}{a}$
	and inserting into the previous display gives \eqref{eq:Niterate-word-Rmk-in-generators}.
\end{proof}

\begin{lem}[Circle-subspace reduction for $u_i(v_{-r}\mathbf 1)$]
	\label{lem:On-ui-vk1-reduction}
	Fix $n\in\N$ and let $u,v\in V$ be homogeneous.  Fix $r\ge 1$ and $i\in\Z$.
	\begin{enumerate}
		\item[(a)] If $i\ge 0$, then
		\[
		O_n^\circ\!\bigl(u_i(v_{-r}\mathbf 1),V\bigr)
		\ \subseteq\
		\sum_{t\ge 0} O_n^\circ(u_t v,V).
		\]
		\item[(b)] If $i<0$, then
		\[
		O_n^\circ\!\bigl(u_i(v_{-r}\mathbf 1),V\bigr)
		\ \subseteq\
		\sum_{t\le -1} O_n^\circ(u_t v,V).
		\]
	\end{enumerate}
\end{lem}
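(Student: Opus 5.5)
The plan is to rewrite the vector $u_i(v_{-r}\mathbf 1)$ as a finite linear combination of $L(-1)$--descendants of the vectors $u_{i-j}v$. Then I would use linearity of the residues in the first argument, together with Lemma~\ref{lem:Onu-Lminus1-inclusion}, to land in the stated sums.

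\emph{Step 1 (expansion).} By the creation property, $v_{-r}\mathbf 1=\frac{1}{(r-1)!}L(-1)^{r-1}v$. Applying \eqref{eq:mode-commute-L-1} with $a=u$, $m=i$, $t=r-1$ gives the case $N=2$ of \eqref{eq:N-iterate-step1}:
\[
u_i(v_{-r}\mathbf 1)=\sum_{j=0}^{r-1}\binom{i}{j}\frac{1}{(r-1-j)!}\,L(-1)^{r-1-j}\bigl(u_{i-j}v\bigr).
\]
If $i\ge0$, then $\binom{i}{j}=0$ for $j>i$, so only the indices $t=i-j\ge0$ occur. If $i<0$, then every $j\ge0$ gives $t=i-j\le-1$. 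This is exactly the dichotomy between (a) and (b).

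\emph{Step 2 (homogeneity).} Each summand $L(-1)^{r-1-j}(u_{i-j}v)$ is homogeneous of weight
\[
(\wt u+\wt v-i+j-1)+(r-1-j)=\wt u+\wt v-i+r-2,
\]
which is independent of $j$ and equals $\wt\bigl(u_i v_{-r}\mathbf 1\bigr)$. Hence the kernel $(1+x)^{\wt+n+k}x^{-m-2n-2}$ in \eqref{eq:Rmk-def} is the same for all summands. Since $Y(\cdot,x)$ is linear, for every $z$ and every $m\ge k\ge0$,
\[
\bigl(u_i v_{-r}\mathbf 1\bigr)R^n_{m,k}z=\sum_j c_j\,\bigl(L(-1)^{r-1-j}u_{i-j}v\bigr)R^n_{m,k}z .
\]
Therefore $O_n^\circ(u_i v_{-r}\mathbf 1,V)\subseteq\sum_j O_n^\circ\bigl(L(-1)^{r-1-j}u_{i-j}v,V\bigr)$. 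Summands that vanish, or that have zero coefficient, contribute nothing. This equal-weight check is the only delicate point: $O_n^\circ(\cdot,V)$ is not additive across different weights, because the residue kernel depends on the weight.

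\emph{Step 3 (remove $L(-1)$).} Applying Lemma~\ref{lem:Onu-Lminus1-inclusion} $r-1-j$ times gives
\[
O_n^\circ\bigl(L(-1)^{r-1-j}u_{i-j}v,V\bigr)\subseteq O_n^\circ(u_{i-j}v,V).
\]
The lemma is stated for homogeneous vectors, and each intermediate vector $L(-1)^s u_{i-j}v$ is homogeneous, so the iteration is legitimate. Summing over the allowed $j$ yields (a) when $i\ge0$ and (b) when $i<0$. The only step needing care is the uniform weight computation in Step~2; the rest is bookkeeping with results already established.
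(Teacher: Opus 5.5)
Your proof is correct and is essentially the paper's argument. The paper invokes the $N=2$ case of Lemma~\ref{lem:Niterate-word-Rmk-in-generators}, which is the same translation expansion \eqref{eq:N-iterate-step1} followed by Lemma~\ref{lem:ur1-Rmk-change-of-basis} to remove the $L(-1)$-powers explicitly, producing terms $(u_{i-j}v)R^n_{m',k}z$ with $m'\ge m\ge k$; you instead remove them with the inclusion of Lemma~\ref{lem:Onu-Lminus1-inclusion}, while the vanishing of $\binom{i}{j}$ for $j>i$ gives the (a)/(b) dichotomy in both, and your equal-weight check corresponds to that lemma's computation of $\wt U_{j,\mathbf i}+t$.
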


\begin{proof}
	By Definition~\ref{def:Onu-subspace}, the subspace $O_n^\circ\!\bigl(u_i(v_{-r}\mathbf 1),V\bigr)$
	is spanned by the elements
	\[
	\bigl(u_i(v_{-r}\mathbf 1)\bigr)\,R^n_{m,k}z
	\qquad(z\in V,\ m,k\in\N,\ m\ge k).
	\]
	Fix such $z,m,k$.  Apply Lemma~\ref{lem:Niterate-word-Rmk-in-generators} with $N=2$,
	$u^{(1)}=u$, $u^{(2)}=v$, $q_1=i$, and $q_2=r$.
	Then \eqref{eq:Niterate-word-Rmk-in-generators} expresses
	\[
	\bigl(u_i(v_{-r}\mathbf 1)\bigr)\,R^n_{m,k}z
	\]
	as a (finite) $\C$--linear combination of terms
	\[
	(u_{i-j}v)\,R^n_{m',k}z
	\qquad(0\le j\le r-1,\ \ m\le m'\le m+(r-1-j)).
	\]
	Each such term lies in $O_n^\circ(u_{i-j}v,V)$ by Definition~\ref{def:Onu-subspace}.
	Hence
	\[
	O_n^\circ\!\bigl(u_i(v_{-r}\mathbf 1),V\bigr)\ \subseteq\ \sum_{j=0}^{r-1} O_n^\circ(u_{i-j}v,V).
	\]

	If $i\ge0$, the coefficient $\binom{i}{j}$ in
	\eqref{eq:Niterate-word-Rmk-in-generators} is zero for $j>i$. Thus the same expansion gives the sharper inclusion
	\[
	O_n^\circ\!\bigl(u_i(v_{-r}\mathbf1),V\bigr)
	\subseteq\sum_{j=0}^{\min\{i,r-1\}}O_n^\circ(u_{i-j}v,V)
	\subseteq\sum_{t\ge0}O_n^\circ(u_t v,V).
	\]
	If $i<0$, then $i-j\le -1$ for all $0\le j\le r-1$, so the right-hand side is contained in
	$\sum_{t\le -1}O_n^\circ(u_t v,V)$.
	This proves (a) and (b).
\end{proof}

\begin{cor}[First reduction for $O_n^\circ(N,S,V)$: $k=0$ and last mode $-1$]\label{cor:OnNSV-first-reduction}
	Fix $n\in\N$ and a finite homogeneous subset $S\subset V$, and let $N\ge1$.
	Recall $O_n^\circ(N,S,V)$ from Definition~\ref{def:OnN-SV}.
	With notation as in Definition~\ref{def:OnN-SV}, define the subset of vacuum words whose \emph{last} vacuum mode is equal to $-1$ by
	\[
	\mathcal P^{(-1)}_{\le N}(S)
	:=
	\Bigl\{
	w=s^{(1)}_{-m_1}\cdots s^{(r-1)}_{-m_{r-1}}\,s^{(r)}_{-1}\mathbf1
	\ \Big|\
	1\le r\le N,\ s^{(j)}\in S,\ m_1,\dots,m_{r-1}\in\Z
	\Bigr\}.
	\]
	Then
	\[
	O_n^\circ(N,S,V)
	=
	\Span_\C\Bigl\{
	w\,R^n_{m,0}z\ \Big|\ w\in\mathcal P^{(-1)}_{\le N}(S),\ m\in\Z_{\ge0},\ z\in V
	\Bigr\}.
	\]
	Equivalently: in Definition~\ref{def:OnN-SV} it is sufficient to restrict to $k=0$
	(using Lemma~\ref{Zhu-lemma}) and to restrict to words whose last factor is
	$s^{(r)}_{-1}\mathbf1$ (i.e.\ last vacuum mode $-1$), rather than $s^{(r)}_{-m_r}\mathbf1$ with
	$m_r\ge1$.
\end{cor}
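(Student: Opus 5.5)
The inclusion ``$\supseteq$'' is immediate, because $\mathcal P^{(-1)}_{\le N}(S)\subseteq\mathcal P_{\le N}(S)$ and $k=0$ is an admissible choice in \eqref{eq:OnNSV-def}. For ``$\subseteq$'' the plan has two steps: first remove $k$, then replace a general last mode $-m_r$ by $-1$.

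\emph{Step 1 (removing $k$).} Fix a generator $wR^n_{m,k}z$ of $O_n^\circ(N,S,V)$ with $w\in\mathcal P_{\le N}(S)$ and $m\ge k\ge0$. Lemma~\ref{Zhu-lemma} gives
\[
wR^n_{m,k}z=\sum_{t=0}^{k}\binom{k}{t}wR^n_{m-t,0}z ,
\]
and every index $m-t$ is still $\ge0$. So it suffices to treat generators $wR^n_{m,0}z$.

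\emph{Step 2 (last mode $-1$).} Write $w=s^{(1)}_{-m_1}\cdots s^{(r)}_{-m_r}\mathbf1$ with $m_r\ge1$. There are two cases.

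\begin{itemize}
\item If $r=1$, then $w=s_{-m_1}\mathbf1$ with $m_1\ge 1$. Lemma~\ref{lem:ur1-Rmk-change-of-basis} with $k=0$ expresses $(s_{-m_1}\mathbf1)R^n_{m,0}z$ as a combination of $sR^n_{m',0}z$ for $m\le m'\le m+m_1-1$. Since $s=s_{-1}\mathbf1$, each of these has last mode $-1$.
\item If $r\ge2$, apply Lemma~\ref{lem:Niterate-word-Rmk-in-generators} with $N=r$, $q_p=-m_p$ for $p<r$, $q_r=m_r$, and $k=0$. This writes $wR^n_{m,0}z$ as a finite linear combination of elements $U_{j,\mathbf i}R^n_{m',0}z$, where
\[
U_{j,\mathbf i}=s^{(1)}_{-m_1-i_1}\cdots s^{(r-1)}_{-m_{r-1}-j}\,s^{(r)} .
\]
Because $s^{(r)}=s^{(r)}_{-1}\mathbf1$, each $U_{j,\mathbf i}$ is a vacuum word of the same length $r\le N$. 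Its first $r-1$ modes are arbitrary integers, which is allowed since $\mathcal P^{(-1)}_{\le N}(S)$ puts no sign restriction on them, and its last mode is $-1$. Hence $U_{j,\mathbf i}\in\mathcal P^{(-1)}_{\le N}(S)$, and all the residues appearing have $k=0$.
\end{itemize}

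\emph{Points to check.} The expansion in Lemma~\ref{lem:Niterate-word-Rmk-in-generators} needs $U_{j,\mathbf i}$ to be homogeneous, so that the $R^n$ residues are defined with the correct weight. This holds because each mode of a homogeneous vector shifts weight by a fixed amount, and the lemma already computes $\wt U_{j,\mathbf i}$ explicitly. The only real subtlety is bookkeeping: we must confirm that the proof of that lemma, which applies Lemma~\ref{lem:ur1-Rmk-change-of-basis} to $\frac1{t!}L(-1)^tU_{j,\mathbf i}=(U_{j,\mathbf i})_{-t-1}\mathbf1$, preserves $k$ (here $k=0$) and only raises $m$ to $m'\ge m\ge0$. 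Given that, the two inclusions combine to prove the stated equality.
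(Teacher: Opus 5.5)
Your proposal is correct and follows the paper's proof essentially step for step. You reduce to $k=0$ by Lemma~\ref{Zhu-lemma}, handle $r=1$ with Lemma~\ref{lem:ur1-Rmk-change-of-basis} (using $s=s_{-1}\mathbf1$), handle $r\ge2$ with Lemma~\ref{lem:Niterate-word-Rmk-in-generators} at $q_p=-m_p$, $q_r=m_r$, $k=0$, and note that the reverse inclusion is immediate; your checks that every $U_{j,\mathbf i}$ shares the weight of $w$ and that $k$ is preserved with $m'\ge m$ are exactly what makes that lemma applicable.
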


\begin{proof}
	By Lemma~\ref{Zhu-lemma}, the span defining $O_n^\circ(N,S,V)$ is unchanged if we restrict
	to generators with $k=0$, so it suffices to consider elements
	\[
	\Bigl(s^{(1)}_{-m_1}\cdots s^{(r)}_{-m_r}\mathbf1\Bigr)\,R^n_{m,0}z,
	\qquad
	1\le r\le N,\ s^{(j)}\in S,\ m_1,\dots,m_{r-1}\in\Z,\ m_r\in\Z_{\ge1}.
	\]
	For $r=1$, Lemma~\ref{lem:ur1-Rmk-change-of-basis}, applied with $u=s^{(1)}$ and with $m_1$ in place of its descendant index (and $k=0$),
	reduces the generator to residues attached to $s^{(1)}=s^{(1)}_{-1}\mathbf1$.
	For $r\ge2$, apply Lemma~\ref{lem:Niterate-word-Rmk-in-generators} with
	$N=r$, $u^{(p)}=s^{(p)}$, $k=0$,
	\[
	q_p=-m_p\ \ (1\le p\le r-1),\qquad q_r=m_r,
	\]
	and the same $m\in\Z_{\ge0}$ and $z\in V$ as on the right of $R^n_{m,0}$.
	Lemma~\ref{lem:Niterate-word-Rmk-in-generators} expresses
	\[
	\Bigl(s^{(1)}_{-m_1}\cdots s^{(r)}_{-m_r}\mathbf1\Bigr)\,R^n_{m,0}z
	\]
	as a finite $\C$--linear combination of terms of the form
	\(
	U_{j,\mathbf i}\,R^n_{m',0}z
	\)
	(with various $m'\ge m$), where
	\[
	U_{j,\mathbf i}
	=
	s^{(1)}_{q_1-i_1}\cdots s^{(r-2)}_{q_{r-2}-i_{r-2}}\,
	s^{(r-1)}_{q_{r-1}-j}\,s^{(r)}.
	\]
	Thus each term lies in the span of generators $w\,R^n_{m',0}z$ with
	$w\in\mathcal P^{(-1)}_{\le N}(S)$.
	Conversely, each generator $w\,R^n_{m,0}z$ with
	$w\in\mathcal P^{(-1)}_{\le N}(S)$ is among the defining generators of
	$O_n^\circ(N,S,V)$.  Hence the right-hand span is contained in
	$O_n^\circ(N,S,V)$, and the claimed equality of spans follows.
\end{proof}

\begin{rem}[Mode indices in singular and regular iterate expansions]\label{rem:YL-safe-vs-regular}
	A key point in the computation of $O_n^\circ(2,\{u,v\},V)$ is that the circle-type relations
have truncated representatives involving only modes with indices at least $-2n-1$.
We distinguish expansions that preserve this lower bound from those that introduce
mode indices less than $-2n-1$.

\smallskip
\noindent{\rm(1) Singular iterates \((i\ge0)\).}
For \(i\ge0\), the standard identity
\begin{equation}\label{eq:LL-iterate-to-comm}
(u_i v)_j
=
\sum_{p=0}^{i}
(-1)^p\binom{i}{p}\bigl[\,u_{i-p},v_{j+p}\,\bigr]
\end{equation}
expresses each coefficient \((u_i v)_j\) as a finite sum of commutators.  If
\(j\ge-2n-1\), then \(i-p\ge0\ge-2n-1\) and \(j+p\ge j\ge-2n-1\).  Thus
singular iterate expansions involve only modes with indices at least $-2n-1$.

	\smallskip
	\noindent{\rm(2) Regular iterates ($r\ge1$).}
	By contrast, for $r\ge1$ the modes $(u_{-r}v)_j$ (even with $j\ge -2n-1$) generally expand
	into quadratic monomials with mode indices less than $-2n-1$.  For this reason we do \emph{not} expand $Y_L(u_{-r}v,x)$ directly.
	Instead we use the one-step replacement
	\begin{equation}\notag
	Y(u_{-r}v,x)
	\;\equiv_{U}\;
	\sum_{i\ge0} F^{+}_{i,r,u}(x,x)\,Y_L(u_i v,x)
	+
	Y_L(v,x)\,Y_L(u_{-r}\mathbf{1},x),
	\end{equation}
from Lemma~\ref{lem:one-step-general} (see \eqref{eq:one-step-YL-U}), which expresses the regular iterate in terms of
singular iterate fields together with a vacuum-descendant field.
After expanding the right-hand side, every quadratic monomial has mode indices
at least $-2n-1$, leaving the regular quadratic relations to be analyzed.
\end{rem}
\begin{lem}[Quadratic reduction for $O_n^\circ(2,\{u,v\},V)$]\label{lem:On2-cutoff-reduction}
	Fix $n\in\N$, let $V$ be a vertex operator algebra, and let
	$u,v\in V$ be homogeneous.  Set $S=\{u,v\}$.  Put
	\[
	\Pi=\{(u,u),(u,v),(v,v)\}.
	\]
	Let $U$ be the linear subspace generated by the circle-type relations coming from
	$u$, $v$, and the singular iterates attached to the three pair types in $\Pi$:
	\[
	U
	:=
	O_n^\circ(u,V)+O_n^\circ(v,V)
	+
	\sum_{(s,t)\in\Pi}\ \sum_{i\ge0}O_n^\circ(s_i t,V).
	\]
	Then
	\[
	O_n^\circ(2,\{u,v\},V)^+=U.
	\]
	Moreover, the quotient $O_n^\circ(2,\{u,v\},V)/U$ is spanned by the classes of
	relations
	\[
	(s_{-r}t)R^n_{m,0}z,
	\qquad
	(s,t)\in\Pi,\quad r\ge1,\quad m\ge0,\quad z\in V.
	\]
	In particular, after imposing the relations in $U$, the only remaining
	length-$2$ circle-type relations are the true regular quadratic iterates for the
	three ordered pair types
	\[
	u_{-r}u,\qquad u_{-r}v,\qquad v_{-r}v
	\qquad (r\ge1).
	\]
The reversed cross-pair \(v_q u\), \(q\in\Z\), contributes no additional pair
type; by skew-symmetry it is reduced to singular iterates \(u_i v\), \(i\ge0\),
and regular iterates \(u_{-r'}v\), \(r'\ge1\), up to \(L(-1)\)-descendant
reductions.
\end{lem}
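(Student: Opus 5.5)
The proof establishes the two inclusions $U\subseteq O_n^\circ(2,S,V)^+$ and $O_n^\circ(2,S,V)^+\subseteq U$. It then analyses the length-two generators with all modes negative, which are the ones not in the plus subspace.

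By Corollary~\ref{cor:OnNSV-first-reduction}, both $O_n^\circ(2,S,V)$ and its plus subspace are spanned by generators $wR^n_{m,0}z$ with $k=0$ and last vacuum mode $-1$. The same argument, run inside the plus subspace, uses Lemma~\ref{lem:Niterate-word-Rmk-in-generators} with $N=2$. There the first mode only shifts from $q_1$ to $q_1-j$ with $j\ge0$, so a nonpositive-index condition $m_1\le0$ need not be preserved. It is therefore cleaner to work directly from the full words $s_{q}(t_{-r}\mathbf1)$ and invoke Lemma~\ref{lem:On-ui-vk1-reduction}.

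\emph{Inclusion $O_n^\circ(2,S,V)^+\subseteq U$.} Length-one words $s_{-m}\mathbf1$, $s\in S$, give residues in $O_n^\circ(s,V)$; this uses Lemma~\ref{lem:circle-from-modes}, or Lemma~\ref{lem:ur1-Rmk-change-of-basis} together with Lemma~\ref{Zhu-lemma}. A length-two word in the plus subspace has the form $s_{q}(t_{-r}\mathbf1)$ with $q=-m_1\ge0$. By Lemma~\ref{lem:On-ui-vk1-reduction}(a), its residues lie in $\sum_{i\ge0}O_n^\circ(s_it,V)$. If $(s,t)\in\Pi$ we are done. If $(s,t)=(v,u)$, skew-symmetry gives
\[
v_iu=\sum_{p\ge0}(-1)^{i+p+1}\frac{1}{p!}L(-1)^p u_{i+p}v .
\]
The right side is a combination of $L(-1)$-descendants of singular iterates $u_{i'}v$ with $i'\ge0$. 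By Lemma~\ref{lem:Onu-Lminus1-inclusion}, and by linearity of $O_n^\circ(\cdot,V)$ in its first argument on each homogeneous weight component, $O_n^\circ(v_iu,V)\subseteq\sum_{i'\ge0}O_n^\circ(u_{i'}v,V)$. All summands have the common weight $\wt u+\wt v-i-1$, so no mixing of weights occurs.

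\emph{Inclusion $U\subseteq O_n^\circ(2,S,V)^+$.} We have $O_n^\circ(s,V)$ spanned by length-one words. For $(s,t)\in\Pi$ and $i\ge0$, the element $s_it=s_i t_{-1}\mathbf1$ is a length-two word with first mode $i=-m_1$, so $m_1\le0$, and it lies in the plus subspace by definition. This gives equality.

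\emph{The quotient.} The remaining generators are $(s_{-r}t_{-r'}\mathbf1)R^n_{m,0}z$ with $r,r'\ge1$. Lemma~\ref{lem:Niterate-word-Rmk-in-generators} rewrites them as combinations of $(s_{-r-j}t)R^n_{m',0}z$, all regular iterates, and Lemma~\ref{lem:On-ui-vk1-reduction}(b) confirms this. For $(s,t)=(v,u)$, skew-symmetry writes $v_{-r}u$ as a combination of $L(-1)^p u_{-r+p}v$. These are $L(-1)$-descendants of regular iterates $u_{-r'}v$ and of singular ones $u_iv$, $i\ge0$. Lemma~\ref{lem:Onu-Lminus1-inclusion} then places the residues of the former in the span of $(u_{-r'}v)R^n_{m',k}z$ (with $k=0$ recovered via Lemma~\ref{Zhu-lemma}), and those of the latter in $U$.

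\emph{Main obstacle.} The delicate point is this reversed-pair bookkeeping. Lemma~\ref{lem:Onu-Lminus1-inclusion} is stated for a single homogeneous vector, but the skew-symmetry expansion gives a sum $\sum_p c_pL(-1)^p(\cdot)$ of such vectors. I will handle this by applying Lemma~\ref{lem:ur1-Rmk-change-of-basis} to each summand $\frac{1}{p!}L(-1)^pu_{i+p}v=(u_{i+p}v)_{-p-1}\mathbf1$ separately. All summands share the same total weight, so the residue kernel $(1+x)^{\wt+n+k}x^{-m-2n-2}$ is common and the residue is additive across the summands.
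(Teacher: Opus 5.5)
Your proposal is correct and follows essentially the same route as the paper: for $O_n^\circ(2,\{u,v\},V)^+=U$ you use Lemma~\ref{lem:On-ui-vk1-reduction}(a), skew-symmetry, and Lemma~\ref{lem:Onu-Lminus1-inclusion} applied summand by summand on a single weight space, and for the quotient you use the $N=2$ expansion of Lemma~\ref{lem:Niterate-word-Rmk-in-generators} followed by skew-symmetry for the reversed pair $v_{-r}u$. One aside is inaccurate but harmless: the condition $m_1\le 0$ is in fact preserved by that expansion, because for $q_1\ge 0$ the coefficient $\binom{q_1}{j}$ vanishes when $j>q_1$ (the observation behind Lemma~\ref{lem:On-ui-vk1-reduction}(a)), and your argument never relies on that remark.
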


\begin{proof}
	\emph{The equality $O_n^\circ(2,\{u,v\},V)^+=U$.}  We use that $R^n_{m,k}$ is linear in its left
	argument on each weight space, so $O_n^\circ(\sum_\rho c_\rho,V)\subseteq\sum_\rho O_n^\circ(c_\rho,V)$
	for homogeneous $c_\rho$ of a common weight.  By Definition~\ref{def:OnN-SV}, the plus subspace
	is spanned by the relations $wR^n_{m,k}z$ with either $w=s_{-m_1}\mathbf1$, $s\in\{u,v\}$,
	$m_1\ge1$, or $w=s_it_{-k'}\mathbf1$, $s,t\in\{u,v\}$, $i\ge0$, $k'\ge1$.  In the first case
	$s_{-m_1}\mathbf1=\frac{1}{(m_1-1)!}L(-1)^{m_1-1}s$, and Lemma~\ref{lem:Onu-Lminus1-inclusion},
	applied $m_1-1$ times, gives $O_n^\circ(w,V)\subseteq O_n^\circ(s,V)\subseteq U$.  In the
	second case Lemma~\ref{lem:On-ui-vk1-reduction}\textup{(a)} gives
	$O_n^\circ(w,V)\subseteq\sum_{q\ge0}O_n^\circ(s_qt,V)$, which lies in $U$ when $(s,t)\in\Pi$.
	When $(s,t)=(v,u)$, the skew-symmetry formula for $v_qu$ displayed below, with $q\ge0$,
	writes $v_qu$ as a finite sum of the homogeneous vectors
	$\frac{(-1)^{q+j+1}}{j!}L(-1)^j(u_{q+j}v)$, all of weight $\wt u+\wt v-q-1$; with
	Lemma~\ref{lem:Onu-Lminus1-inclusion} this gives
	$O_n^\circ(v_qu,V)\subseteq\sum_{j\ge0}O_n^\circ(u_{q+j}v,V)\subseteq U$.  Hence the plus
	subspace is contained in $U$.  Conversely, $O_n^\circ(s,V)=O_n^\circ(s_{-1}\mathbf1,V)$ and
	$O_n^\circ(s_it,V)=O_n^\circ(s_it_{-1}\mathbf1,V)$ with $i\ge0$ are spanned by generators of the
	plus subspace, so $U$ is contained in it.

	\smallskip
	\emph{The quotient.}
	By Corollary~\ref{cor:OnNSV-first-reduction}, $O_n^\circ(2,\{u,v\},V)$ is
	spanned by relations
	\[
	wR^n_{m,0}z,
	\qquad
	m\ge0,\quad z\in V,
	\]
	where $w$ ranges over vacuum words of length at most $2$ in modes of $u$ and
	$v$ with last vacuum mode $-1$.

	Words of length $1$ give precisely $O_n^\circ(u,V)$ and
	$O_n^\circ(v,V)$, hence lie in $U$.

For words of length \(2\), the non-reversed pair types are represented by
\[
s_{-r}t,
\qquad
(s,t)\in\Pi,\quad r\in\Z.
\]
If \(r\le0\), then \(s_{-r}t=s_i t\) with \(i=-r\ge0\), so the corresponding
relations lie in \(U\) by definition.  Thus, modulo \(U\), only the regular
iterates \(s_{-r}t\) with \(r\ge1\) remain for the three pair types in \(\Pi\).

It remains to treat the reversed cross-pair.  For \(q\in\Z\), skew-symmetry gives
\[
Y(v,x)u=e^{xL(-1)}Y(u,-x)v,
\]
and extracting the coefficient of \(x^{-q-1}\) gives
\[
v_q u
=
\sum_{j\ge0}
\frac{(-1)^{q+j+1}}{j!}\,
L(-1)^j\bigl(u_{q+j}v\bigr).
\]
Only finitely many terms are nonzero by truncation.  If \(q\ge0\), then every
\(u_{q+j}v\) is a singular iterate, so the corresponding circle-type relations
lie in \(U\), after applying Lemma~\ref{lem:Onu-Lminus1-inclusion} repeatedly
to remove the \(L(-1)\)-descendants.  If \(q<0\), say \(q=-r\) with \(r\ge1\),
then the terms with \(j\ge r\) are again \(L(-1)\)-descendants of singular
iterates and hence lie in \(U\), while the terms with \(j<r\) are
\(L(-1)\)-descendants of regular iterates \(u_{-(r-j)}v\).  By
Lemma~\ref{lem:ur1-Rmk-change-of-basis}, applied repeatedly, these
\(L(-1)\)-powers may be removed from the left input of the residue relation, at
the expense of replacing \(R^n_{m,0}\) by a finite linear combination of
operators \(R^n_{m',0}\).  Hence, modulo \(U\), every relation coming from the
reversed cross-pair is either zero or a finite linear combination of relations
coming from \(u_{-r'}v\) with \(r'\ge1\).

Therefore, modulo \(U\), the length-\(2\) quotient is spanned by the regular
relations
\[
(s_{-r}t)R^n_{m,0}z,
\qquad
(s,t)\in\Pi,\quad r\ge1,\quad m\ge0,\quad z\in V,
\]
as claimed.
\end{proof}

\section{Examples and Outlook}\label{sec:examples}
\subsection{Length-one and Virasoro quadratic examples}
We now illustrate the finite-length formalism in the first two word lengths.  The first example is the length-one case and recovers the subspaces \(O_n^\circ(u,V)\) already studied above.  The second example records the Virasoro length-two window and spells out the \(n=1\) and \(n=2\) relations explicitly.
\begin{Exa}[Length one]\label{exa:length-one-Onu}
For homogeneous $u\in V$, Lemma~\ref{lem:ur1-Rmk-change-of-basis}
reduces each residue attached to $u_{-r}\mathbf1$ to residues attached
to $u$. Conversely $u=u_{-1}\mathbf1$. Reducing $k$ to zero by
Lemma~\ref{Zhu-lemma} therefore gives
\begin{equation}\label{eq:length-one-circle-space}
O_n^\circ(1,\{u\},V)
=\Span_\C\{uR^n_{m,0}z:\ m\ge0,\ z\in V\}
=O_n^\circ(u,V).
\end{equation}
\end{Exa}

\begin{Exa}[Virasoro quadratic windows]\label{exa:virasoro-quadratic-window}
	Let \(V_c\subset V\) be the Virasoro vertex operator subalgebra with conformal vector
	\(\omega\), and recall
	\[
	Y(\omega,x)=\sum_{q\in\Z}L(q)x^{-q-2},
	\qquad
	\omega_j=L(j-1),
	\qquad
	\wt\omega=2.
	\]
	Set \(S=\{\omega\}\).  We consider the length-two quotient
	\[
	O_n^\circ(2,\{\omega\},V)/
	O_n^\circ(2,\{\omega\},V)^+.
	\]

	The singular Virasoro iterates are
	\[
	\omega_0\omega=L(-1)\omega,\qquad
	\omega_1\omega=2\omega,\qquad
	\omega_2\omega=0,\qquad
	\omega_3\omega=\frac c2\mathbf1,
	\qquad
	\omega_i\omega=0\quad(i\ge4).
	\]
	Hence the plus part is generated by the length-one relations coming from \(\omega\),
	together with the relations coming from these singular iterates.  The
	\(L(-1)\omega\) term is already contained in \(O_n^\circ(\omega,V)\) by
	Lemma~\ref{lem:Onu-Lminus1-inclusion}, and the \(\omega_1\omega=2\omega\)
	term gives no new subspace.  The vacuum term also contributes no nonzero
	relations.  Indeed, for \(z\in V\) and \(m\ge k\ge0\),
	\[
	\mathbf1\,R^n_{m,k}z
	=
	\Res_x\frac{(1+x)^{n+k}}{x^{m+2n+2}}z
	=
	[x^{m+2n+1}](1+x)^{n+k}z
	=
	0,
	\]
	because
	\[
	m+2n+1\ge k+2n+1>n+k.
	\]
	Therefore, at the induced-subspace level,
	\[
	O_n^\circ(2,\{\omega\},V)^+
	=
	O_n^\circ(1,\{\omega\},V)=	O_n^\circ(\omega,V).
	\]

By Lemma~\ref{lem:On2-cutoff-reduction}, after quotienting by
\(O_n^\circ(\omega,V)\), the induced length-two quotient is represented by
the true regular quadratic Virasoro iterates
\[
(\omega_{-r}\omega)R^n_{m,0}z,
\qquad
r\ge1,\quad m\ge0,\quad z\in V.
\]
At the operator level, the finite spanning window is most naturally expressed
using the two boundary strips \(k=0\) and \(k=n\).
	The point of the \(Y_L\)-formalism is that these regular iterate fields are not
	expanded directly.  Instead, Lemma~\ref{lem:one-step-general}, applied with
	\(u=v=\omega\), gives the congruence
	\[
	Y(\omega_{-r}\omega,x)
	\equiv_{\,O_n^\circ(\omega,V)\,}
	\widehat Y^{(n)}_{r,\omega}(x),
	\]
	where
	\[
	\widehat Y^{(n)}_{r,\omega}(x)
	:=
	\sum_{i\ge0}F^+_{i,r,\omega}(x,x)Y_L(\omega_i\omega,x)
	+
	Y_L(\omega,x)Y_L(\omega_{-r}\mathbf1,x).
	\]
	Using the Virasoro singular iterates, this becomes
	\[
	\begin{aligned}
	\widehat Y^{(n)}_{r,\omega}(x)
	={}&
	F^+_{0,r,\omega}(x,x)Y_L(L(-1)\omega,x)
	+
	2F^+_{1,r,\omega}(x,x)Y_L(\omega,x)  \\
	&+
	\frac c2F^+_{3,r,\omega}(x,x)\Id
	+
	Y_L(\omega,x)Y_L(\omega_{-r}\mathbf1,x).
	\end{aligned}
	\]
Here, by Convention~\ref{conv:descendant-derivative}, the final factor is the derivative field
	\[
Y_L(\omega_{-r}\mathbf1,x)
=
\frac{1}{(r-1)!}\partial_x^{r-1}Y_L(\omega,x).
	\]
	Thus the replacement residue associated to \(\omega_{-r}\omega\) is
	\[
	(\omega_{-r}\omega)R^{n,\widehat Y}_{m,k}z
	:=
	\Res_x
	\frac{(1+x)^{r+3+n+k}}{x^{m+2n+2}}\,
	\widehat Y^{(n)}_{r,\omega}(x)z.
	\]
	This residue is congruent to the original circle-type relation
	\[
	(\omega_{-r}\omega)R^n_{m,k}z
	\]
	modulo \(O_n^\circ(\omega,V)\), by Proposition~\ref{prop:Rmk-replacement-in-On}.
	Therefore the quadratic relations may be computed using
	\(\widehat Y^{(n)}_{r,\omega}(x)\), which only involves the
		field \(Y_L(\omega,x)\), its vacuum descendants, and singular Virasoro data.
		Thus the problem is reduced to finding a finite set of triples \((r,m,k)\)
		whose residues, applied to arbitrary inputs, span the quadratic
relations modulo \(O_n^\circ(\omega,V)\).

The displayed representatives below are computed here. It can be shown,
using the equal-input reduction of \cite{Part2}, that
modulo $O_n^\circ(\omega,V)$ all quadratic residues reduce to
\begin{equation}\label{eq:virasoro-quadratic-spanning-window}
(\omega_{-r}\omega)R^n_{m,k}z,
\qquad n\le m\le2n-1,\quad 1\le r\le2n-m,\quad r\text{ odd},\quad k\in\{0,n\}.
\end{equation}
The operators are applied to arbitrary $z\in V$.
For every $r\ge1$ and $m\ge k\ge0$, it can be shown using \cite{Part2} that the singular replacement term
\[
\Res_x
\frac{(1+x)^{r+3+n+k}}{x^{m+2n+2}}
\sum_{i\ge0}F^+_{i,r,\omega}(x,x)Y_L(\omega_i\omega,x)
\]
is zero in $V$.  Thus the
representatives below come from the product term
\[
\Res_x
\frac{(1+x)^{r+3+n+k}}{x^{m+2n+2}}
Y_L(\omega,x)Y_L(\omega_{-r}\mathbf1,x),
\]
in accordance with \eqref{eq:conj-first-term-residue}.
Expanding the two $p$-series by \eqref{eq:pj-def} and applying
the residue kernel \eqref{eq:Rmk-def} gives the following explicit
operator congruences:
\[
\begin{aligned}
(\omega_{-r}\omega)R^n_{m,0}(\cdot)
&\equiv_{O_n^\circ(\omega,V)}
\sum_{\alpha,\beta\ge -2n-1}
\sum_{d=-\alpha-1}^{2n}
\sum_{e=-\beta-r}^{2n}
\sum_{t=0}^{\min(r-1,\,2n-e)}
\binom{-n}{m+2n+1-d-e}
 \\
&{}\cdot \binom{n+2}{d+\alpha+1}
\binom{n+2}{e+\beta+1+t}
\binom{e+t}{t}
\binom{e-n-2}{r-1-t}
L(\alpha-1)L(\beta-1),
\end{aligned}
\]
and
\[
\begin{aligned}
(\omega_{-r}\omega)R^n_{m,n}(\cdot)
&\equiv_{O_n^\circ(\omega,V)}
\sum_{\alpha,\beta\ge -2n-1}
\sum_{d=-\alpha-1}^{2n}
\sum_{e=-\beta-r}^{2n}
\sum_{t=0}^{\min(r-1,\,2n-e)}
\binom{0}{m+2n+1-d-e}
\\
&{}\cdot
\binom{n+2}{d+\alpha+1}
\binom{n+2}{e+\beta+1+t}
\binom{e+t}{t}
\binom{e-n-2}{r-1-t}
L(\alpha-1)L(\beta-1).
\end{aligned}
\]
The sums are interpreted after application to each input $z\in V$: lower truncation first makes the rightmost-mode sum finite, and then makes the leftmost-mode sum finite for each resulting vector. The two odd-indexed windows contain
$2\lfloor(n+1)^2/4\rfloor$ operators in total.

For \(n=1\), the two spanning relations are
\[
\begin{aligned}
(\omega_{-1}\omega)R^1_{1,0}(\cdot)
\equiv_{O_1^\circ(\omega,V)}
&L(-4)\bigl(L(-4)+2L(-3)+L(-2)\bigr) \\
&{}+L(-3)\bigl(2L(-4)+4L(-3)+2L(-2)\bigr) \\
&{}+L(-2)\bigl(L(-4)+2L(-3)+L(-2)\bigr),
\end{aligned}
\]
and
\[
\begin{aligned}
(\omega_{-1}\omega)R^1_{1,1}(\cdot)
\equiv_{O_1^\circ(\omega,V)}
&L(-4)\bigl(L(-4)+3L(-3)+3L(-2)+L(-1)\bigr) \\
&{}+L(-3)\bigl(3L(-4)+9L(-3)+9L(-2)+3L(-1)\bigr) \\
&{}+L(-2)\bigl(3L(-4)+9L(-3)+9L(-2)+3L(-1)\bigr) \\
&{}+L(-1)\bigl(L(-4)+3L(-3)+3L(-2)+L(-1)\bigr).
\end{aligned}
\]

For \(n=2\), the four spanning relations are
\[
\begin{aligned}
(\omega_{-1}\omega)R^2_{2,0}(\cdot)
\equiv_{O_2^\circ(\omega,V)}
&L(-6)\bigl(L(-5)+2L(-4)+L(-3)\bigr) \\
&{}+L(-5)\bigl(L(-6)+6L(-5)+9L(-4)+4L(-3)\bigr) \\
&{}+L(-4)\bigl(2L(-6)+9L(-5)+12L(-4)+5L(-3)\bigr) \\
&{}+L(-3)\bigl(L(-6)+4L(-5)+5L(-4)+2L(-3)\bigr),
\end{aligned}
\]

\[
\begin{aligned}
(\omega_{-1}\omega)R^2_{3,0}(\cdot)
\equiv_{O_2^\circ(\omega,V)}
&L(-6)\bigl(L(-6)+2L(-5)+L(-4)\bigr) \\
&{}+L(-5)\bigl(2L(-6)+3L(-5)-L(-3)\bigr) \\
&{}+L(-4)\bigl(L(-6)-3L(-4)-2L(-3)\bigr) \\
&{}+L(-3)\bigl(-L(-5)-2L(-4)-L(-3)\bigr),
\end{aligned}
\]
\[
\begin{aligned}
(\omega_{-1}\omega)R^2_{2,2}(\cdot)
&\equiv_{O_2^\circ(\omega,V)}
L(-6)\bigl(L(-5)+4L(-4)+6L(-3)+4L(-2)+L(-1)\bigr) \\
&{}+L(-5)\bigl(L(-6)+8L(-5)+22L(-4)+28L(-3)+17L(-2)+4L(-1)\bigr) \\
&{}+L(-4)\bigl(4L(-6)+22L(-5)+48L(-4)+52L(-3)+28L(-2)+6L(-1)\bigr) \\
&{}+L(-3)\bigl(6L(-6)+28L(-5)+52L(-4)+48L(-3)+22L(-2)+4L(-1)\bigr) \\
&{}+L(-2)\bigl(4L(-6)+17L(-5)+28L(-4)+22L(-3)+8L(-2)+L(-1)\bigr) \\
&{}+L(-1)\bigl(L(-6)+4L(-5)+6L(-4)+4L(-3)+L(-2)\bigr),
\end{aligned}
\]

and
\[
\begin{aligned}
(\omega_{-1}\omega)R^2_{3,2}(\cdot)
&\equiv_{O_2^\circ(\omega,V)}
L(-6)\bigl(L(-6)+4L(-5)+6L(-4)+4L(-3)+L(-2)\bigr) \\
&{}+L(-5)\bigl(4L(-6)+16L(-5)+24L(-4)+16L(-3)+4L(-2)\bigr) \\
&{}+L(-4)\bigl(6L(-6)+24L(-5)+36L(-4)+24L(-3)+6L(-2)\bigr) \\
&{}+L(-3)\bigl(4L(-6)+16L(-5)+24L(-4)+16L(-3)+4L(-2)\bigr) \\
&{}+L(-2)\bigl(L(-6)+4L(-5)+6L(-4)+4L(-3)+L(-2)\bigr).
\end{aligned}
\]
Thus, at level $n=2$, these four operators span the quadratic relations
modulo $O_2^\circ(\omega,V)$ by \eqref{eq:virasoro-quadratic-spanning-window}.
Their displayed representatives are computed from the product part of
$\widehat Y^{(2)}_{1,\omega}(x)$.
\end{Exa}

\subsection{Quadratic counts and a finite-list preview}
\label{subsec:finite-list-preview}
For each distinct pair $(s,t)$, the residues
$(s_{-r}t)R^n_{m,k}$ with $n\le m\le2n-1$, $1\le r\le2n-m$,
and $k\in\{0,n\}$ suffice \cite{Part2}. Each of the two triangular windows has
\begin{equation}\label{eq:quadratic-window-count}
\sum_{m=n}^{2n-1}(2n-m)=\binom{n+1}{2}
\end{equation}
operators. For an equal-input pair only odd $r$ are needed, giving
$\lfloor(n+1)^2/4\rfloor$ operators per window.
Each operator is applied to all $z\in V$.

At level $n=2$, a proposed list under the auxiliary-degree hypothesis
can also be written for an arbitrary finite $S$. Fix an ordering of $S$.
The additional cubic operators are
\begin{equation}\label{eq:auxiliary-cubic-list-n2}
(a_{-1}b_{-1}c)R^2_{4,k}(\cdot),\qquad a\le b\le c,\quad a,b,c\in S,
\quad k\in\{0,4\}.
\end{equation}
The proposed refinement is
\begin{equation}\label{eq:auxiliary-complete-list-n2}
O_2^\circ(V)=O_2^\circ(2,S,V)
+\Span_\C\{T(z):\ T\text{ in }\eqref{eq:auxiliary-cubic-list-n2},\ z\in V\}.
\end{equation}
There are at most $2\binom{|S|+2}{3}$ additional cubic operators.
This refinement requires cubic reductions to be given in subsequent work.

A concrete refinement of Conjecture~\ref{conj:finite-length-zhu-relation}
is available when $V$ is $\N$-graded and strongly generated by one
homogeneous vector $u$. The proposed generating list consists of
\begin{equation}\label{eq:single-generator-length-one-list}
uR^n_{m,0}(\cdot),\qquad m\ge0,
\end{equation}
together with the finite quadratic list
\begin{equation}\label{eq:single-generator-quadratic-list}
(u_{-r}u)R^n_{m,k}(\cdot),\qquad
n\le m\le2n-1,\quad 1\le r\le2n-m,\quad r\text{ odd},\quad k\in\{0,n\}.
\end{equation}
At levels $n\le3$, the proposed additional higher-length operators are
\begin{align}
n=1:&\quad\text{none},\label{eq:single-generator-higher-list-n1}\\
n=2:&\quad(u_{-1}u_{-1}u)R^2_{4,k}(\cdot),\quad k\in\{0,4\},
\label{eq:single-generator-higher-list-n2}\\
n=3:&\quad(u_{-1}u_{-1}u)R^3_{6,k}(\cdot),\quad k\in\{0,6\},\notag\\
&\quad(u_{-1}u_{-1}u_{-1}u)R^3_{9,k}(\cdot),\quad k\in\{0,9\}.
\label{eq:single-generator-higher-list-n3}
\end{align}
The proposed assertion is that the values of these operators, on all
$z\in V$, span $O_n^\circ(V)$. The quadratic reductions are given
in \cite{Part2}. The passage from the finite-length conjecture to the complete
lists, including the cubic and quartic operators above, requires the
higher-length reductions to be given in subsequent work.

\section*{Acknowledgments}
The author thanks Katrina Barron for discussions.

\section*{Code availability}
Exact-arithmetic Python scripts (standard library only) that check selected identities and examples of this paper
over finite ranges are available as ancillary files with the arXiv version; see \texttt{anc/README.txt}.
They test the change-of-generators formulas of Section~\ref{zhu-algebra-definition-section}
(Lemmas~\ref{lem:ur1-Rmk-change-of-basis} and~\ref{lem:Onu-Lminus1-inclusion} and
Corollary~\ref{lem:circle-Rk0-change-of-basis}), the series $p_{j,u}$ and their derivatives
(Lemmas~\ref{lem:p-decomposition} and~\ref{lem:p-higher-derivatives} and
Example~\ref{exa:YL-omega-n2}), the regular-part formula of Proposition~\ref{prop:AB-specialization},
the diagonal kernel of Lemma~\ref{lem:Cium-YL}, and the Virasoro lists of
Example~\ref{exa:virasoro-quadratic-window}.
The proofs do not depend on these computations.

\section*{Declaration of generative AI and AI-assisted technologies in the manuscript preparation process}
During the preparation of this work the author used ChatGPT (OpenAI) and Claude (Anthropic) in order to assist with drafting and revising the manuscript, including its mathematical exposition, and to check identities by exact computer calculation. After using these tools, the author reviewed and edited the content as needed and takes full responsibility for the content of the published article.
\begingroup
\raggedright

\endgroup
\end{document}